\documentclass[12pt,reqno]{amsart}
\usepackage{amscd,amssymb,amsmath,multicol,enumerate}
\usepackage[all]{xy}

\newtheorem{thm}[equation]{Theorem}
\numberwithin{equation}{section}
\newtheorem{cor}[equation]{Corollary}

\newtheorem{rmk}[equation]{Remark}

\newtheorem{lem}[equation]{Lemma}

\newtheorem{defin}[equation]{Definition}
\newtheorem{diag}[equation]{Diagram}
\newtheorem{prop}[equation]{Proposition}

\begin{document}
\raggedbottom \voffset=-.7truein \hoffset=0truein \vsize=8truein
\hsize=6truein \textheight=8truein \textwidth=6truein
\baselineskip=18truept
\def\vareps{\varepsilon}
\def\mapright#1{\ \smash{\mathop{\longrightarrow}\limits^{#1}}\ }
\def\mapleft#1{\ \smash{\mathop{\longleftarrow}\limits^{#1}}\ }
\def\mapup#1{\Big\uparrow\rlap{$\vcenter {\hbox {$#1$}}$}}
\def\mapdown#1{\Big\downarrow\rlap{$\vcenter {\hbox {$\ssize{#1}$}}$}}
\def\on{\operatorname}
\def\ot{\otimes}
\def\a{\alpha}
\def\bz{{\Bbb Z}}
\def\gd{\on{gd}}
\def\imm{\on{imm}}
\def\sq{\on{Sq}}
\def\g{\gamma}
\def\eps{\epsilon}
\def\tfrac{\textstyle\frac}
\def\w{\wedge}
\def\b{\beta}
\def\A{{\cal A}}
\def\P{{\cal P}}
\def\zt{{\Bbb Z}_2}
\def\bq{{\Bbb Q}}
\def\ker{\on{ker}}
\def\coker{\on{coker}}
\def\im{\on{im}}
\def\wt{\on{wt}}
\def\u{{\cal U}}
\def\e{{\cal E}}
\def\ext{\on{Ext}}
\def\wbar{{\overline w}}
\def\xbar{{\overline x}}
\def\ybar{{\overline y}}
\def\zbar{{\overline z}}
\def\ebar{{\overline e}}
\def\nbar{{\overline n}}
\def\mbar{{\overline m}}
\def\ubar{{\overline u}}
\def\ft{{\widetilde f}}
\def\pt{{\widetilde P}}
\def\Ct{L}
\def\rt{{\widetilde R}}
\def\vt{{\widetilde\nu}}
\def\bot{{\widetilde{bo_2}}}
\def\ni{\noindent}
\def\coef{\on{coef}}
\def\den{\on{den}}
\def\N{{\Bbb N}}
\def\Z{{\Bbb Z}}
\def\Q{{\Bbb Q}}
\def\R{{\Bbb R}}
\def\C{{\Bbb C}}
\def\bc{{\bold C}}
\def\bd{{\bold D}}
\def\Ah{\widehat{A}}
\def\Bh{\widehat{B}}
\def\Ch{\widehat{C}}
\def\tmf{\on{tmf}}
\def\TMF{\on{TMF}}
\def\tmfbar{\overline{\tmf}}
\def\extt{\widetilde{\ext}}
\title[Connective versions of $TMF(3)$]
{Connective versions of $TMF(3)$}

\author{Donald M. Davis}
\address{Department of Mathematics, Lehigh University\\Bethlehem, PA 18015, USA}
\email{dmd1@lehigh.edu}
\author{Mark Mahowald}
\address{Department of Mathematics, Northwestern University\\Evanston, IL 60208, USA}
\email{markmah@mac.com}
\date{May 20, 2010}

\keywords{Topological modular forms, Adams spectral sequence}
\thanks {2000 {\it Mathematics Subject Classification}:
55P42.}

\maketitle
\begin{abstract} We study three connective versions of the spectrum for topological modular forms of level 3.
All three were described briefly by Mahowald and Rezk in \cite{MR}, but we add much detail to their discussion.
Letting $\tmf(3)$ denote our connective model which is a ring spectrum, we compute $\tmf(3)_*(RP^\infty)$.
 \end{abstract}
\section{Introduction}\label{intro}
In \cite{MR}, the second author and Rezk discuss the periodic spectrum $\TMF(\Gamma_0(3))$, abbreviated here as $\TMF(3)$,
associated to topological modular forms of level 3. In Section 7 of \cite{MR}, they discuss briefly three
connective models of $\TMF(3)$. The main purpose of this paper is to clarify and fill in details for these connective models.

The first model is $X\w\tmf$, where $X$ is a certain 10-cell complex; it was first introduced by the second author and Gorbounov in their study of $MO[8]$.
It is probably the best of our three models because it is a ring spectrum. In Section \ref{GMsec},
we define it and compute its homotopy groups. Our description and method of computation differ somewhat
from that of \cite{GM}.

In \cite{MR}, another connective model for $\TMF(3)$ is discussed, which is $Z\w\tmf$, where $Z$ is a certain
8-cell complex. Although $Z\w\tmf$ is not a ring spectrum, its importance is primarily that the dimensions of the cells of $Z$ allow one to easily construct a
map $Z\to \TMF(3)$ thanks to certain homotopy groups of $\TMF(3)$ being 0. The other models are then related to
$\TMF(3)$ via the $Z$-model. In Section \ref{sec2}, we provide some additional details to the sketch given in \cite{MR}.

In Section \ref{BGsec}, we consider a third model which was also introduced in \cite{MR}. This one is closely
related to consideration of a splitting of $\tmf\w\tmf$. There is a Brown-Gitler-type splitting of the $A$-module $H^*(\tmf\w\tmf)$, and we show that
it is not realized by a spectrum splitting. Again we add some clarity and detail to the description in \cite{MR} of this model and its
homotopy groups.

All three of our models are equivalent after inversion of $v_2$, but as connective models they are different.
The homotopy groups of the second and third models are subsets of those of the first, obtained by omitting certain
initial portions. One nice feature of our approach is to relate the Ext calculations for the second and third models
directly to that of the first, even though the constructions of the spectra are very different.

In Section \ref{Psec} we compute $\pi_*(P_1\w X\w\tmf)$, where $P_1=RP^\infty$. If we think of $X\w\tmf$ as our best
model of $\tmf(3)$, then this is $\tmf(3)_*(P_1)$. Our original goal in undertaking this study was to use $\TMF(3)$ in
obstruction theory, and this computation would be a first step toward doing that.

\section{The model of Gorbounov and Mahowald}\label{GMsec}
In their study of $\pi_*(MO[8])$ in \cite{GM}, the second author and Gorbounov introduced a new spectrum,
which turns out to be the best model for a connective version of $\TMF(3)$. Certain aspects of the construction
in \cite{GM} were unclear to the first author, and so we have prepared this alternative account.
In Theorem \ref{GMthm} we define the spectrum, and in Theorems \ref{piXtmf} and \ref{v1v2thm} we determine its homotopy
groups. In Section \ref{sec2}, we will establish its relationship with $\TMF(3)$.

\begin{thm}\label{GMthm} \begin{enumerate}[(a)]
\item There is a 9-cell CW complex $Y$ with one cell of each dimension $0$, $2$, $3$, $4$, $6$, $7$, $8$, $9$, and $10$, in which the following
Steenrod operations are nonzero on the bottom class $g$:
\begin{equation}\label{Y}\sq^2,\,\sq^3,\,\sq^4,\,\sq^4\sq^2,\,\sq^5\sq^2,\,\sq^6\sq^2=\sq^8,\,\sq^6\sq^3,\,\sq^7\sq^3.\end{equation}
This together with $\sq^6g=0$ completely describes $H^*(Y)$ as an $A$-module.
\item There is a map $\Sigma^3Y\mapright{\a}S^0$ extending $2\nu$.
\item Let $X$ denote the mapping cone of $\a$. There is a map $X\mapright{f}bo$ which is the identity on the bottom cell.
\item Let $\ft$ denote the composite
$$X\w\tmf\mapright{f\w1}bo\w\tmf\mapright{\mu}bo,$$
and let $C$ denote the mapping cone of $\ft$. There is an isomorphism of $A$-modules
$$H^*(C)\approx\Sigma^4A/A(\sq^4,\sq^5\sq^1).$$
\item $X\w\tmf$ is a ring spectrum.
\end{enumerate}
\end{thm}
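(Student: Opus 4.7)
My plan is to handle the five parts in order, with each building on its predecessors. The real work lies in (a), which constructs $Y$, and in (e), the ring structure; parts (b)--(d) are largely obstruction-theoretic consequences.

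For (a), I would first verify via the Adem relations that the listed eight Steenrod operations on $g$ together with the relation $\sq^6 g = 0$ consistently describe a cyclic $A$-module of the stated cellular dimensions. The complex $Y$ is then built inductively: start with $S^0$, attach the $2$-cell by $\eta$ to produce $\sq^2 g \neq 0$, and continue attaching cells of increasing dimension by choosing attaching maps in the homotopy of the partial skeleton that realize each successive operation. The main difficulty is controlling which operations appear: the constraint $\sq^6 g = 0$ restricts the allowable attaching data of the $6$- and $8$-cells, and care is needed to ensure that operations predicted by Adem relations match the cohomology of the partial complex at each stage. A convenient shortcut, if available, would be to identify $Y$ with a subcomplex of a known spectrum (for example, a skeleton of $BO\langle 8\rangle$ or a Brown--Gitler summand), supplying both the module structure and the attaching data at once.

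For (b), I extend $2\nu \in \pi_3(S^0)$ cell by cell over $\Sigma^3 Y$; each obstruction is an element of $\pi_*(S^0)$ obtained by composing $2\nu$ with an attaching map of $Y$, and all must vanish using standard relations such as $2\nu^2 = 0$. For (c), the same strategy extends the unit $S^0 \to bo$ over $X$. The first new obstruction, $2\nu$, vanishes in $\pi_3(bo)$ because $\nu = 0$ there; subsequent obstructions lie in the completely known homotopy of $bo$. For (d), apply mod-$2$ cohomology to the cofibration $X\w\tmf \mapright{\ft} bo \to C$, using the standard presentations $H^*(bo) \cong A/A(\sq^1,\sq^2)$ and $H^*(\tmf) \cong A/A(\sq^1,\sq^2,\sq^4)$ along with the $H^*(X)$ obtained from (c). Determining the image of $\ft^*$ degreewise then identifies $H^*(C) \cong \Sigma^4 A/A(\sq^4,\sq^5\sq^1)$.

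The hardest step is (e). Because $X$ is a finite CW spectrum that need not itself admit a ring structure, the multiplication on $X\w\tmf$ must come from its interaction with $\tmf$. The natural direct approach is to build a homotopy-associative map $m : X \w X \to X$ extending the fold of the bottom cells; the obstructions lie in groups $[X \w X / S^0, X]$, which are analyzable via the cell structure of $X$. Smashing $m$ with the multiplication on $\tmf$ then yields the ring structure on $X\w\tmf$. An indirect alternative, and plausibly the route taken in \cite{GM}, is to realize $X\w\tmf$ as a retract of $\tmf\w\tmf$ (or of $\tmf\w\TMF(3)$) via an explicit idempotent, thereby inheriting the multiplication from the $E_\infty$ structure of $\tmf$. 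Either way, the main obstacle is assembling enough multiplicative coherence---a unit, a multiplication, and at minimum homotopy associativity---from data that is naturally only defined on the $\tmf$-smashed object.
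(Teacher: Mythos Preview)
Your outline for parts (b)--(d) is broadly compatible with the paper's, though the paper works dually throughout: rather than extending cell by cell, it computes $[\Sigma^3 Y, S^0]$ and $[X, bo]$ as $\pi_*(DY)$ and $\pi_*(DX\wedge bo)$ via the Adams spectral sequence, using Bruner's software to read off the relevant Ext charts. For (d) the paper does not compute $\ft^*$ directly but instead passes to the quotient map $\widehat f:(X/S^0)\wedge\tmf\to bo/\tmf$, observes it has degree~2 on the bottom cell and hence is zero in mod~2 cohomology, and deduces a short exact sequence identifying $H^*(C)$ as a cyclic $A_2$-module.

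Part (a) in the paper is \emph{not} cell-by-cell. The complex $Y$ is constructed explicitly as the fiber of a map
\[
X_3\wedge X_7 \longrightarrow Q \longrightarrow S^6\cup_2 e^7\cup_\eta e^9,
\]
where $X_3=S^0\cup_\eta e^2\cup_2 e^3$, $X_7=S^0\cup_\nu e^4\cup_\eta e^6\cup_2 e^7$, and $Q$ is the quotient of $X_3\wedge X_7$ by its 4-skeleton. The Steenrod operations then fall out of the Cartan formula, and the relation $\sq^6 g=0$ is visible because $\sq^6\iota_0=(2,4)+(0,6)$ in $X_3\wedge X_7$ is pulled back from the target and hence dies in the fiber. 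Your inductive attachment scheme is not wrong in principle, but you give no mechanism for forcing $\sq^6 g=0$ while keeping $\sq^4\sq^2 g\ne 0$; this is exactly the delicate point the explicit construction handles.

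The real gap is in (e). Neither of your proposed routes matches the paper, and both have problems. Your option (1), a map $m:X\wedge X\to X$, would require controlling obstructions in $\pi_*(X)$ for a 10-cell complex whose homotopy you have not computed; the paper never claims such a map exists. Your option (2), realizing $X\wedge\tmf$ as a retract of $\tmf\wedge\tmf$, is undercut by the paper's own Theorem~\ref{nosplit}, which shows the natural Brown--Gitler-type splitting of $\tmf\wedge\tmf$ fails. The paper's actual argument is the following trick, which uses part (d) essentially: first build $m':X\wedge X\to bo$ extending the bottom cell (this is an ASS computation in $\ext_{A_1}(H^*(DX\wedge DX))$, where the target $bo$ makes the obstruction groups tractable), then show the composite $X\wedge X\xrightarrow{m'} bo\to C$ is null by proving $\pi_0(DX\wedge DX\wedge C)=0$ (another Ext computation, using $H^*(C)\cong\Sigma^4 A/(\sq^4,\sq^{5,1})$ from (d)). The cofiber sequence defining $C$ then lifts $m'$ to $m:X\wedge X\to X\wedge\tmf$, and smashing twice with $\tmf$ and multiplying gives the product. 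The key idea you are missing is that the multiplication is built with target $X\wedge\tmf$, not $X$, and that the cofibration of part (d) is precisely what makes the lifting possible.
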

\begin{rmk}{\rm This $X\w\tmf$ will be  our preferred model for the connective $\tmf(3)$, because it is a ring spectrum. The spectrum $\Sigma^{16}X\w \tmf$ is apparently a
subspace of $MO\langle8\rangle/\tmf $,
but this will not enter into our argument. This was the motivation for the initial
discussion of $X\w\tmf$ in \cite{GM}.}\end{rmk}

Throughout the paper, $A_n$  denotes the subalgebra of the mod 2 Steenrod algebra $A$ generated by $\sq^i$ for $i\le 2^n$.
Also $\eta$ and $\nu$ denote the (class of the) Hopf maps in the 1- and 3-stems. All cohomology groups have coefficients in $\zt=\bz/2$.
Our spectra are localized at 2.

\begin{proof} (a.) Let $X_3=S^0\cup_\eta e^2\cup_2e^3$ and $X_7=S^0\cup_\nu e^4\cup_\eta e^6\cup_2 e^7$. Let $Q$ denote the quotient
of $X_3\w X_7$ by its 4-skeleton. The Steenrod algebra structure, or equivalently the cell structure, of $Q$ is depicted in Diagram
\ref{Zdiag}. Here a symbol $(i,j)$ is the product class or cell of an $i$-cell of $X_3$ and a $j$-cell of $X_7$. We indicate both $\sq^1$ and $\sq^2$
by straight lines, and $\sq^4$ by a curved line.
\bigskip

\begin{minipage}{6.5in}
\begin{diag}\label{Zdiag}{Cell structure of quotient of $X_3\w X_7$}
\begin{center}
\begin{picture}(430,80)
\def\mp{\multiput}
\put(80,20){$(0,6)$}
\put(130,20){$(0,7)$}
\put(230,20){$(2,7)$}
\put(280,40){$(3,7)$}
\put(80,60){$(2,4)$}
\put(130,60){$(3,4)$}
\put(180,40){$(2,6)$}
\put(230,60){$(3,6)$}
\mp(110,24)(0,40){2}{\line(1,0){15}}
\mp(160,24)(0,40){2}{\line(1,0){65}}
\put(204,50){\line(2,1){20}}
\put(204,39){\line(2,-1){20}}
\put(254,60){\line(2,-1){20}}
\put(254,29){\line(2,1){20}}
\put(103,60){\line(6,-1){72}}
\put(210,44){\line(1,0){65}}
\put(103,29){\line(6,1){72}}
\put(190,70){\oval(200,10)[t]}
\put(290,70){\line(0,-1){18}}
\put(190,18){\oval(200,10)[b]}
\put(290,18){\line(0,1){18}}
\end{picture}
\end{center}
\end{diag}
\end{minipage}
\medskip

There is a map $g$ from this $Q$ to $S^6\cup_2 e^7\cup_\eta e^9$ which sends the cells $(2,4)$, $(3,4)$, and $(3,6)$ by
degree 1,
and  the cells $(0,6)$, $(0,7)$, and $(2,7)$ by degree $-1$. The  fiber of the composite
$$X_3\w X_7\mapright{\text{coll}}Q\mapright{g} S^6\cup_2 e^7\cup_\eta e^9$$
is the desired complex $Y$. The Steenrod operations in $Y$ can be determined from the Cartan formula together with the
fact that $\sq^2$ and $\sq^3$ are nonzero in $X_3$, and $\sq^4$, $\sq^6$, and $\sq^7$ are nonzero in $X_7$. For example,
$\sq^4\sq^2$ on the bottom class is $(2,4)$, while $\sq^6$ is $(2,4)+(0,6)$, which is $g^*(x_6)$ and hence is 0 in the fiber.

(b.) Let $DY$ denote the $S$-dual of $Y$, with cells of dimensions the negative of those of $Y$. Thus the top cell of $DY$ has dimension 0.
Note that $\sq^8=0$ in $H^*(DY)$, since it is dual  to $\chi\sq^8$, which is 0 in $H^*(Y)$.
Let $(DY)^{(-1)}$ denote the $(-1)$-skeleton of $DY$. We use the ASS to show that
$2\nu$ is in the image of $\pi_{3}(DY)\mapright{c_*}\pi_3(S^0)$, where $c$ collapses $(DY)^{(-1)}$. We use Bruner's software (\cite{Br})
to compute $\ext_A^{s,t}(H^*(DY))$ for $2\le t-s\le 4$ as in Diagram \ref{extdiag}. Here and throughout, we omit
writing $\zt$ as the second argument of our Ext groups.

\medskip

\begin{minipage}{6.5in}
\begin{diag}\label{extdiag}{Ext groups for $2\le t-s\le 4$}
\begin{center}
\begin{picture}(430,180)(20,0)
\def\mp{\multiput}
\def\elt{\circle*{2.5}}
\put(35,150){$\ext_A(H^*((DY)^{(-1)}))$}
\mp(149,151)(106,0){2}{$\longrightarrow$}
\put(375,151){$\mapright{\delta}$}
\put(176,150){$\ext_A(H^*DY)$}
\put(296,150){$\ext_A(H^*S^0)$}
\mp(50,10)(120,0){3}{\line(1,0){60}}
\mp(80,30)(120,0){2}{\line(0,1){100}}
\mp(80,30)(0,20){6}{\elt}
\mp(200,30)(0,20){6}{\elt}
\mp(80,30)(120,0){2}{\line(1,1){20}}
\mp(100,50)(120,0){2}{\elt}
\mp(103,30)(0,20){3}{\elt}
\put(203,50){\line(0,1){40}}
\mp(203,50)(0,20){3}{\elt}
\mp(223,30)(0,20){3}{\elt}
\put(203,50){\line(1,1){20}}
\put(83,90){\elt}
\put(60,50){\elt}
\put(46,45){$B'$}
\put(320,30){\line(0,1){40}}
\mp(320,30)(0,20){3}{\elt}
\put(309,26){$B$}
\put(309,46){$A$}
\put(300,50){\elt}
\put(300,50){\line(1,1){20}}
\mp(56,0)(120,0){3}{$2$}
\mp(76,0)(120,0){3}{$3$}
\mp(96,0)(120,0){3}{$4$}
\put(203,50){\circle{5}}
\end{picture}
\end{center}
\end{diag}
\end{minipage}
\medskip

The desired class $2\nu$ is indicated with $A$ in the diagram, and is the image of the circled
class. The class $\nu$, indicated by $B$, maps to $B'$ in the exact sequence.

(c.) Let $DX$ denote the $S$-dual of $X$, with 10 cells, in dimensions $-14$ up to 0. Then $[\Sigma^iX,bo]\approx
\pi_i(DX\w bo)$, and this can be computed by the ASS with $E_2=\ext_{A_1}(H^*DX)$. The $A_1$-structure of $H^*(DX)$
is easily seen, and the $\ext_{A_1}$-calculation easily made, giving the result in Diagram \ref{extA1diag}
in dimension $<4$. There are clearly no possible differentials, and our desired map is detected in filtration 0
by the circled element.

\medskip
\begin{minipage}{6.5in}
\begin{diag}\label{extA1diag}{$\ext_{A_1}(H^*DX)$ in $t-s<4$}
\begin{center}
\begin{picture}(430,160)
\def\mp{\multiput}
\def\elt{\circle*{2.5}}
\put(40,15){\line(1,0){300}}
\put(60,15){\elt}
\put(50,0){$-14$}
\put(90,30){\vector(0,1){120}}
\mp(150,15)(120,0){2}{\vector(0,1){135}}
\mp(150,15)(120,0){2}{\line(1,1){30}}
\put(142,0){$-8$}
\put(165,15){\line(1,1){15}}
\put(153,45){\vector(0,1){105}}
\put(153,45){\line(1,1){28}}
\put(208,45){\vector(0,1){105}}
\put(211,60){\vector(0,1){90}}
\put(214,90){\vector(0,1){60}}
\put(273,60){\vector(0,1){90}}
\put(273,60){\line(1,1){30}}
\put(276,75){\vector(0,1){75}}
\put(276,75){\line(1,1){28}}
\put(279,105){\vector(0,1){45}}
\put(279,105){\line(1,1){26}}
\put(268,0){$0$}
\put(270,15){\circle{5}}
\end{picture}
\end{center}
\end{diag}
\end{minipage}
\medskip

(d.) There is a commutative diagram in which horizontal and vertical sequences are fiber sequences.
$$\begin{CD} S^0\w\tmf @>=>> \tmf @. @.\\
@VVV @VVV @. @.\\
X\w\tmf @>\ft>> bo @>>> C @.\\
@VVV @VVV @V=VV @.\\
(X/S^0)\w\tmf@>\widehat f>> bo/\tmf @>>> C@>>> \Sigma(X/S^0)\w\tmf
\end{CD}$$

 The restriction of $\ft$ to the 4-skeleton is $S^0\cup_{2\nu}e^4\to S^0\cup_{\nu}e^4$ of degree 1 on the
 bottom cell. Thus $\widehat f$ has degree 2 on its bottom 4-cell.
 The $A$-module $H^*(bo/\tmf)$ is isomorphic to $A\otimes_{A_2}\overline{A_2/\!/A_1}$, and the $A_2$-module
$\overline{A_2/\!/A_1}$ has basis
\begin{equation}\label{C}\{g_4,\,\sq^2g_4,\,\sq^3g_4,\,\sq^4\sq^2g_4,\,\sq^4\sq^3g_4,\,\sq^6\sq^3g_4,\,\sq^4\sq^6\sq^3g_4\}.
\end{equation}
Thus $(\widehat f)^*=0$, and, since $X/S^0=\Sigma^4Y$, there is a short exact sequence of $A$-modules
$$0\to H^*(\Sigma^5Y\w\tmf)\to H^*(C)\to A\otimes_{A_2}\overline{A_2/\!/A_1}\to0,$$
and $\sq^1g_4\ne0$ in $H^*C$. We conclude that $H^*(C)$ is an extended cyclic $A_2$-module on a 4-dimensional generator,
with nonzero operations being those in (\ref{C}) and $\sq^1$ and the operations listed in (\ref{Y}) applied to
$\sq^1$. One easily checks that this $A_2$-module equals $A_2/(\sq^4,\sq^5\sq^1)$, and so the $A$-module $H^*(C)$
is as claimed.

(e.) We will prove there is a map $m':X\w X\to bo$ extending the inclusion of the bottom cell and that when followed by the map
$bo\to C$ of part (d), the composite is trivial. Thus by the definition of $C$, $m'$ factors through a map
$m:X\w X\to X\w\tmf$ extending the inclusion of the bottom cell. Smashing this twice with $\tmf$ and following by two multiplications
of $\tmf$ yield the desired product on $X\w\tmf$.

We construct the dual of $m'$, an element of $\pi_0(DX\w DX\w bo)$. The $E_2$-term of the ASS converging to $\pi_*(DX\w DX\w bo)$
is $\ext_{A_1}(H^*(DX\w DX))$. The $A_1$-structure of $H^*(DX)$ is easily seen and $\ext_{A_1}$ of tensor products of the summands
is easily computed, as, for example, in \cite{thesis}. We obtain that in the vicinity of $t-s=0$, the chart has a copy of $bo_*$
beginning in position (0,0) and 15 additional copies of $bo_*$ beginning in positions $(0,s)$ for $3\le s\le 12$. The groups in
$t-s=-1$, i.e. corresponding to $\pi_{-1}$, are all 0. Thus there are no possible differentials from $t-s=0$ in the ASS, and we deduce the existence
of our map $S^0\to DX\w DX\w bo$, whose dual is $m'$.

Next we compute the ASS for $\pi_*(DX\w DX\w C)$. Let $Y$ be as in part (a). Then $DX=\Sigma^{-4}DY\cup_{2\nu}e^0$, and
so $H^*(DX)\approx H^*(\Sigma^{-4}DY)\oplus H^*(S^0)$ as $A$-modules. Thus the ASS converging to $\pi_*(DX\w DX\w C)$ has
\begin{eqnarray*}E_2&\approx& \ext_A(H^*(\Sigma^{-4}DY\w DY)\otimes A/(\sq^4,\sq^{5,1}))\oplus \ext_A(H^*(DY)\otimes A/(\sq^4,\sq^{5,1}))\\
&&\oplus \ext_A(H^*(DY)\otimes A/(\sq^4,\sq^{5,1}))\oplus \ext_A(\Sigma^4 A/(\sq^4,\sq^{5,1})).\end{eqnarray*}
Note that the bottom class of $DY$ is in grading $-10$.
We can use Bruner's software to see that each of these Ext groups is 0 in $t-s=0$. For example,
$$\ext_A(H^*(\Sigma^{-4}DY\w DY)\otimes A/(\sq^4,\sq^{5,1}))$$ has 15 $\bz$-towers in the $(-3)$-stem, beginning in filtrations 2 through 8.
It is 0 in stems $-2$, $-1$, and 0, and then in the 1-stem has 21 $\bz$-towers, on each of which $\eta$ and $\eta^2$ are nonzero.

Thus $\pi_0(DX\w DX\w C)=0$ and hence $[X\w X,C]=0$. Therefore the map $X\w X\mapright{m'}bo\to C$ is trivial, implying the result
by the argument of the first paragraph of the proof.
\end{proof}

The main step toward describing $\pi_*(X\w\tmf)$ is, because of \ref{GMthm}(d), the Ext calculation in Theorem \ref{extCthm}.
This calculation was first made in \cite{GM}, but our approach will be somewhat different. Our approach will be useful in
performing other related Ext calculations. The description is in terms of $bo_*$ and $bsp_*$, which are depicted in Diagram \ref{bodiag}.

\bigskip

\begin{minipage}{6.5in}
\begin{diag}\label{bodiag}{$bo_*$ and $bsp_*$}
\begin{center}
\begin{picture}(400,135)(30,0)
\def\mp{\multiput}
\def\elt{\circle*{1.75}}
\mp(-5,10)(200,0){2}{\line(1,0){140}}
\mp(-2,0)(200,0){2}{$0$}
\mp(38,0)(200,0){2}{$4$}
\mp(78,0)(200,0){2}{$8$}
\mp(116,0)(200,0){2}{$12$}
\mp(0,10)(200,0){2}{\vector(0,1){100}}
\mp(0,10)(80,40){2}{\line(1,1){20}}
\mp(0,10)(10,10){3}{\elt}
\mp(80,50)(10,10){3}{\elt}
\put(40,40){\vector(0,1){70}}
\put(80,50){\vector(0,1){60}}
\put(120,80){\vector(0,1){30}}
\put(240,20){\vector(0,1){90}}
\mp(240,20)(80,40){2}{\line(1,1){20}}
\mp(240,20)(10,10){3}{\elt}
\mp(320,60)(10,10){3}{\elt}
\put(280,50){\vector(0,1){60}}
\put(320,60){\vector(0,1){50}}
\put(40,120){$bo_*$}
\put(240,120){$bsp_*$}
\mp(130,90)(5,3){3}{$\cdot$}
\mp(360,90)(5,3){3}{$\cdot$}
\end{picture}
\end{center}
\end{diag}
\end{minipage}
\medskip

We will denote by  $a_{x,y}$ an element of $\ext^{y,x+y}$. This corresponds to the usual $(x,y)$ components in an ASS.
There are standard elements $h_1$, $h_2$, and $v_2^4$ of $(x,y)$-grading $(1,1)$, $(3,1)$, and $(24,4)$, respectively.
Here and throughout, $R[a]\langle b_1,\ldots,b_r\rangle$ denotes a free module over a polynomial algebra $R[a]$ with basis
$\{b_1,\ldots,b_r\}$.
\begin{thm}\label{extCthm} As a bigraded abelian group, $\ext^{*,*}_A(A/A(\sq^4,\sq^5\sq^1),\zt)$ is isomorphic to
\begin{eqnarray*}&&\zt[v_2^8]\langle a_{0,0},\, h_2a_{0,0},\, a_{14,2},\,h_1a_{14,2},\,h_2a_{14,2},\,a_{31,5},\,
h_2a_{31,5},\,a_{39,7}\rangle\\
&\oplus&\ker(bo_*[v_2^4]\langle a_{5,1},a_{21,3}\rangle\to \zt[v_2^8]\langle a_{21,3}\rangle)\\
&\oplus &bsp_*[v_2^4]\langle a_{9,2},a_{17,4}\rangle.\end{eqnarray*}
\end{thm}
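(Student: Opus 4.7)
The plan is to reduce to an Ext computation over $A_2$ via change of rings, and then to compute the resulting $A_2$-Ext by means of a short exact sequence whose outer terms are built from $A_1$-induced pieces.

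Because $A$ is free as a right $A_2$-module, the quotient $A/A(\sq^4,\sq^5\sq^1)$ is isomorphic as a left $A$-module to $A\otimes_{A_2}N$, where $N:=A_2/A_2(\sq^4,\sq^5\sq^1)$. Change of rings then gives
$$\ext_A^{*,*}(A/A(\sq^4,\sq^5\sq^1),\zt)\ \approx\ \ext_{A_2}^{*,*}(N,\zt).$$
Next I would draw the $A_2$-cell diagram of the finite-dimensional module $N$ and exhibit a short exact sequence
$$0\to N'\to N\to N''\to 0$$
of $A_2$-modules whose outer terms decompose (up to suspension) as sums of modules of the form $A_2\otimes_{A_1}M$ for $A_1$-modules $M$ whose Ext is known. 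Change of rings again gives $\ext_{A_2}(A_2\otimes_{A_1}M,\zt)\approx\ext_{A_1}(M,\zt)$; for $M=\zt$ this is $bo_*$, and for the appropriate ``joker''-type $A_1$-module it is $bsp_*$. The three summands of the theorem arise in this way: pieces induced from $\zt$ contribute the $bo_*[v_2^4]$-type summand, pieces induced from the $bsp$-type $A_1$-module contribute the $bsp_*[v_2^4]$ summand, and what survives stably periodic under the action of $v_2^8\in\ext_{A_2}^{8,56}(\zt,\zt)$ produces the first $\zt[v_2^8]$-summand with its eight generators $a_{0,0},\ldots,a_{39,7}$. The $v_2^4$- and $v_2^8$-periodicity factors are then inherited from the natural action of $\ext_{A_2}^{*,*}(\zt,\zt)$ on everything in sight.

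Running the long exact sequence of $\ext_{A_2}$ then assembles the three pieces. The principal obstacle I expect is identifying the connecting homomorphism
$$\ext_{A_2}^{*,*}(N'',\zt)\to\ext_{A_2}^{*+1,*}(N',\zt),$$
because its image is exactly what cuts the middle $bo_*[v_2^4]\langle a_{5,1},a_{21,3}\rangle$ summand down to the indicated kernel of the map into $\zt[v_2^8]\langle a_{21,3}\rangle$. Once this is pinned down, a final bookkeeping step checks that no hidden bigraded-abelian-group extensions occur between the three surviving subquotients, so the final answer splits as the displayed direct sum. This structural, change-of-rings approach is lighter than the minimal-resolution computation used in \cite{GM}, and, as the introduction advertises, it is the version that will transplant cleanly to the related Ext calculations for the $Z$- and Brown-Gitler models in Sections \ref{sec2} and \ref{BGsec}.
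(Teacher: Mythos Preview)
Your first step—reducing to $\ext_{A_2}(N,\zt)$ with $N=A_2/A_2(\sq^4,\sq^5\sq^1)$ by change of rings—is exactly what the paper does. After that, however, the paper does not use a short exact sequence. It writes down an explicit eight-term exact sequence of $A_2$-modules
\[
0\leftarrow N\leftarrow C_0\leftarrow C_1\leftarrow\cdots\leftarrow C_7\leftarrow \Sigma^{56}N\leftarrow 0,
\]
with the $C_i$ built from $A_2$, $A_2/\!/A_1$, $A_2/(\sq^1,\sq^5)$, $A_2/(\sq^3)$, and one two-generator quotient, and with all differentials given explicitly. This sequence is \emph{periodic} of length~$8$ with internal shift~$56$, and the associated spectral sequence with $E_1=\bigoplus_i\phi^i\ext_{A_2}(\Sigma^{-i}C_i)$ collapses by $h_i$-naturality. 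The $\zt[v_2^8]$ factor in the answer comes directly from this $8$-periodicity; the $[v_2^4]$ factors on the $bo_*$ and $bsp_*$ pieces reflect that $A_2/\!/A_1$ and the $bsp_*$-producing modules each appear twice per period, four resolution-steps apart; and the eight isolated $\zt$ generators come from the free $A_2$ summands and from the $\zt$-part of $\ext_{A_2}(A_2/(\sq^3))$ scattered through $C_0,\ldots,C_7$.

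A single short exact sequence $0\to N'\to N\to N''\to 0$ with $A_1$-induced outer terms cannot reproduce this. The module $N$ is only $16$-dimensional, so $N'$ and $N''$ together contain at most a couple of pieces of the form $A_2\otimes_{A_1}M$, and each such piece contributes one copy of $\ext_{A_1}(M)$—a single $bo_*$ or $bsp_*$, not $bo_*[v_2^4]$ or $bsp_*[v_2^4]$. Your appeal to the action of $\ext_{A_2}(\zt,\zt)$ does not repair this: multiplication by $v_2^4$ on $\ext_{A_2}(A_2/\!/A_1)\cong bo_*$ is a degree-shifting map inside that one copy of $bo_*$; it does not manufacture the infinitely many new $bo_*$ summands that $bo_*[v_2^4]\langle a_{5,1},a_{21,3}\rangle$ requires. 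Nor can the eight $\zt[v_2^8]$ generators come from $A_1$-induced pieces—in the paper they arise from the free $A_2$ summands of the resolution. You also never exhibit your short exact sequence, and the natural candidate (from the cofiber description of $H^*(C)$ in Theorem~\ref{GMthm}(d)) has neither outer term $A_1$-free. Finally, it is precisely the paper's eight-periodic resolution, not any short exact sequence, that gets reused verbatim in Sections~\ref{sec2} and~\ref{BGsec} for the $Z$- and Brown--Gitler models.
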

\begin{proof} By the Change-of-Rings Theorem, it is equivalent to compute $$\ext_{A_2}(A_2/A_2(\sq^4,\sq^5\sq^1),\zt).$$
One can verify that there is an exact sequence of $A_2$-modules:
\begin{eqnarray*} 0&\leftarrow& A_2/(\sq^4,\sq^{5,1})\mapleft{d_0} A_2\mapleft{d_1} \Sigma^4A_2\oplus\Sigma^6A_2/\!/A_1\mapleft{d_2}
\Sigma^{11}A_2/(\sq^1,\sq^5)\oplus\Sigma^{16}A_2\\
&\mapleft{d_3}&\Sigma^{18}A_2/(\sq^3)\oplus\Sigma^{20}A_2\mapleft{d_4}
(\Sigma^{25}A_2\oplus\Sigma^{26}A_2)/(\sq^1I_{25},\sq^3I_{25}+\sq^2I_{26})\\
&\mapleft{d_5}&\Sigma^{34}A_2/\!/A_1\oplus\Sigma^{36}A_2/(\sq^3)
\mapleft{d_6}\Sigma^{40}A_2\\
&\mapleft{d_7}&\Sigma^{46}A_2/(\sq^3)\oplus \Sigma^{52}A_2/\!/A_1\mapleft{d_8}\Sigma^{56}A_2/(\sq^4,\sq^{5,1})\leftarrow0
\end{eqnarray*}
with
\begin{eqnarray*}d_1(I_4)&=&\sq^4\\
d_1(I_6)&=&\sq^5\sq^1\\
d_2(I_{11})&=&\sq^7I_4\\
d_2(I_{16})&=&(\sq^{6,6}+\sq^{7,5})I_4+\sq^{4,6}I_6\\
d_3(I_{18})&=&\sq^2I_{16}+\sq^7I_{11}\\
d_3(I_{20})&=&\sq^4I_{16}+\sq^{6,3}I_{11}\\
d_4(I_{25})&=&\sq^7I_{18}+\sq^5I_{20}\\
d_4(I_{26})&=&\sq^{7,1}I_{18}+\sq^6I_{20}\\
d_5(I_{34})&=&\sq^{2,7}I_{25}\\
d_5(I_{36})&=&(\sq^{5,6}+\sq^{6,5})I_{25}+\sq^{4,6}I_{26}\\
d_6(I_{40})&=&\sq^4I_{36}+\sq^6I_{34}\\
d_7(I_{46})&=&\sq^6I_{40}\\
d_7(I_{52})&=&\sq^{7,5}I_{40}\\
d_8(I_{56})&=&\sq^4I_{52}+(\sq^{4,6}+\sq^{6,3,1})I_{46}.
\end{eqnarray*}

For $0\le i\le7$, let $C_i$ denote the $A_2$-module which is the domain of $d_i$. Because the domain of $d_8$ is $\Sigma^{56}$
of the beginning module, the exact sequence could be continued periodically with the $\Sigma^{56}A_2/(\sq^4,\sq^{5,1})$
removed, and $C_{i+8}\approx\Sigma^{56}C_i$. There is a spectral sequence building $\ext(A_2/(\sq^4,\sq^{5,1}))$
from $\bigoplus\limits_{i\ge0}\phi^i\ext(\Sigma^{-i}C_i)$, where $\phi^i$ increases filtration by $i$. Of the modules
that appear in $C_i$, $\ext(A_2)$ is just $\zt$ in $(0,0)$, $\ext(A_2/\!/A_1)$ is $bo_*$, $\ext(A_2/(\sq^1,\sq^5))$
is $bsp_*$, $\ext(A_2/(\sq^3))$ is $\ext(A_2)\oplus\phi\ext(\Sigma^2bsp_*)$, and $\ext((A_2\oplus\Sigma^1A_2)/(\sq^1I_0,
\sq^3I_0+\sq^2I_1))$ is $\phi^{-1}(\ker(bo_*\to\zt))$. When these are put together, one obtains exactly the claim
of the theorem. There can be no differentials because differentials are $h_i$-natural. The differentials would go
from position $(x,y)$ of $\phi^i\ext(\Sigma^{-i}C_i)$ to position $(x-1,y+1)$ of $\phi^{i+r}\ext(\Sigma^{-(i+r)}C_{i+r})$. In Diagram \ref{bigdiag},
we depict this chart for $x\le48$,  to show  the impossibility of differentials in both this SS converging to Ext,
and in an ASS to be considered later. Note that the $\zt$ in the 48-stem is $v_2^8$ times the initial $\zt$.
\end{proof}

\medskip
\begin{minipage}{6.5in}
\begin{diag}\label{bigdiag}{$\ext_A(A/(\sq^4,\sq^{5,1}))$ through degree $48$}
\begin{center}
\begin{picture}(400,510)(30,0)
\def\mp{\multiput}
\def\elt{\circle*{1.75}}
\put(-5,315){\line(1,0){400}}
\put(-2,302){$0$}
\put(-40,310){$s=0$}
\put(73,302){$5$}
\put(135,302){$9$}
\put(192,302){$13$}
\put(253,302){$17$}
\put(315,302){$21$}
\put(375,302){$25$}
\mp(0,315)(45,15){2}{\elt}
\put(0,315){\line(3,1){45}}
\put(75,330){\vector(0,1){180}}
\put(75,330){\line(1,1){30}}
\mp(75,330)(15,15){3}{\elt}
\put(134,345){\vector(0,1){165}}
\put(137,375){\vector(0,1){135}}
\put(194,360){\vector(0,1){150}}
\put(194,360){\line(1,1){31}}
\mp(194,360)(15.5,15.5){3}{\elt}
\put(197,390){\vector(0,1){120}}
\put(197,390){\line(1,1){28}}
\mp(197,390)(14,14){3}{\elt}
\mp(210,345)(15,15){2}{\elt}
\put(210,345){\line(1,1){15}}
\put(210,345){\line(3,1){45}}
\put(255,360){\elt}
\put(252,375){\vector(0,1){135}}
\put(255,405){\vector(0,1){105}}
\put(258,435){\vector(0,1){75}}
\put(309 ,375){\vector(0,1){135}}
\put(312,390){\vector(0,1){120}}
\put(312,390){\line(1,1){32}}
\mp(312,390)(16,16){3}{\elt}
\put(315,420){\vector(0,1){90}}
\put(315,420){\line(1,1){30}}
\mp(315,420)(15,15){3}{\elt}
\put(318,450){\vector(0,1){60}}
\put(318,450){\line(1,1){28}}
\mp(318,450)(14,14){3}{\elt}
\mp(330,375)(15,15){2}{\elt}
\put(330,375){\line(1,1){15}}
\put(372,405){\vector(0,1){105}}
\put(375,435){\vector(0,1){75}}
\put(378,465){\vector(0,1){45}}
\put(381,495){\vector(0,1){15}}
\put(-5,0){\line(1,0){400}}
\put(-6,5){$29$}
\put(75,5){$33$}
\put(154,5){$37$}
\put(234,5){$41$}
\put(313,5){$45$}
\put(-40,10){$s=5$}
\put(-5,15){\vector(0,1){270}}
\put(-5,15){\line(4,3){44}}
\mp(-5,15)(22,16.5){3}{\elt}
\put(-2,45){\vector(0,1){240}}
\put(-2,45){\line(4,3){42}}
\mp(-2,45)(21,15.75){3}{\elt}
\put(1,75){\vector(0,1){210}}
\put(1,75){\line(4,3){40}}
\mp(1,75)(20,15){3}{\elt}
\put(4,105){\vector(0,1){180}}
\put(4,105){\line(4,3){38}}
\mp(4,105)(19,14.25){3}{\elt}
\put(7,135){\vector(0,1){150}}
\put(7,135){\line(4,3){36}}
\mp(7,135)(18,13.5){3}{\elt}
\mp(40,15)(60,15){2}{\elt}
\put(40,15){\line(4,1){60}}
\put(77,30){\vector(0,1){255}}
\put(80,60){\vector(0,1){225}}
\put(83,90){\vector(0,1){195}}
\put(86,120){\vector(0,1){165}}
\put(89,150){\vector(0,1){135}}
\put(92,180){\vector(0,1){105}}
\put(154,45){\vector(0,1){240}}
\put(154,45){\line(4,3){44}}
\mp(154,45)(22,16.5){3}{\elt}
\put(157,75){\vector(0,1){210}}
\put(157,75){\line(4,3){42}}
\mp(157,75)(21,15.75){3}{\elt}
\put(160,105){\vector(0,1){180}}
\put(160,105){\line(4,3){40}}
\mp(160,105)(20,15){3}{\elt}
\put(163,135){\vector(0,1){150}}
\put(163,135){\line(4,3){38}}
\mp(163,135)(19,14.25){3}{\elt}
\put(166,165){\vector(0,1){120}}
\put(166,165){\line(4,3){36}}
\mp(166,165)(18,13.5){3}{\elt}
\put(169,195){\vector(0,1){90}}
\put(169,195){\line(4,3){34}}
\mp(169,195)(17,12.75){3}{\elt}
\put(200,45){\elt}
\put(231,60){\vector(0,1){225}}
\put(234,90){\vector(0,1){195}}
\put(237,120){\vector(0,1){165}}
\put(240,150){\vector(0,1){135}}
\put(243,180){\vector(0,1){105}}
\put(246,210){\vector(0,1){75}}
\put(249,240){\vector(0,1){45}}
\put(311,45){\vector(0,1){240}}
\put(311,45){\line(4,3){46}}
\mp(311,45)(23,17.25){3}{\elt}
\put(314,75){\vector(0,1){210}}
\put(314,75){\line(4,3){44}}
\mp(314,75)(22,16.5){3}{\elt}
\put(317,105){\vector(0,1){180}}
\put(317,105){\line(4,3){42}}
\mp(317,105)(21,15.75){3}{\elt}
\put(320,135){\vector(0,1){150}}
\put(320,135){\line(4,3){40}}
\mp(320,135)(20,15){3}{\elt}
\put(323,165){\vector(0,1){120}}
\put(323,165){\line(4,3){38}}
\mp(323,165)(19,14.25){3}{\elt}
\put(326,195){\vector(0,1){90}}
\put(326,195){\line(4,3){36}}
\mp(326,195)(18,13.5){3}{\elt}
\put(329,225){\vector(0,1){60}}
\put(329,225){\line(4,3){34}}
\mp(329,225)(17,12.75){3}{\elt}
\put(332,255){\vector(0,1){30}}
\put(332,255){\line(4,3){32}}
\mp(332,255)(16,12){3}{\elt}
\put(380,60){\elt}
\end{picture}
\end{center}
\end{diag}
\end{minipage}
\medskip

The following result is an easy consequence of Theorems \ref{GMthm} and \ref{extCthm}.
\begin{thm}\label{piXtmf} There is an isomorphism of graded abelian groups
\begin{eqnarray*}\pi_*(X\w \tmf)&\approx&bo_*[v_2^4]\langle  v_1v_2\rangle\oplus\ker(bo_*[v_2^4]\to\zt[v_2^8]\langle
v_2^4\rangle)\\
&&\oplus bsp_*[v_2^4]\langle 2v_2^2,2v_1v_2^3\rangle\\
&&\oplus\zt[v_2^8]\langle \nu,\,\nu^2,\,x,\,\eta x,\,\nu x,\,x^2,\,\eta x^2,\,y\rangle,
\end{eqnarray*}
where the (homotopy group, Adams filtration) of elements is $(2,1)$ for $v_1$, $(6,1)$ for $v_2$, $(17,3)$ for $x$,
and $(42,8)$ for $y$.\end{thm}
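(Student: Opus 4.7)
The plan is to run the Adams spectral sequence for $\pi_*(X\w\tmf)$, deriving $E_2$ from Theorem \ref{extCthm} via the cofiber sequence of Theorem \ref{GMthm}(d). The cofiber sequence $X\w\tmf\mapright{\ft}bo\to C$ induces a long exact sequence
$$\cdots\to\ext^{s,t}_A(H^*(X\w\tmf))\to\ext^{s,t}_A(H^*bo)\mapright{\phi}\ext^{s,t}_A(H^*C)\mapright{\delta}\ext^{s+1,t}_A(H^*(X\w\tmf))\to\cdots,$$
in which $\ext_A(H^*bo)\cong bo_*$ and $\ext_A(H^*C)$ is given by Theorem \ref{extCthm} after the $+4$ stem shift coming from $H^*C=\Sigma^4A/A(\sq^4,\sq^{5,1})$.

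The first key point is that $\phi=0$ by a filtration comparison: at each stem $n$ the classes of $bo_*$ lie at filtration $\ge\lceil n/2\rceil$, whereas Diagram \ref{bigdiag} (shifted by $+4$ in stem) shows every class of $\ext_A(H^*C)$ strictly below that line. Since $\phi$ preserves bigrading, it vanishes, and the long exact sequence degenerates at every bigrading into
$$0\to\ext^{s,t}_A(H^*C)\mapright{\delta}\ext^{s+1,t}_A(H^*(X\w\tmf))\to\ext^{s+1,t}_A(H^*bo)\to0.$$

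Thus the $E_2$-page is the direct sum of a $(+3,+1)$-shifted copy of $\ext_A(H^*C)$ (the $(+4,0)$ from $\Sigma^4$ plus the $(-1,+1)$ from $\delta$) and an unshifted copy of $bo_*$ based at $(0,0)$. Applied to the three summands of Theorem \ref{extCthm} and the extra $bo_*$, this produces the four summands of Theorem \ref{piXtmf}: the shift of $bo_*[v_2^4]\langle a_{5,1}\rangle$ gives $bo_*[v_2^4]\langle v_1v_2\rangle$ with base class at $(8,2)$; the shift of $bo_*[v_2^4]\langle a_{21,3}\rangle$ (base $(24,4)$) combines with the $bo_*$ from $\ker(\phi)$ through a hidden $v_2^4$-extension into a single $bo_*[v_2^4]$-module based at $(0,0)$, from which the explicit quotient $\zt[v_2^8]\langle a_{21,3}\rangle$ of Theorem \ref{extCthm} is removed, yielding $\ker(bo_*[v_2^4]\to\zt[v_2^8]\langle v_2^4\rangle)$; the $bsp_*[v_2^4]\langle a_{9,2},a_{17,4}\rangle$-summand shifts to $bsp_*[v_2^4]\langle 2v_2^2,2v_1v_2^3\rangle$; and the eight-generator $\zt[v_2^8]$-summand shifts to $\zt[v_2^8]\langle\nu,\nu^2,x,\eta x,\nu x,x^2,\eta x^2,y\rangle$, with, for example, $x$ at $(17,3)$ coming from $a_{14,2}$ and $y$ at $(42,8)$ from $a_{39,7}$.

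No Adams differentials survive: the $E_2$-chart for $X\w\tmf$ is the shifted Diagram \ref{bigdiag} together with the extra $bo_*$-tower at $(0,0)$, and the separation of the $bo_*$-, $bsp_*$-, and isolated $\zt[v_2^8]$-towers together with $h_i$-naturality of the $d_r$ rules out every nonzero differential, exactly as at the end of the proof of Theorem \ref{extCthm}. The remaining non-trivial step is the hidden $v_2^4$-extension that joins the $bo_*$ at $(0,0)$ (arising from $\ker(\phi)$) to the shifted $bo_*[v_2^4]\langle v_2^4\rangle$-tower at $(24,4)$; once this extension is established the stated decomposition follows by bookkeeping through the $(+3,+1)$-shift.
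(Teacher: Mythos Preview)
Your overall approach matches the paper's: assemble the $E_2$-chart for $X\w\tmf$ from $bo_*$ together with the $(3,1)$-shift of Diagram \ref{bigdiag}, and then argue there are no differentials by $h_i$-naturality. Two points need correction.

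First, your filtration argument for $\phi=0$ is not valid as stated. The chart for $\ext_A(H^*C)$ is \emph{not} confined below the slope-$\tfrac12$ line: the $bo_*[v_2^4]$ and $bsp_*[v_2^4]$ summands of Theorem \ref{extCthm} contain full $h_0$-towers, so at many stems $\ext_A(H^*C)$ has classes in arbitrarily high filtration. What actually makes the splitting work is a parity/congruence check: after the $+4$ stem shift, every $h_0$-tower in $\ext_A(H^*C)$ sits in a stem $\equiv 1\pmod 4$ (since $a_{5,1},a_{9,2},a_{17,4},a_{21,3}$ are all in odd stems), whereas the $h_0$-towers of $bo_*$ live in stems $\equiv 0\pmod 4$; the remaining finite classes are then easily seen to miss one another in bigrading. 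Alternatively one simply observes, as the paper does, that the bottom class of $H^*(C)$ forces $\sq^4\iota_0=0$ in $H^*(X\w\tmf)$, so the chart can be taken to be the union of the two pieces.

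Second, and more importantly, there is no ``hidden $v_2^4$-extension'' to establish, and invoking one both contradicts the paper and is unnecessary. The theorem asserts only an isomorphism of \emph{graded abelian groups}, and the paper explicitly says ``By $h_i$-naturality there are no differentials or extensions.'' The passage from
\[
bo_*\ \oplus\ \ker\bigl(bo_*[v_2^4]\langle v_2^4\rangle\to \zt[v_2^8]\langle v_2^4\rangle\bigr)
\]
to $\ker\bigl(bo_*[v_2^4]\to\zt[v_2^8]\langle v_2^4\rangle\bigr)$ is pure bookkeeping: as bigraded abelian groups, $bo_*\oplus bo_*[v_2^4]\langle v_2^4\rangle\cong bo_*[v_2^4]$, and the kernel condition removes exactly the same $\zt$'s in either description. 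No multiplicative statement about $v_2^4$ acting on the bottom $bo_*$ is being claimed here (and indeed the paper is careful to say that names like $v_1v_2$, $2v_2^2$, $2v_1v_2^3$ ``describe stem and filtration'' only at this stage; the genuine product relations are deferred to Theorem \ref{v1v2thm}). You should simply drop the final paragraph about the extension.
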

\begin{proof} We use the exact sequence $$\to\pi_*(\Sigma^{-1}C)\to \pi_*(X\w\tmf)\to\pi_*(bo)\to$$ from \ref{GMthm}.
The bottom class of $H^*(C)$ causes $\sq^4\iota_0=0$ in $H^*(X\w\tmf)$, and so {\it a chart for a spectral sequence
converging to $\pi_*(X\w\tmf)$
can be formed from $bo_*$ of Diagram \ref{bodiag} together with
 Diagram \ref{bigdiag} shifted by $(3,1)$ units}. By $h_i$-naturality there are no differentials or extensions, and so
 the chart depicts $\pi_*(X\w\tmf)$. The names $v_1v_2$, $2v_2^2$, and $2v_1v_2^3$ which we give to certain generators
 are, at least at this point, meant to only describe stem and filtration.
 \end{proof}

Our next result simplifies the $bo_*$-$bsp_*$-part of this description and also incorporates as much as we
can say about the ring structure from our approach.  Our limitation is that our approach can only give
the ring structure of $\pi_*(X\w\tmf)$ up to elements of higher filtration in the Adams-type spectral sequence
we have been using. Note that we say ``Adams-type" because we have elevated the filtrations of the part from $C$
by 1 compared to an actual ASS. The reason that we can't say any better than ``up to higher filtration" is,
first of all the usual limitation of an ASS, and secondly that our multiplication of $X\w\tmf$ is only
defined up to maps of higher filtration.  It seems that such deviations would change the product
structure in $\pi_*(X\w\tmf)$. For example, the product of classes that we call $2v_2^2$ and $2v_1^4v_2^2$
(so-called because of their image in $BP_*$; note that these classes are generators---the
elements without the factor 2 are not present in $\pi_*(X\w\tmf)$) would naturally be $4v_1^4v_2^4$, an element
which would be divisible by 4 in $\pi_*(X\w\tmf)$.  However, we cannot assert that this product of generators
is divisible by 4; it might equal, for example, $4v_1^4v_2^4+v_1^{16}$.

\begin{thm} \label{v1v2thm} There is an isomorphism of graded abelian groups
$$\pi_*(X\w\tmf)\approx K\oplus\zt[v_2^8]\langle \nu,\,\nu^2,\,x,\,\eta x,\,\nu x,\,x^2,\,\nu x^2,\,v_1v_2x^2\rangle,$$
where
$$K=\ker(R\to\zt[v_2^8](v_2^4))$$
with $R$ the subring of $\bz[v_1,v_2,\eta]/(2 \eta, \eta^3)$ generated by
$2v_1^2$,  $v_1^4$,  $v_1 v_2$,  $2v_2^2$,  and $v_2^4$. The isomorphism is, up to elements of higher filtration,
an isomorphism of rings, with the additional relations
$v_1^4x=\eta v_1^3v_2^3$, $v_1v_2x=\eta v_2^4$,  $x^3=\nu v_2^8$,
 and $x^7=0$.
\end{thm}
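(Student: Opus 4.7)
The plan has three phases. First, I would repackage the three patterned summands from Theorem \ref{piXtmf}---$bo_*[v_2^4]\langle v_1v_2\rangle$, $\ker(bo_*[v_2^4]\to\zt[v_2^8]\langle v_2^4\rangle)$, and $bsp_*[v_2^4]\langle 2v_2^2,2v_1v_2^3\rangle$---as the single abelian group $K$. The match is by bigrading: enumerate the additive generators of $R$ (monomials in $2v_1^2$, $v_1^4$, $v_1v_2$, $2v_2^2$, $v_2^4$ together with their $\eta,\eta^2$-multiples in $\bz[v_1,v_2,\eta]/(2\eta,\eta^3)$) and partition them by $v_1v_2$-divisibility, by pure $v_1$-only or $v_2^{8k}$-content, and by $2v_2^2$- or $2v_1v_2^3$-divisibility. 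The three partitions correspond to the three summands, with the $bo_*$- or $bsp_*$-action supplying the $\eta,\eta^2$ hooks and $v_1^{2k}$-towers on each $\bz$-generator.

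Second, I would upgrade this to an isomorphism of rings modulo higher Adams filtration. Since $X\w\tmf$ is a ring spectrum by Theorem \ref{GMthm}(e) (and the product is defined only modulo higher filtration, as noted preceding the theorem), each pairwise product of the named generators lies at a specific bigrading of Diagram \ref{bigdiag}, and that bigrading contains a unique nonzero class modulo higher filtration. The product must therefore equal the expected $R$-product up to higher filtration, matching the ring structures on $K$ and $R$.

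Third, I would verify the four additional relations by the same bigrading-uniqueness argument. The equalities $v_1^4x=\eta v_1^3v_2^3$ at bigrading $(t-s,s)=(25,7)$, $v_1v_2\,x=\eta v_2^4$ at $(25,5)$, and $x^3=\nu v_2^8$ at $(51,9)$ (via the $v_2^8$-periodic extension of Diagram \ref{bigdiag}) each follow from the uniqueness of the class at that bigrading modulo higher filtration. For $x^7=0$, I would derive $x^7=\nu^2v_2^{16}x$ from $x^3=\nu v_2^8$ and then observe $\nu^2x=0$ since $\pi_{23}(X\w\tmf)=0$: no $v_2^{8k}$-shift of a $\zt[v_2^8]$-module generator sits in degree $23$, and $K$ has no element in degree $23$ because $R_{22}=0$ (a dimension check: $4a+8b+8c+12d+24e\not\equiv 2\pmod 4$). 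The main obstacle throughout is the filtration ambiguity inherent in a product defined only up to higher filtration; the bigrading-uniqueness observations in Diagram \ref{bigdiag} and its $v_2^8$-periodic extension are exactly what circumvent it, pinning down each relation to the best precision available.
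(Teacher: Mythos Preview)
Your phase~1 is fine and matches the paper's opening sentence.  The real gap is in phases~2 and~3.  Bigrading uniqueness tells you only that a product, \emph{if it is detected at the expected filtration}, equals the unique class living there; it does not rule out the product having strictly higher filtration, i.e.\ being zero modulo higher filtration.  Nothing in your argument shows, for instance, that $x^2\ne0$, or that $(v_1v_2)\cdot(v_1v_2)$ is the class you have labelled $v_1^2v_2^2$ rather than zero, or that $(v_1v_2)^i\cdot(2v_2^2)$ is nonzero.  Without these nonvanishing statements the symbols $v_1^iv_2^j$ are merely labels on additive generators, and the assertion that the ring structure matches that of $R$ has no content.

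The paper supplies exactly this missing input by explicit chain-level Yoneda computations against the resolution of $A_2/(\sq^4,\sq^{5,1})$ built in Theorem~\ref{extCthm}.  It lifts $h_2x$ two steps up the resolution to see that $f_5$ is the identity and hence $x\cdot x=a_{31,5}\ne0$; it computes the morphism $\psi:\bd\to\bc\otimes\bc$ realizing (\ref{Cmult}) to check $(v_1v_2)^2=v_1^2v_2^2$ (the key term being an $\iota_9\otimes\iota_9$ arising from $\sq^6\ne\sq^2\sq^4$) and then inductively $(v_1v_2)^2\cdot(v_1v_2)^i=(v_1v_2)^{i+2}$; it invokes the ring map from $\ext_{A_2}(\zt)$ for the $v_1^{4i}v_2^{8j}(2v_2^2)^e$ products; and it reads off $v_1^4\eta x=\eta^2 v_1^3v_2^3$ from the way the three copies of $A_2/(\sq^3)$ sit in the resolution, then divides by $\eta$.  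Each of these is a genuine computation that a specific product is nonzero; none is a uniqueness argument.

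Your $x^7=0$ argument also needs adjustment: you invoke $x^3=\nu v_2^8$ as an exact equality, but it is only established up to higher filtration, so your chain yields only $x^7\equiv0$ modulo higher filtration.  The clean fix (and the paper's) is to check directly that $\pi_{119}(X\w\tmf)=0$: your mod~$4$ observation already gives $K_d=0$ for $d\equiv3\ (4)$, and $119\equiv23\ (48)$ misses all eight $\zt[v_2^8]$-generators.
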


Stems of elements are as in Theorem \ref{piXtmf}.
Note that $x^7=0$, not just up to elements of higher filtration, as it lies in a zero group.

\begin{proof} It is not difficult to check that this description
is consistent as an Adams-filtered graded abelian group  with the description in Theorem \ref{piXtmf}.
We must establish various product formulas.

First we show that $x^2$ is nonzero, corresponding to $a_{31,5}$ in \ref{extCthm}. The multiplication of $X\w\tmf$ restricts to a filtration-1 map $\Sigma^{-1}C\w\Sigma^{-1}C\to\Sigma^{-1}C$.
Note that $H^*(\Sigma^{-1}C)\approx\Sigma^3A/(\sq^4,\sq^{5,1})$, and so the multiplication can be considered as
\begin{equation}\label{Cmult}\ext_{A_2}^{s,t}(A_2/(\sq^4,\sq^{5,1}))\otimes\ext_{A_2}^{s,t}(A_2/(\sq^4,\sq^{5,1}))\to
\ext_{A_2}^{s+1,t+4}(A_2/(\sq^4,\sq^{5,1}))\end{equation}
with $\iota_0\otimes\iota_0\mapsto h_2\iota_0$. With $x\in\ext^{2,16}_{A_2}(A_2/(\sq^4,\sq^{5,1}))$, the image of $x\otimes x$ is in
$\ext^{5,36}_{A_2}(A_2/(\sq^4,\sq^{5,1}))$. We wish to show it is nonzero.

Note that $x\iota_0\otimes\a\iota_0\mapsto \a h_2x\iota_0$, so we want the Yoneda product of $h_2x$ with $x$.
Using the minimal ``resolution" in the proof of \ref{extCthm}, we consider the following diagram:
$$\begin{CD}C_3@<<<C_4@<<<C_5\\
@V h_2x VV @V f_4 VV @V f_5 VV\\
\Sigma^{20}C_0 @<<< \Sigma^{20}C_1 @<<< \Sigma^{20}C_2\\
@. @. @V xVV\\
@. @. \Sigma^{36}\zt.\end{CD}$$
The relevant parts are
$$\begin{CD} \Sigma^{20}A_2 @<\sq^5,\sq^6<< (\Sigma^{25}A_2\oplus\Sigma^{26}A_2)/R@<\sq^{5,6}+\sq^{6,5},\sq^{4,6}<<
\Sigma^{36}A_2\\
@V1VV @V f_4 VV @V f_5 VV\\
\Sigma^{20}A_2@<\sq^4,\sq^{5,1}<< \Sigma^{24}A_2\oplus \Sigma^{26}A_2/\!/A_1 @<\sq^{6,6}+\sq^{7,5},\sq^{4,6}<<
\Sigma^{36}A_2\\
@. @. @VVV\\
@. @. \Sigma^{36}\zt.\end{CD}$$
We find that $f_4(\iota_{25})=\sq^1\iota_{24}$ and $f_4(\iota_{26})=\sq^2\iota_{24}+\iota_{26}$, and then that $f_5$ is the identity.

A similar argument works to show $x^3=\nu v_2^8$. Relations for $x^4$, $x^5$, and $x^6$ can be deduced from the stated relations.

 The elements
$\eta x$, $x^2$, and $y$ generate the three occurrences of $A_2/(\sq^3)$ in the resolution in the proof of \ref{extCthm}.
The $bsp_*$'s on $a_{17,4}$, $v_2^4a_{9,2}$, and $v_2^4a_{17,4}$ in Theorem \ref{piXtmf} are obtained from $\ext_{A_2}$
of these three $A_2/(\sq^3)$'s by omitting the initial $\zt$. This implies
that $v_1^4\eta x=\eta^2v_1^3v_2^3$, $v_1^4x^2=\eta^2v_1^2v_2^6$, and $v_1^4y=\eta^2v_1^3v_2^7$. One of our relations
is obtained by dividing the first of these by $\eta$, while the latter two imply that $y=v_1v_2x^2$.

The elements which we call $v_1^{4i}v_2^{8j}\cdot(2v_2^2)^e$ with $1\le e\le3$
in $E_2^{*,*}(H^*X)$ are in the image of the ring map from $\ext_{A_2}(\zt)$, and so products among them
are as we claim because of the ring structure of $\ext_{A_2}(\zt)$.
That the products of $(v_1v_2)^i$ with $2v_2^2$ are as claimed can be proved by a Yoneda product argument with the element
$2v_2^2$ of $\ext_{A_2}^{3,15}(\zt)$. To verify  this using a minimal resolution of $A/(\sq^4,\sq^{5,1})$, one should expand the
efficient resolution used in the proof of \ref{extCthm} to use only $A_2$ and $A_2/(\sq^1)$ (and not the more efficient
$A_2/\!/A_1$ and $A/(\sq^3)$). This produces some additional $\sq^4\sq^6$ terms in the resolution.
The following not-quite-commutative diagram of not-quite-exact sequences shows the most relevant terms in the morphism
from a portion of the resolution built on $(v_1v_2)^i$ to the most relevant terms of the resolution of $\zt$, and can be used to
establish that the Yoneda product of the element that we call $(v_1v_2)^i$ followed by the element that we call $2v_2^2$ equals the
element that we call $2v_1^iv_2^{i+2}$.
$$\begin{CD}A_2/(\sq^1)@<\sq^{4,6}<< \Sigma^{10}A_2@<\sq^2<< \Sigma^{12}A_2@<\sq^3<<\Sigma^{15}A_2\\
@V1VV @V\sq^{4,2}VV @V\sq^4VV @V1VV\\
A_2/(\sq^1)@<\sq^4<< \Sigma^4A_2@<\sq^4<< \Sigma^8A_2@<\sq^7<< \Sigma^{15}A_2.\end{CD}$$
For example, when $i=2$, the top row of this diagram corresponds to elements in $(13,3)$, $(22,4)$, $(23,5)$, and $(25,6)$ in Diagram \ref{bigdiag}.

To see that the elements that we call $v_1^iv_2^i$ multiply by one another as the notation suggests, we consider the morphism of minimal resolutions
inducing (\ref{Cmult}).
Let $$\bc:C_0\leftarrow C_1\leftarrow\cdots\qquad\text{(resp. }\bd:D_0\leftarrow D_1\leftarrow\cdots)$$ be a
minimal $A_2$-resolution of
$\Sigma^3A_2/(\sq^4,\sq^{5,1})$ (resp. $\Sigma^2\ker(A_2\to A_2/(\sq^4,\sq^{5,1})$).
Then (\ref{Cmult}) is induced by a morphism $\bd\mapright{\psi}\bc\otimes\bc$.
The class which we call $v_1^iv_2^i$ is dual to a generator $\a_i\in(C_{2i-1})_{10i-1}$ and to a generator
$\b_i\in(D_{2i-2})_{10i-2}$.

First we show that the square of our $v_1v_2$ class equals our class called $v_1^2v_2^2$.
The relevant parts are that $C_1\mapright{d_1} C_0$ has $C_0=\Sigma^3A_2$, $C_1=\Sigma^7A_2\oplus\Sigma^9A_2/\!/A_0$,
with $d_1(\iota_7)=\sq^4\iota_3$ and $d_1(\iota_9)=\sq^{5,1}\iota_3$, while the relevant part of  $\bd$  is
$$\Sigma^{18}A_2\mapright{d_2}\Sigma^{13}A_2/\!/A_0\mapright{d_1}\Sigma^6A_2$$
with $d_2(\iota_{18})=\sq^5\iota_{13}$ and $d_1(\iota_{13})=\sq^7\iota_6$. In the commutative diagram of exact sequences
$$\begin{CD} D_2@>d_2>> D_1@>d_1>> D_0\\
@V f_2 VV @V f_1 VV @V f_0 VV\\
C_1\otimes C_1 @>>> C_0\otimes C_1\oplus C_1\otimes C_0 @>>> C_0\otimes C_0
\end{CD}$$
we must have
\begin{eqnarray*}f_0(\iota_6)&=&\iota_3\otimes\iota_3\\
f_1(\iota_{13})&=&(1+T)(\iota_3\otimes\sq^3\iota_7+\sq^1\iota_3\otimes(\sq^2\iota_7+\iota_9)+\sq^2\iota_3\otimes\sq^1\iota_7
+\sq^3\iota_3\otimes\iota_7)\\
f_2(\iota_{18})&=&\iota_9\otimes\iota_9,\end{eqnarray*}
implying the result. Note that the important term here was the $\iota_9$, which occurred because of the difference between
$\sq^6$ and $\sq^2\sq^4$.

Now we show that the class which we call $v_1^2v_2^2$ times the class which we call $v_1^iv_2^i$ equals the class that we
call $v_1^{i+2}v_2^{i+2}$. This, with the result of the preceding paragraph, implies that all powers of $v_1v_2$
are as claimed.

The class which we call $2v_1^iv_2^{i+2}$ is dual to a generator $\g_{i+1}\in(C_{2i+2})_{10i+14}$ and to a
generator $\delta_{i+1}\in(D_{2i+1})_{10i+13}$.
In the resolutions, $d(\a_{i+1})\equiv\sq^5\g_i$ and  $d(\b_{i+1})\equiv\sq^5\delta_i$ mod other terms, where $\a_{i+1}$ and $\b_{i+1}$ are dual to $v_1^{i+1}v_2^{i+1}$ as above.
Because the product of our $2v_2^2$ class and our $v_1^iv_2^i$ class equals our $2v_1^iv_2^{i+2}$ class,
as was shown earlier, we conclude that in $\bd\mapright{\psi}\bc\otimes\bc$, $\psi(\delta_{i+1})=\g_1\otimes\a_i$ plus other
terms.  Thus, modulo other terms,
we have
$$d(\psi(\b_{i+2}))= \psi(d(\b_{i+2}))\equiv\sq^5\g_1\otimes\a_i$$
and $$d(\a_2\otimes\a_i)\equiv\sq^5\g_1\otimes\a_i,$$
from which we conclude $\psi(\b_{i+2})=\a_2\otimes\a_i$, which is equivalent to our claim.

Now that we know that the classes which we have named by monomials in $v_1$ and $v_2$ multiply consistently with
these names, we can deduce the final relation $v_1v_2x=\eta v_2^4$ from $v_1^4x=\eta v_1^3v_2^3$ by multiplying the latter
by $v_1v_2$ and then dividing by $v_1^4$.
\end{proof}

\section{An 8-cell model related to $\TMF(3)$}\label{sec2}
In \cite[\S7]{MR}, another connective model for $\TMF(3)$ is discussed, which is $Z\w\tmf$, where $Z$ is a certain
8-cell complex. Although $Z\w\tmf$ is not a ring spectrum, it is still true that $v_2^{-1}Z\w\tmf\simeq \TMF(3)$.
The importance of this model is primarily that the dimensions of the cells of $Z$ allow one to construct a
map $Z\to \TMF(3)$ thanks to certain homotopy groups of $\TMF(3)$ being 0. The other models are then related to
$\TMF(3)$ via the $Z$-model. In this section, we provide some additional details to the sketch given in \cite{MR}.

Let $X_7=S^0\cup_\nu e^4\cup_\eta e^6\cup_2 e^7$ be as in the proof of Theorem \ref{GMthm}.a,
and let $X_{421}=\Sigma^7DX_7=S^0\cup_2e^1\cup_\eta e^3\cup_\nu e^7$.
\begin{lem} \label{MRlem} The map $S^6\mapright{\nu^2} S^0\hookrightarrow X_{421}$ extends to a map
$\Sigma^6X_7\to X_{421}$.\end{lem}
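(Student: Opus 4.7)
My plan is to extend $\nu^2\colon S^6\to X_{421}$ one cell at a time along the skeletal filtration
$$Y_0=S^6\ \subset\ Y_1=S^6\cup_\nu e^{10}\ \subset\ Y_2=Y_1\cup_\eta e^{12}\ \subset\ Y_3=\Sigma^6X_7.$$
The three successive extension obstructions lie in $\pi_9(X_{421})$, $\pi_{11}(X_{421})$, and $\pi_{12}(X_{421})$, each obtained by composing the previous extension with the next attaching map $\nu$, $\eta$, $2$ (lifted to $Y_i$ in the latter two cases).

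The critical first obstruction is $\nu^2\cdot\nu=\nu^3\in\pi_9(X_{421})$, and I would kill it via the interplay of two facts. First, at the prime $2$, $\nu^3\in\pi_9^s(S^0)$ lies in the image of multiplication by $\eta$ on $\pi_8^s$; this can be seen from the relation $h_2^3=h_1^2h_3$ in $\ext_A$, together with the fact that the remaining generators of $\pi_9^s$ are themselves $\eta$-divisible. Second, the $3$-cell of $X_{421}$, attached by $\eta$, provides a null-homotopy for $S^1\xrightarrow{\eta}S^0\hookrightarrow X_{421}$. Writing $\nu^3=\eta\cdot\xi$ with $\xi\in\pi_8^s$, the composite $S^9\xrightarrow{\xi}S^1\xrightarrow{\eta}S^0\hookrightarrow X_{421}$ is null, so an extension $f_1\colon Y_1\to X_{421}$ exists.

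The remaining two steps work on the same principle. For the obstruction $f_1\circ\eta\in\pi_{11}(X_{421})$ to extending across $e^{12}$, one uses that the attaching map involves $\eta$ on the $10$-cell and that the bottom-cell inclusion $S^0\hookrightarrow X_{421}$ also annihilates $\nu$ (null-homotopy from the $7$-cell attached by $\nu$); after a suitable choice of $f_1$ within its indeterminacy $\pi_{10}(X_{421})$, this can be arranged to vanish. The final obstruction across $e^{13}$ is of the form $2\cdot f_2|_{S^{12}}$ and is killed analogously using the $1$-cell of $X_{421}$ attached by $2$.

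The main technical obstacle is the bookkeeping of indeterminacies across all three steps: each extension $f_i$ is defined only up to a subgroup of $\pi_*(X_{421})$, and these choices determine the next obstruction, so one must verify that compatible choices exist simultaneously. A cleaner organization would phrase the whole argument as a matrix Toda bracket computation, or else exploit the duality $X_{421}=\Sigma^7 DX_7$ to recast the problem as constructing a class in $\pi_1^s(X_7\w X_7)$ that restricts correctly on $S^0\w X_7$; the symmetry of this dual picture may admit a more direct construction.
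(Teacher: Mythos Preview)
Your overall plan---extend cell by cell, with successive obstructions in $\pi_9(X_{421})$, $\pi_{11}(X_{421})$, $\pi_{12}(X_{421})$---is exactly the right framework, and is in effect what the paper's cited argument does: \cite{MR} simply computes the Adams spectral sequence of $X_{421}$ through dimension $13$ and reads off that these obstruction groups vanish.

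The problem is your mechanism for killing the obstructions. You write that ``the $3$-cell of $X_{421}$, attached by $\eta$, provides a null-homotopy for $S^1\xrightarrow{\eta}S^0\hookrightarrow X_{421}$,'' and later that the $7$-cell ``attached by $\nu$'' nullhomotopes $\nu$ on the bottom cell. But in $X_{421}=S^0\cup_2e^1\cup_\eta e^3\cup_\nu e^7$ the $3$-cell is attached by $\eta$ to the $1$-cell (this is what $\sq^2\colon H^1\to H^3$ records), and the $7$-cell is attached by $\nu$ to the $3$-cell. Neither attaches to $S^0$. In fact $\eta$ on the bottom cell is \emph{not} null: from the cofibrations $M(2)\to X_{421}^{(3)}\to S^3$ and $X_{421}^{(3)}\to X_{421}\to S^7$ one gets $\pi_1(X_{421})\cong\pi_1(M(2))\cong\zt$, generated by $\eta\cdot\iota_0$. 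So writing $\nu^3=\eta\cdot\xi$ in $\pi_9^s$ does not by itself kill $\nu^3$ in $\pi_9(X_{421})$. The same misreading undercuts your second and third steps, which in any case are only sketched (``after a suitable choice \dots\ this can be arranged to vanish'' is not yet an argument).

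What actually makes the first obstruction vanish is a genuine feature of $\pi_9(X_{421})$ that you only see after computing it---for instance via the ASS, where the relevant classes in filtration $\le 3$ are killed by differentials or by the module structure over $\ext_A(\zt)$ coming from the cells in dimensions $1$, $3$, $7$. Once you are committed to computing $\pi_9$, $\pi_{11}$, $\pi_{12}$ of $X_{421}$ honestly, you are doing the \cite{MR} proof; there is no shortcut of the kind you propose. Your dual reformulation in terms of $\pi_1^s(X_7\wedge X_7)$ is a reasonable alternative packaging, but it requires the same input.
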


The proof in \cite{MR}, using the ASS of $X_{421}$ through dimension 13, is perfectly clear.

\begin{defin} Let $Z$ denote the mapping cone of the map $\Sigma^{23}X_7\to \Sigma^{17}X_{421}$ obtained from Lemma \ref{MRlem}.
\end{defin}

\begin{prop}\label{toTMF3} There is an element $x\in\pi_{17}(\TMF(3))$ of order 2 which is not divisible by $\eta$,
and a map $Z\to \TMF(3)$ which extends this map $x$.\end{prop}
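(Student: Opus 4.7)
The plan is obstruction-theoretic: identify a candidate $x\in\pi_{17}(\TMF(3))$ with the stated properties, regard it as a map $S^{17}\to \TMF(3)$, and extend cell-by-cell over $Z$, verifying that each successive obstruction vanishes. Since $Z$ has cells in dimensions $17,18,20,24,24,28,30,31$, the extension obstructions live in $\pi_n(\TMF(3))$ for $n\in\{17,19,23,23,27,29,30\}$; the authors' guiding remark (that the dimensions of the cells of $Z$ are arranged so that the map to $\TMF(3)$ is easy to construct) means that each of these groups either is zero or contains no candidate for the obstruction class.

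To locate $x$, I would use that $v_2^{-1}(X\w\tmf)\simeq \TMF(3)$, so $\pi_*(\TMF(3))$ in this range can be read off from Theorems \ref{piXtmf} and \ref{v1v2thm}. The class there denoted $x$ lies in the summand $\zt[v_2^8]\langle x\rangle$ at bigrading $(17,3)$, so its image in $\pi_{17}(\TMF(3))$ has order $2$. For non-$\eta$-divisibility, I would inspect Diagram \ref{bigdiag} and observe that no class at bigrading $(16,2)$ is $h_1$-linked to $(17,3)$, and that inverting $v_2$ preserves this feature in the stems below $24$.

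Extending to the four-cell subcomplex $\Sigma^{17}X_{421} = S^{17}\cup_2 e^{18}\cup_\eta e^{20}\cup_\nu e^{24}$ gives the successive obstructions $2x=0$, a class in $\pi_{19}(\TMF(3))$, and a class in $\pi_{23}(\TMF(3))$. Inspection of the $v_2$-inverted homotopy from Theorem \ref{piXtmf} shows that these groups (or at least the images of the relevant obstructions in them) vanish. Thus one obtains a map $f\colon \Sigma^{17}X_{421}\to \TMF(3)$ restricting to $x$ on the bottom cell.

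The final step, which will be the main obstacle, is to extend $f$ over the cone on $\Sigma^{23}X_7 \to \Sigma^{17}X_{421}$ defined above; equivalently, to show the composite $\Sigma^{23}X_7 \to \Sigma^{17}X_{421}\xrightarrow{f}\TMF(3)$ is null. Its restriction to the bottom cell is $\nu^2 x \in \pi_{23}(\TMF(3))$; having chosen $x$ (or adjusted it within its $v_2^8$-periodic coset), this must vanish, and then the higher obstructions to nullity lie in $\pi_{27}$, $\pi_{29}$, $\pi_{30}(\TMF(3))$, coming from the cells $e^4$, $e^6$, $e^7$ of $X_7$ attached by $\nu$, $\eta$, $2$. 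The key difficulty is that here one is controlling a composite, not a single homotopy class, so indeterminacies in the earlier extensions must be used to kill these obstructions in sequence; the sparse description of the $v_2^8$-periodic families in Theorem \ref{v1v2thm} makes the verification tractable but requires case-by-case comparison with the Ext chart.
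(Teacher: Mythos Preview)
Your overall obstruction-theoretic strategy matches the paper's, but there is a genuine circularity in your argument. You propose to compute $\pi_*(\TMF(3))$ by invoking the equivalence $v_2^{-1}(X\w\tmf)\simeq \TMF(3)$ together with Theorems \ref{piXtmf} and \ref{v1v2thm}. But that equivalence is Corollary \ref{Xcor}, which is deduced from Theorem \ref{equiv} and Corollary \ref{equcor}, both of which depend on Proposition \ref{toTMF3} itself. Indeed, the entire reason the paper introduces $Z$ is that it is the \emph{first} model one can map to $\TMF(3)$; the maps from the other models are then built by factoring through $Z$. The paper avoids this trap by appealing to the independent computation of $\pi_*(\TMF(3))$ in \cite[4.1]{MR}, obtained from the Weierstrass curve via the elliptic spectral sequence rather than from any connective model. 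The proof even flags this explicitly: ``This is where we need input from the theory of topological modular forms.'' So you must cite \cite{MR} for the existence and properties of $x$ and for the vanishing of $\pi_i(\TMF(3))$ in the relevant stems, not your connective model.

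Second, your final paragraph overcomplicates the extension step. Once one knows (from \cite{MR} or from Diagram \ref{TMF3diag}) that $\pi_i(\TMF(3))=0$ for $i=19,23,27,29,30$, \emph{every} obstruction after $2x=0$ lies in a zero group. In particular $[\Sigma^{23}X_7,\TMF(3)]=0$ outright, since the cells of $\Sigma^{23}X_7$ sit in dimensions $23,27,29,30$; there is no ``main obstacle,'' no need to verify $\nu^2x=0$ separately, and no indeterminacies to manage. The paper's proof is accordingly one sentence at this point.
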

\begin{proof} This is where we need input from the theory of topological modular forms.
In \cite{MR}, a 48-periodic ring spectrum $\TMF(3)$ (called there $\TMF(\Gamma_0(3))$)
is defined and its homotopy groups calculated, using a spectral sequence defined using results
about elliptic curves. Their result (\cite[4.1]{MR}), localized at 2, is a $v_2^8$-inverted
version of our Theorem \ref{v1v2thm}, but with their ring structure being precise, not just up to
elements of higher filtration. We emphasize that our Theorem \ref{v1v2thm} and \cite[4.1]{MR} are
totally independent calculations. Our \ref{v1v2thm} uses only homotopy theory (and the existence of a
ring spectrum tmf with $H^*(\tmf)\approx A/\!/A_2$), while \cite[4.1]{MR} uses the Weierstrass curve.

A schematic of $\pi_*(\TMF(3))$ from the ASS viewpoint is given in Diagram \ref{TMF3diag}.
Each collection of four closely-spaced towers represents infinitely many such towers in the same stem. If the lowest of
these begins in filtration $s$, then there are such towers in filtration $s+2i$ for all $i\ge0$, with a slight exception in dimension 24.
The names of the bottom generators
are $1$, $2v_1^2$, $v_1v_2$, $2v_2^2$, $v_1^2v_2^2$, $2v_1v_2^3$, $2v_2^4$, $2v_1^2v_2^4$, $v_1v_2^5$, $2v_2^6$,
$v_1^2v_2^6$, and $2v_1v_2^7$. The name of the generator in filtration $s+2i$ is $v_1^{3i}v_2^{-i}$ times that of the
bottom generator, except that in dimension
24, we have $2v_2^4$ and $v_1^{3i}v_2^{4-i}$ for all $i>0$.
The eight  $\zt$'s along the bottom, indicated by a solid dot, occur only once, in the indicated filtration.
Because of period 48, this is the whole picture.

\medskip
\begin{minipage}{6.5in}
\begin{diag}\label{TMF3diag}{Schematic of $\pi_*(\TMF(3))$}
\begin{center}
\begin{picture}(480,200)(30,0)
\def\mp{\multiput}
\def\elt{\circle*{1.75}}
\def\ldt{\circle*{1.2}}
\put(-5,10){\line(1,0){500}}
\put(-2,0){$0$}
\put(38,0){$4$}
\put(78,0){$8$}
\put(116,0){$12$}
\put(156,0){$16$}
\put(195,0){$20$}
\put(235,0){$24$}
\put(275,0){$28$}
\put(315,0){$32$}
\put(355,0){$36$}
\put(395,0){$40$}
\put(435,0){$44$}
\mp(0,10)(1.5,20){4}{\vector(0,1){50}}
\mp(40,40)(1.5,20){4}{\vector(0,1){50}}
\mp(80,30)(1.5,20){4}{\vector(0,1){50}}
\mp(120,40)(1.5,20){4}{\vector(0,1){50}}
\mp(160,50)(1.5,20){4}{\vector(0,1){50}}
\mp(200,60)(1.5,20){4}{\vector(0,1){50}}
\put(240,60){\vector(0,1){40}}
\mp(241.5,70)(1.5,20){3}{\vector(0,1){50}}
\mp(280,80)(1.5,20){4}{\vector(0,1){50}}
\mp(320,70)(1.5,20){4}{\vector(0,1){50}}
\mp(360,80)(1.5,20){4}{\vector(0,1){50}}
\mp(400,90)(1.5,20){4}{\vector(0,1){50}}
\mp(440,100)(1.5,20){4}{\vector(0,1){50}}
\mp(7.5,116)(0,4){3}{\ldt}
\mp(47.5,146)(0,4){3}{\ldt}
\mp(87.5,136)(0,4){3}{\ldt}
\mp(127.5,146)(0,4){3}{\ldt}
\mp(167.5,156)(0,4){3}{\ldt}
\mp(207.5,166)(0,4){3}{\ldt}
\mp(247.5,156)(0,4){3}{\ldt}
\mp(287.5,186)(0,4){3}{\ldt}
\mp(327.5,176)(0,4){3}{\ldt}
\mp(367.5,186)(0,4){3}{\ldt}
\mp(407.5,196)(0,4){3}{\ldt}
\mp(447.5,206)(0,4){3}{\ldt}
\mp(0,10)(1.5,20){4}{\line(1,1){20}}
\mp(80,30)(1.5,20){4}{\line(1,1){20}}
\mp(160,50)(1.5,20){4}{\line(1,1){20}}
\mp(320,70)(1.5,20){4}{\line(1,1){20}}
\mp(400,90)(1.5,20){4}{\line(1,1){20}}
\put(250,60){\line(1,1){10}}
\mp(241.5,70)(1.5,20){3}{\line(1,1){20}}
\put(0,10){\line(3,1){60}}
\mp(30,20)(30,10){2}{\elt}
\mp(170,40)(10,10){2}{\elt}
\put(170,40){\line(1,1){10}}
\put(170,40){\line(3,1){30}}
\put(200,50){\elt}
\mp(340,70)(30,10){2}{\elt}
\put(340,70){\line(3,1){30}}
\put(420,90){\elt}
\end{picture}
\end{center}
\end{diag}
\end{minipage}

\bigskip
The extension of $x$ over $Z$ occurs because $2x=0$ and $\pi_i(\TMF(3))=0$
for $i=19$, $23$, $27$, $29$, and $30$, showing that the obstructions to extending over
the remaining cells are all 0.
\end{proof}

The spectrum $Z\w\tmf$ will be one of our connective models of $\TMF(3)$.
The following result gives its homotopy groups, which are closely related
to those of $X\w\tmf$.
\begin{thm} \label{piZ} There is an isomorphism of graded abelian groups
$$\pi_*(Z\w\tmf)\approx {\widetilde K}\oplus\zt[v_2^8]\langle \,x,\,\eta x,\,\nu x,\,x^2,\,\nu x^2,\,v_1v_2x^2,\,v_2^8 \nu,\,v_2^8\nu^2\rangle,$$
where
$${\widetilde K}=\ker({\widetilde R}\to\zt[v_2^8]\langle v_2^4\rangle )$$
with $\widetilde R$ the subgroup of the ring $R$ of \ref{v1v2thm} spanned by all elements divisible by $v_2^3$.
\end{thm}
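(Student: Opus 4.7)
The plan is to compute $\pi_*(Z\w\tmf)$ from the long exact sequence obtained by smashing with $\tmf$ the defining cofiber sequence
$$\Sigma^{23}X_7\mapright{f}\Sigma^{17}X_{421}\to Z$$
for $Z$. By change of rings (using $H^*(\tmf)\approx A/\!/A_2$), the Adams $E_2$-term for $X_7\w\tmf$ is $\ext_{A_2}(H^*X_7)$ and similarly for $X_{421}\w\tmf$. Each of these $A_2$-modules has just four classes, joined by primary operations $\sq^4,\sq^2,\sq^1$, so both Ext groups are computable by short minimal $A_2$-resolutions analogous to the one built for $A_2/(\sq^4,\sq^{5,1})$ in the proof of Theorem \ref{extCthm}. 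The resulting Ext groups should again assemble into $bo_*$, $bsp_*$, and $\zt[v_2^8]$ pieces, which, after the dimensional shifts by $\Sigma^{23}$ and $\Sigma^{17}$, can be expressed in the $v_1,v_2,\eta,\nu$ notation of Theorem \ref{v1v2thm}.

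The crucial step is to identify the induced map
$$f_*:\pi_*(\Sigma^{23}X_7\w\tmf)\to\pi_*(\Sigma^{17}X_{421}\w\tmf).$$
By Lemma \ref{MRlem}, $f$ extends $\Sigma^{17}\nu^2:S^{23}\to S^{17}$ on bottom cells, and the vanishing argument used there shows that the extension is unique up to classes of higher Adams filtration. On the $E_2$-page, $f_*$ is therefore realized by $h_2^2$-multiplication between the corresponding generators, with all other entries forced by $h_1$- and $h_2$-naturality. Tracking this $h_2^2$-action across the Ext summands is the main obstacle: one must verify that $f_*$ hits precisely those classes of $\pi_*(\Sigma^{17}X_{421}\w\tmf)$ that correspond, under an ``upper portion'' comparison with $\pi_*(X\w\tmf)$, to the elements of $K\setminus\widetilde K$ together with $\nu$ and $\nu^2$, while the connecting map from the top $24$-dimensional cell of $\Sigma^{17}X_{421}$ produces the two new classes $v_2^8\nu$ and $v_2^8\nu^2$ that do not appear in Theorem \ref{v1v2thm}.

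Assembling the kernel and cokernel contributions from the long exact sequence then gives $\pi_*(Z\w\tmf)$ in the claimed form. Since the statement asserts only an isomorphism of graded abelian groups, we avoid the delicate multiplicative bookkeeping of Theorem \ref{v1v2thm}; the names of the surviving generators are inherited from \ref{v1v2thm} via the comparison with $X\w\tmf$, or equivalently via the map $Z\w\tmf\to\TMF(3)$ from Proposition \ref{toTMF3}, which detects each $x^i$, $\eta x$, $\nu x$, etc.\ in the matching named class of $\pi_*(\TMF(3))$ depicted in Diagram \ref{TMF3diag}.
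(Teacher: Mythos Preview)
Your approach is valid and is in fact explicitly acknowledged in the paper as a possible route: the authors write ``We could compute $E_2(Z\w\tmf)$ by first computing $\ext_{A_2}(M_7)$ and $\ext_{A_2}(M_{421})$ (and these have been computed in \cite{DM} and \cite{GM}), but we prefer the following method which relates it directly to $E_2(X\w\tmf)$.'' So your plan would work, but the paper deliberately takes a different path.

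The paper's argument bypasses the separate computation of the two Ext groups and the tracking of $h_2^2$ through them. Instead it lets $P=\ker(d_1)$ in the resolution of $A_2/(\sq^4,\sq^{5,1})$ already built in the proof of Theorem~\ref{extCthm}, and verifies a single four-term exact sequence of $A_2$-modules
\[
0\to\Sigma^{24}M_7\to\Sigma^{11}A_2/(\sq^1,\sq^5)\mapright{d_2}P\to\Sigma^{16}M_{421}\to0.
\]
A short diagram chase then identifies the long exact sequence you propose (with the connecting map $d(\iota_{24})=h_2^2\iota_{17}$) with the exact sequence built from the short exact sequences into which the four-term sequence splits. The upshot is an isomorphism $E_2(Z\w\tmf)\approx\ker\bigl(\ext_{A_2}(P)\mapright{d_2^*}\ext_{A_2}(\Sigma^{11}A_2/(\sq^1,\sq^5))\bigr)$. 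Since $\ext_{A_2}(P)$ is literally Diagram~\ref{bigdiag} minus its first two $\zt$'s and first $bo_*$ (with a regrading), and $d_2^*$ surjects onto the $bsp_*$ beginning in $(9,2)$, the answer drops out as a visible subchart of the one already drawn for $X\w\tmf$, with the naming of classes inherited for free.

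What the paper's method buys is precisely what you identify as ``the main obstacle'' in yours: you never have to track $h_2^2$-multiplication across two independently computed Ext charts, nor argue after the fact why the survivors match up with named classes in $\pi_*(X\w\tmf)$. Your method is more elementary in that it uses only the defining cofibration, but it trades that for a substantial amount of bookkeeping that the paper's algebraic trick absorbs into one line.

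One small correction: the classes $v_2^8\nu$ and $v_2^8\nu^2$ do appear in Theorem~\ref{v1v2thm}, as part of $\zt[v_2^8]\langle\nu,\nu^2,\ldots\rangle$. The point is rather that $\nu$ and $\nu^2$ themselves (the $v_2^0$ copies) are absent from $\pi_*(Z\w\tmf)$, so the $\zt[v_2^8]$-module generator list must start at $v_2^8\nu$ instead of $\nu$.
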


In dimension $\le 51$, $\pi_*(Z\w\tmf)$ may be seen in Diagram \ref{bigdiag} by removing the first two $\zt$'s, and the $bo_*$ starting in
the 5-stem, and the $bsp_*$ starting in the 9-stem, and increasing stems of all elements by 3. Thus the first element would be the
$\zt$ class $x$, which appears in \ref{bigdiag} in position $(14,2)$, and is in the 17-stem for $Z$.  For the ASS-type chart
that we will describe in our proof, filtrations should be decreased by 2, so that $x$ appears in filtration 0.

\begin{proof} Let $M_7=H^*(X_7)$ be the $A$-module (or $A_2$-module) whose only nonzero groups are $\zt$ in dimensions 0, 4, 6, and 7 with $\sq^7\ne0$,
and let $M_{421}$ be the $A$-module or $A_2$-module whose only nonzero groups are $\zt$ in dimensions 0, 1, 3, and 7 with $\sq^4\sq^2\sq^1\ne0$.
There is an exact sequence
\begin{eqnarray}\nonumber&\to&\ext_{A_2}^{s-2,t-1}(\Sigma^{24}M_7)\to\ext_{A_2}^{s,t}(\Sigma^{17}M_{421})\to E_2^{s,t}(Z\w\tmf)\\
&\to&\ext_{A_2}^{s-1,t-1}(\Sigma^{24}M_7)
\mapright{d}
\ext_{A_2}^{s+1,t}(\Sigma^{17}M_{421})\to,\label{Zseq}\end{eqnarray}
with $d(\iota_{24})=h_2^2\iota_{17}$. Here $E_2(Z\w\tmf)$ is the $E_2$-term of a spectral sequence converging to $\pi_*(Z\w\tmf)$. We could compute
$E_2(Z\w\tmf)$ by first computing $\ext_{A_2}(M_7)$ and $\ext_{A_2}(M_{421})$ (and these have been computed in \cite{DM} and \cite{GM}),
but we prefer the following method which relates it directly to $E_2(X\w\tmf)$.

Let $P=\ker(d_1)$ in the resolution in the proof of Theorem \ref{extCthm}. One easily verifies that there is an exact sequence
of $A_2$-modules
$$0\to\Sigma^{24}M_7\mapright{i}\Sigma^{11}A_2/(\sq^1,\sq^5)\mapright{d_2}P\mapright{q}\Sigma^{16}M_{421}\to 0$$
with $d_2(\iota_{11})=\sq^7I_4$, $q(\sq^{6,6+7,5}I_4+\sq^{4,6}I_6)=\text{gen}_{16}$, and $i(\iota_{24})=\sq^{6,7+4,6,3}\iota_{11}$.

Note that $\ext_{A_2}(P)$ consists of a shifted version of Diagram \ref{bigdiag} minus the first two $\zt$'s and the first $bo_*$.
It is shifted so that the (now) initial tower, which did begin in $(9,2)$, now begins in $(11,0)$. Note also that \begin{equation}\label{d2s}\ext_{A_2}(P)\mapright{d_2^*}
\ext_{A_2}(\Sigma^{11}A_2/(\sq^1,\sq^5))\end{equation} is surjective, because of the $bsp_*$ in \ref{bigdiag} beginning in $(9,2)$.

Let $K=\im(d_2)=\ker(q)$. There is a commutative diagram of exact sequences, with $\ext=\ext_{A_2}$ and all Ext groups having the same second
superscript $t$,

\centerline{\xymatrix{
0\ar[d] &&& 0\ar[d]\\
{\ext^{s-2}(\Sigma^{24}M_7)}\ar[d]\ar[dr]^{\delta} && &{\ext^{s-1}(\Sigma^{24}M_7)}\ar[d]\\
{\ext^{s-1}(K)}\ar[d]\ar[r]&{\ext^s(\Sigma^{16}M_{421})}\ar[r]&{\ext^s(P)}\ar[r]^j\ar[dr]_{d_2^s}&{\ext^s(K)}\ar[d]\\
{\ext^{s-1}(\Sigma^{11}A_2/(\sq^1,\sq^5))}\ar[d]&&&{\ext^s(\Sigma^{11}A_2/(\sq^1,\sq^5))}\ar[d]\\
0&&&0
}}

\bigskip
\noindent in which $d_2^*$ is surjective. By a diagram chase, this implies exactness of
\begin{equation}\label{Zseq2}\to\ext^{s-2}(\Sigma^{24}M_7)\mapright{\delta}\ext^s(\Sigma^{16}M_{421})\to\ker(d_2^s)\to\ext^{s-1}(\Sigma^{24}M_7)
\to.\end{equation}
This $\delta$ must send $\iota_{24}$ to $h_2^2\iota_{16}$ since $\ext^{2,24}(P)=0$. Thus it must agree totally with $d$ of (\ref{Zseq}),
and so the exact sequences (\ref{Zseq}) and (\ref{Zseq2}) are identical.
Therefore, $E_2(Z\w\tmf)\approx\ker(d_2^*)$, and this is the chart obtained from \ref{bigdiag}, extended indefinitely, by removing the first two
dots, the initial $bo_*$, and the $bsp_*$ starting in $(9,2)$, and regrading so that the $\zt$ in $(14,2)$ in \ref{bigdiag} is now in $(17,0)$.
\end{proof}

\begin{cor} There is a map $Z\w\tmf\to X\w\tmf$ such that the induced map $v_2^{-1}Z\w\tmf\to v_2^{-1}X\w\tmf$ is an equivalence.\label{equcor}
\end{cor}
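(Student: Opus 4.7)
The plan is to produce a map $\phi\colon Z\to X\w\tmf$ realizing the class $x\in\pi_{17}(X\w\tmf)$ of Theorem \ref{v1v2thm}, and then form the composite
\[
Z\w\tmf\mapright{\phi\w 1}X\w\tmf\w\tmf\mapright{\mu}X\w\tmf
\]
by exploiting the ring structure on $X\w\tmf$ from Theorem \ref{GMthm}(e). Taking the $v_2$-inverted version of this composite should yield the desired equivalence.

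The construction of $\phi$ proceeds by obstruction theory, exactly as in the proof of Proposition \ref{toTMF3}. Since $x$ has order $2$, it extends over the $18$-cell of $\Sigma^{17}X_{421}\subset Z$. The remaining obstructions---first to continuing over the $20$- and $24$-cells of $\Sigma^{17}X_{421}$, and then to nulling out the composite of the partial extension $\Sigma^{17}X_{421}\to X\w\tmf$ with the attaching map $\Sigma^{23}X_7\to\Sigma^{17}X_{421}$ of Lemma \ref{MRlem} so as to extend over the cone---all lie in $\pi_i(X\w\tmf)$ for $i\in\{19,23,27,29,30\}$. A direct inspection of Theorem \ref{v1v2thm} shows that each of these five groups is zero: every monomial in the ring $R$ sits in an even stem; the named generators $\nu,\nu^2,x,\eta x,\nu x,x^2,\nu x^2,v_1v_2x^2$ occupy stems $3,6,17,18,20,34,37,42$ (plus $48$-periodic shifts); and the $bo_*$- and $bsp_*$-contributions to $K$ (translated by $(3,1)$ as in the proof of Theorem \ref{piXtmf}) also miss these five stems. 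Thus $\phi$ exists.

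To verify that $v_2^{-1}\phi$ is an equivalence, I compare homotopy groups using Theorems \ref{v1v2thm} and \ref{piZ}. Under $\phi_*$, each named generator $x,\eta x,\nu x,x^2,\nu x^2,v_1v_2x^2$ in $\pi_*(Z\w\tmf)$ is carried to its namesake in $\pi_*(X\w\tmf)$, the extra generators $v_2^8\nu$ and $v_2^8\nu^2$ land in $v_2^8\nu$ and $v_2^8\nu^2$, and the inclusion $\widetilde R\hookrightarrow R$ realizes $\widetilde K\hookrightarrow K$. Because every element $r\in R$ satisfies $v_2^4\cdot r\in\widetilde R$ (the factor $v_2^4$ raises the $v_2$-adic valuation of $r$ to at least $3$ in the ambient ring), this inclusion becomes an isomorphism upon inverting $v_2$; and the missing classes $\nu,\nu^2$ in $\pi_*(Z\w\tmf)$ are recovered from their $v_2^8$-multiples in the same way. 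Hence $v_2^{-1}\phi_*$ is an isomorphism of graded abelian groups, so $v_2^{-1}\phi$ is an equivalence.

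The main obstacle lies in tracking $\phi_*$ on the named generators precisely enough, given that the multiplicative structure on $X\w\tmf$ is known only up to higher Adams filtration (as noted before Theorem \ref{v1v2thm}). However, since any such filtration discrepancy is controlled by the $v_2^8$-periodicity and cannot affect the leading terms that detect an isomorphism, this is harmless for the equivalence question after inverting $v_2$.
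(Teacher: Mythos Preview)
Your proof is correct and follows essentially the same approach as the paper: construct $Z\to X\w\tmf$ by the obstruction argument of Proposition \ref{toTMF3} (using that $\pi_i(X\w\tmf)=0$ for $i\in\{19,23,27,29,30\}$), extend over $\tmf$ via the module structure, and then compare homotopy. One small expository difference worth noting: to see that $\phi_*$ is injective with the expected cokernel, the paper invokes the identification $E_2(Z\w\tmf)\approx\ker(d_2^*)$ established in the proof of Theorem \ref{piZ}, which exhibits $\pi_*(Z\w\tmf)$ \emph{inside} $\pi_*(X\w\tmf)$ directly at the spectral-sequence level---this sidesteps having to argue that $\phi_*$ carries each named class (like $x^2$) to its namesake, which you assert but do not fully justify. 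Your closing remark about filtration ambiguities is unnecessary, since the comparison is of filtered abelian groups and the missing classes are recovered by $v_2^8$-multiplication (note $v_2^8\in K$, whereas $v_2^4\notin K$, so the paper phrases the argument in terms of $v_2^8$).
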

\begin{proof} There is a map $Z\to X\w\tmf$ extending $x$ for the same reason as in the proof of Proposition \ref{toTMF3}, namely
0 obstructions.
Smashing with $\tmf$ and following by the multiplication of $\tmf$ yields the desired map. The proof of \ref{piZ} identified
$\pi_*(Z\w\tmf)$ with the kernel of (\ref{d2s}), which is contained in $\pi_*(X\w\tmf)$. Thus $\pi_*(Z\w\tmf)$ injects
into all of $\pi_*(X\w\tmf)$ except $\nu$, $\nu^2$, and the multiples of $v_2^iv_1^j$  for $i\le 2$. These latter classes are, for $i=0$
the $bo_*$ which is  $\coker(\pi_*(\Sigma^{-1}C)\to\pi_*(X\w\tmf))$, for $i=1$ the initial $bo_*$ in \ref{bigdiag}, and for $i=2$ the
$bsp_*$ which appears in \ref{bigdiag} to begin in $(9,2)$. Since $v_2^8$ times these classes are in the image from $\pi_*(Z\w\tmf)$,
we deduce the claim that it is an equivalence after $v_2^8$ is inverted.
\end{proof}

\begin{thm}\label{equiv}
The map $Z\to \TMF(3)$ of Proposition \ref{toTMF3} induces an equivalence $v_2^{-1}Z\w\tmf\to \TMF(3)$.\end{thm}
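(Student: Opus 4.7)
The plan is to construct the comparison map, then verify it is an isomorphism on $\pi_*$ within a single $48$-dimensional period. Using the ring structure on $\TMF(3)$ and its unit map from $\tmf$, the map $Z\to\TMF(3)$ of Proposition \ref{toTMF3} extends to the composite $Z\w\tmf\to\TMF(3)\w\tmf\to\TMF(3)$. Since $v_2^8$ acts invertibly on $\pi_*(\TMF(3))$ by $48$-periodicity, this factors through a $\tmf$-module map $f\colon v_2^{-1}Z\w\tmf\to\TMF(3)$.

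By Corollary \ref{equcor}, $v_2^{-1}Z\w\tmf\simeq v_2^{-1}X\w\tmf$, and Theorem \ref{v1v2thm} with $v_2^8$ inverted gives an explicit description of its homotopy that matches Diagram \ref{TMF3diag} generator by generator; this is precisely the consistency noted between our homotopical calculation and the computation of \cite[4.1]{MR} via the Weierstrass curve. Thus as graded abelian groups the two sides are abstractly isomorphic, and the remaining task is to show $f_*$ realizes this isomorphism.

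To do this I would proceed stem by stem. By construction $f_*(x)$ is the nontrivial class $x\in\pi_{17}(\TMF(3))$, and by $\tmf$-linearity the images of all the generators $v_1^a v_2^b$, together with their products with $x$, $\eta x$, $\nu x$, $x^2$, $\nu x^2$, and $v_1v_2x^2$, are pinned down by the unit $\pi_*(\tmf)\to\pi_*(\TMF(3))$ together with the multiplicative structure. Each stem of Diagram \ref{TMF3diag} contains at most one generator in any given Adams filtration, so once $f_*$ is shown to be nonzero on a generator of a given stem and filtration, it is automatically an isomorphism there; and the $v_2^{-1}$-equivariance propagates this verification to all $v_2$-multiples.

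The main obstacle is handling the classes not arising from the $\tmf$-unit action alone, in particular the $\nu$, $\nu^2$ classes (which appear in $\pi_*(v_2^{-1}Z\w\tmf)$ only as $v_2^{-8}(v_2^8\nu)$ and $v_2^{-8}(v_2^8\nu^2)$) and the higher powers $x^k$. For these the strategy is to use the explicit construction of $x$ from Proposition \ref{toTMF3} to show $f_*$ is nontrivial on the corresponding generator, and then invoke a stem-filtration counting argument: the bijective matching of generators (Theorem \ref{v1v2thm} inverted versus Diagram \ref{TMF3diag}) combined with the nontriviality of $f_*$ at each generator forces $f_*$ to be an isomorphism in every bidegree, so $f$ is an equivalence.
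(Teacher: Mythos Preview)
Your overall architecture is right, and it matches the paper's: extend $Z\to\TMF(3)$ over $Z\w\tmf$ using the $\tmf$-module structure of $\TMF(3)$, then check on homotopy within a period. The problem is the step where you assert that ``by $\tmf$-linearity the images of all the generators $v_1^a v_2^b$\ldots are pinned down by the unit $\pi_*(\tmf)\to\pi_*(\TMF(3))$ together with the multiplicative structure.'' This is where the actual work lies, and the claim as stated does not go through.

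First, $Z\w\tmf$ is not a ring spectrum, so there is no multiplicative structure on the source for $f$ to respect. Invoking Corollary~\ref{equcor} to import the ring structure from $v_2^{-1}X\w\tmf$ does not help unless you know $f$ becomes a ring map through that equivalence, which is not established. Second, and more seriously, $\tmf$-linearity alone does not reach most of the generators: the classes $v_1^av_2^b$ in $\pi_*(Z\w\tmf)$ are names for generators, not $\tmf_*$-multiples of $x$. For instance nothing in $\tmf_*$ carries $x$ to $v_1^3v_2^3$, nor is there a $\tmf_*$-action taking $x$ to $x^2$. Your last paragraph concedes this is ``the main obstacle'' but then only restates the goal.

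The paper fills this gap with a specific chain of deductions that you are missing. From $f_*(x)=x$ and the relation $v_1^4x=\eta v_1^3v_2^3$ (known independently in both spectra) one gets $v_1^3v_2^3$; then $16v_2^2\in\tmf_*$ and a Toda bracket $2v_1^5v_2^3=\langle\eta^2 v_1^3v_2^3,\eta,2\rangle$ (which any $\tmf$-module map respects) propagate this to all $v_1^iv_2^j$ with $i,j$ odd. To reach $x^2$ the paper uses the $\pi_*(S^0)$-module action: $\overline\kappa\in\pi_{20}(S^0)$ hits $\nu x$ in $\TMF(3)$ and carries $x$ to $\nu x^2$ in $\pi_*(Z\w\tmf)$, so $\nu x^2$ and hence $x^2$ map across. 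A further bracket $2v_2^6=\langle x^2,\eta,2\rangle$ then handles the even-even $v_1^iv_2^j$, in particular $v_2^8$, whence $\nu v_2^8$ and $\nu^2 v_2^8$. These steps --- relations valid in both targets, Toda brackets, and the $S^0$-action via $\overline\kappa$ --- are exactly what your appeal to ``$\tmf$-linearity and multiplicative structure'' is standing in for, and they need to be supplied.
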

\begin{proof} We need a fact from topological modular forms that there is a map $$\tmf\w \TMF(3)\to \TMF(3)$$ making $\TMF(3)$ a
$\tmf$-module.
Using this and the map $Z\to\TMF(3)$, we obtain a map $Z\w\tmf\to \TMF(3)$. We will show it sends $\pi_*(Z\w\tmf)$ to elements of $\pi_*(\TMF(3))$
with the same names (as those of Theorem \ref{piZ}). In the proof of Proposition \ref{toTMF3}, we discussed how
\cite[4.1]{MR} can be interpreted to give $\pi_*(\TMF(3))$ as a $v_2$-inverted version of our Theorem \ref{v1v2thm}.
Then the same argument as was used in the proof of \ref{equcor} gives the asserted equivalence.

The class $x$ maps across by construction. We must deduce from this, by various types of naturality, that all other classes map
across. Our map is one of $\tmf_*$-modules. The relation $v_1^4x=\eta v_1^3v_2^3$ is present in both $\pi_*(Z\w\tmf)$
and $\pi_*(\TMF(3))$ (by Theorem \ref{v1v2thm} and \cite[4.1]{MR}, resp.), and
hence $\eta v_1^3v_2^3$ maps across, and then so also does $v_1^3v_2^3$. Since $16v_2^2$ is in $\tmf_*$, we deduce that all $v_1^iv_2^j$
with $i\equiv3$ mod 4 and $j$ odd map across. By the Toda bracket formula $2v_1^5v_2^3=\langle \eta^2 v_1^3v_2^3,\eta,2\rangle$, which
is valid in both $Z\w\tmf$ and $\TMF(3)$, we now have that all $v_1^iv_2^j$ with $i$ and $j$ odd map across.

In \cite[4.1]{MR}, it is noted that $\pi_{20}(S^0)\to\pi_{20}(\TMF(3))$ sends $\overline\kappa$ to $\nu x$. One can show, for example using
Yoneda products, that $\overline\kappa$ acting on $x\in\pi_{17}(Z\w\tmf)$ yields the class that we call $\nu x^2$. Thus $\nu x^2$
maps across, and hence so does $x^2$. There is a bracket formula $2v_2^6=\langle x^2,\eta,2\rangle$ in both spectra,
and so $v_2^6$
maps across. Arguing as before, we deduce that all $v_1^iv_2^j$ with $i$ and $j$ even map across. Knowing that $v_2^8$ maps across
implies the same for $\nu v_2^8$ and $\nu^2v_2^8$. We have now accounted for all of $\pi_*(Z\w\tmf)$.
\end{proof}

The following corollary is immediate from \ref{equcor} and \ref{equiv}.
\begin{cor}\label{Xcor} There is an equivalence $v_2^{-1}X\w\tmf\to \TMF(3)$.\end{cor}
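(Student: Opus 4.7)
The plan is essentially to chain the two equivalences that were just established. By Corollary \ref{equcor}, the map $Z\w\tmf\to X\w\tmf$ constructed there induces an equivalence
\[
v_2^{-1}(Z\w\tmf)\xrightarrow{\simeq} v_2^{-1}(X\w\tmf),
\]
so the functor $v_2^{-1}(-)$ allows us to invert it and obtain an equivalence in the opposite direction
\[
v_2^{-1}(X\w\tmf)\xrightarrow{\simeq} v_2^{-1}(Z\w\tmf).
\]
Meanwhile, Theorem \ref{equiv} states that the $\tmf$-linear extension $Z\w\tmf\to\TMF(3)$ of the map of Proposition \ref{toTMF3} becomes an equivalence after inverting $v_2$; since $\TMF(3)$ is already $v_2$-periodic, this gives
\[
v_2^{-1}(Z\w\tmf)\xrightarrow{\simeq}\TMF(3).
\]

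The plan then is simply to form the composite
\[
v_2^{-1}(X\w\tmf)\xrightarrow{\simeq}v_2^{-1}(Z\w\tmf)\xrightarrow{\simeq}\TMF(3),
\]
which is an equivalence as a composite of equivalences. There is no real obstacle: both input statements have already been proved, and the only content of the corollary is to splice them together. If one wanted to exhibit a forward map rather than a zig-zag, one could instead smash the map $Z\to \TMF(3)$ with $X\w\tmf$, use the $\tmf$-module structure on $\TMF(3)$, and factor through the $X\w\tmf$-multiplication, but this refinement is not needed for the statement as given.
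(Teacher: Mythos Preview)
Your proposal is correct and is essentially the paper's own argument: the paper simply declares the corollary ``immediate from \ref{equcor} and \ref{equiv},'' which is precisely the chaining of equivalences you describe.
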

Thus both $X\w\tmf$ and $Z\w\tmf$ can serve as connective models of $\TMF(3)$. We prefer $X\w\tmf$ because it is a ring spectrum
and gives a better approximation to $\pi_*(\TMF(3))$ prior to inverting $v_2$, but $Z\w\tmf$ was useful because it was so easy to get a
map from it into $\TMF(3)$.

\section{A model related to $\tmf\w\tmf$}\label{BGsec}
In this section we study a third model of $\tmf(3)$ introduced in \cite{MR}. This one is closely related to $\tmf\w\tmf$,
and we provide a proof that a plausible splitting of $\tmf\w\tmf$ does not occur. We clarify some aspects of the construction
in \cite{MR} and compute the homotopy groups.

Let $A^*=\zt[\zeta_1,\zeta_2,\ldots]$ denote the dual of the mod 2 Steenrod algebra. Here $\zeta_i=\chi(\xi_i)$, the conjugates of the
usual generators. Assign a weight $\wt$ on $A^*$ by $\wt(\zeta_i)=2^{i-1}$ and $\wt(ab)=\wt(a)+\wt(b)$. It is well-known and
easily verified that $$(A/\!/A_2)^*=\zt[\zeta_1^8,\zeta_2^4,\zeta_3^2,\zeta_4,\zeta_5,\ldots]$$
and there is a splitting as $A_2$-modules
$$(A/\!/A_2)^*\approx\bigoplus_{n\ge0}M_n,$$
where $M_n$ is spanned by all monomials in $(A/\!/A_2)^*$ of weight $8n$. The $A$-action is given by $\zeta_i(\chi\sq)=\zeta_i+\zeta_{i-1}^2$.
Note that $H_*(\tmf)\approx (A/\!/A_2)^*$.

Similarly $H_*(bo)=(A/\!/A_1)^*$ is isomorphic to a polynomial algebra on $\zeta_1^4$, $\zeta_2^2$, and $\zeta_i$ for $i\ge 3$.
There are $bo$-Brown-Gitler spectra $bo_n$ satisfying that $H_*(bo_n)$ is the span of all monomials in $H_*(bo)$ with weight $\le 4n$.(\cite{GJM})
One easily verifies that there is an isomorphism of $A_2$-modules
$$\bigoplus\phi_n:\bigoplus_{n\ge0}H_*(\Sigma^{8n}bo_n)\to H_*(\tmf)$$
defined by $\phi_n(\sigma^{8n}\zeta_1^{i_1}\zeta_2^{i_2}\cdots)=\zeta_1^{8n-\sum2^ji_j}\zeta_2^{i_1}\zeta_3^{i_2}\cdots$. The image of $\phi_n$
is $M_n$, the span of monomials of weight $8n$. One might ask if this isomorphism is induced by an equivalence of the spectra
$\tmf\w\tmf$ and $\bigvee\Sigma^{8n}bo_n\w\tmf$. An analogous equivalence $bo\w bo\simeq\bigvee\Sigma^{4n}{\overline B}_n\w bo$ was proved in
\cite{bo}. In that case ${\overline B}_n$ was an integral Brown-Gitler spectrum.

We answer this question and prepare for a new $\TMF(3)$ model  by proving the following result.
\begin{thm}\label{nosplit} The spectra $\tmf\w\tmf$ and $\bigvee\limits_{n\ge0}\Sigma^{8n}bo_n\w\tmf$ are not homotopy equivalent.
Indeed, in the ASS converging to $\pi_*(\tmf\w\tmf)$, which has $$E_2\approx\bigoplus_{n\ge0}\ext_{A_2}(H^*(\Sigma^{8n}bo_n)),$$
there is a class $g\in\ext_{A_2}^{0,24}(H^*(\Sigma^{16}bo_2))$ and an element $w\in\ext_{A_2}^{3,26}(H^*(\Sigma^8 bo_1))$
such that $d_3(g)=w$.
\end{thm}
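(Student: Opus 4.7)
The plan is to establish the claimed differential by identifying $g$ and $w$ explicitly and then detecting $d_3(g)\ne 0$ through naturality with respect to the product $\mu\colon\tmf\w\tmf\to\tmf$, combined with known differentials in the Adams spectral sequence for $\pi_*(\tmf)$.

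I would first pin down the two classes. Under $\phi_2$, the generator $g\in\ext^{0,24}_{A_2}(H^*(\Sigma^{16}bo_2))$ is dual to $\zeta_2^8\in M_2\subset H_*(\tmf)$, which is the unique monomial of weight $16$ and internal degree $24$. For the target, $M_1$ has monomial generators $\zeta_1^8,\zeta_2^4,\zeta_3^2,\zeta_4$ in degrees $8,12,14,15$, and a small $A_2$-resolution of $H^*(\Sigma^8bo_1)$ in the style of the proof of Theorem \ref{extCthm} produces an explicit class $w$ in bidegree $(3,26)$.

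For the detection step I would use the retraction $\mu\circ\eta_R=1_{\tmf}$, which identifies $\ext_{A_2}(\zt,\zt)=E_2(\tmf)$ with the $n=0$ summand of $E_2(\tmf\w\tmf)$. In the ASS for $\tmf$ there is a well-documented $d_3$-differential in stems near $23$ involving $\overline{\kappa}$- and $\nu$-family classes. Using the $\pi_*(\tmf)$-module structure on $\pi_*(\tmf\w\tmf)$, the products $\eta g$ and $\nu g$ sit in the $n=2$ summand but correspond under $\mu_*$ to classes in the $n=0$ summand that support the classical $d_3$; by naturality this forces a nontrivial $d_3$ on $g$ itself. Inspection of bidegree $(3,26)$ across the summands then places the target in the $n=1$ summand: the $n=0$ class there is a known permanent cycle in $\pi_*(\tmf)$ and hence cannot be hit, while the $n\ge 2$ possibilities are eliminated by a direct computation of the relevant (small) Ext groups of $H^*(\Sigma^{8n}bo_n)$ in this range. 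Hence $d_3(g)=w$ up to a unit.

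The main obstacle will be the detection step, specifically pinning down which explicit $d_3$ in $\pi_*(\tmf)$ to invoke and verifying via $\tmf_*$-module bookkeeping that its lift to $\tmf\w\tmf$ originates at $g$ rather than elsewhere in the same bidegree. Once this is done, the existence of a cross-summand $d_3$ is incompatible with a wedge decomposition (in which the ASS $E_2$ splits as a direct sum with no cross-summand differentials), and so $\tmf\w\tmf\not\simeq\bigvee_{n\ge 0}\Sigma^{8n}bo_n\w\tmf$.
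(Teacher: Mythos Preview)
Your detection strategy via $\mu\colon\tmf\w\tmf\to\tmf$ has a genuine gap. First, $\mu_*(g)=0$ at $E_2$: the target $\ext^{0,24}_{A_2}(\zt,\zt)$ is zero, since the only filtration--$0$ class in $\ext_{A_2}(\zt)$ is the unit. So nothing about $d_3(g)$ can be read off from $\mu$ directly. Your workaround through $\eta g$ or $\nu g$ then hinges entirely on the unnamed ``well-documented $d_3$-differential in stems near $23$'' in the ASS for $\tmf$. You never say what this differential is, and I do not believe one exists in the mod~$2$ Adams spectral sequence for $\tmf$ in the bidegrees you would need (you may be thinking of differentials in the Adams--Novikov or elliptic spectral sequence, or of differentials at much higher stems). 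Without this input your argument has no engine. You acknowledge as much in your final paragraph, but this is not a technicality to be cleaned up later; it is the entire content of the proof.

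The paper proceeds quite differently and does not use $\mu$ at all. It imports Bruner's differential $d_3(e_1)=h_1t$ from the ASS of the \emph{sphere} (at stems $38$/$37$), transports it to $\overline{\tmf}$ via the defining cofibre sequence $S^0\to\tmf\to\overline{\tmf}$ (using a vanishing line in $\ext_{A_2}(\zt)$ to identify low-filtration $\ext$ of $\overline{\tmf}$ with that of $S^0$), and then maps into $\overline{\tmf}\w\tmf$ via the cofibration $\overline{\tmf}\to\overline{\tmf}\w\tmf\to\overline{\tmf}\w\overline{\tmf}$. A computation that $\ext_A(H^*(\overline{\tmf}\w\overline{\tmf}))$ vanishes in the relevant bidegrees forces the image to be nonzero, and a check of the $bo_3,bo_4$ summands places it in the $bo_1\oplus bo_2$ part at $(39,3)$ and $(38,6)$. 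Finally, $h_2$-naturality pulls the differential back to $(33,1)$, and an argument extending a hypothetical surviving $g$ over $\Sigma^{24}X_7$ (where no such differential can occur) gives the contradiction that pins the $d_3$ on $g$ itself. The external input is a differential in $\pi_*^S$, not in $\pi_*(\tmf)$.
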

\begin{proof} Let $\tmfbar$ denote the cofiber of $S^0\to\tmf$. Since $\tmf$ is a ring spectrum, there is a splitting
$$\tmf\w\tmf\simeq (S^0\w\tmf)\vee(\tmfbar\w\tmf).$$
We will use the cofibration
\begin{equation}\label{tmfcof}\tmfbar\w S^0\to\tmfbar\w\tmf\to\tmfbar\w\tmfbar\end{equation}
and a differential in the ASS of $\tmfbar$ to deduce the claimed differential in the ASS of $\tmfbar\w\tmf$.

In Diagram \ref{bo1bo2diag}, we depict $\ext_{A_2}^{s,t}(H^*(\Sigma^8bo_1\vee\Sigma^{16}bo_2))$ for $s<8$, $t-s<40$.
Elements suggested by solid dots come from the first summand, and those with open circles (or connected to open circles by lines)
come from the second summand.

\medskip
\begin{minipage}{6.5in}
\begin{diag}\label{bo1bo2diag}{$\ext_{A_2}^{s,t}(H^*(\Sigma^8bo_1\vee\Sigma^{16}bo_2))$ in a range}
\begin{center}
\begin{picture}(480,140)(40,0)
\def\mp{\multiput}
\def\elt{\circle*{3}}
\def\cir{\circle{3}}
\put(-5,15){\line(1,0){485}}
\put(-3,5){$8$}
\put(112,5){$16$}
\put(232,5){$24$}
\put(352,5){$32$}
\put(0,15){\vector(0,1){115}}
\put(0,15){\line(1,1){30}}
\mp(0,15)(0,15){3}{\elt}
\mp(15,30)(15,15){2}{\elt}
\put(60,60){\vector(0,1){70}}
\mp(60,60)(0,15){2}{\elt}
\put(120,75){\vector(0,1){55}}
\put(120,75){\line(1,1){30}}
\mp(120,75)(0,15){3}{\elt}
\mp(135,90)(15,15){2}{\elt}
\put(240,75){\vector(0,1){55}}
\put(240,75){\line(1,1){30}}
\mp(240,75)(0,15){3}{\elt}
\mp(255,90)(15,15){2}{\elt}
\mp(243,75)(0,15){3}{\cir}
\mp(258,90)(15,15){2}{\cir}
\mp(243,77)(0,15){2}{\line(0,1){11}}
\mp(244,76)(15,15){2}{\line(1,1){13}}
\put(243,107){\vector(0,1){23}}
\mp(303,120)(120,0){2}{\cir}
\mp(303,122)(120,0){2}{\vector(0,1){8}}
\put(360,75){\vector(0,1){55}}
\put(360,75){\line(1,1){30}}
\mp(360,75)(0,15){3}{\elt}
\mp(375,90)(15,15){2}{\elt}
\mp(180,120)(120,0){3}{\vector(0,1){10}}
\mp(180,120)(120,0){3}{\elt}
\put(182,60){\vector(0,1){70}}
\mp(182,60)(0,15){2}{\elt}
\mp(135,30)(15,15){2}{\elt}
\put(135,30){\line(1,1){15}}
\put(135,30){\line(3,1){90}}
\mp(180,45)(45,15){2}{\elt}
\mp(363,73)(45,15){3}{\elt}
\put(363,73){\line(3,1){90}}
\put(438,88){\elt}
\put(438,88){\line(1,1){15}}
\mp(117,15)(0,15){3}{\cir}
\mp(131,29)(15,15){2}{\cir}
\mp(118,16)(14,14){2}{\line(1,1){13}}
\mp(117,16)(0,15){2}{\line(0,1){13}}
\put(117,46){\vector(0,1){84}}
\mp(177,60)(0,15){2}{\cir}
\mp(177,62)(120,0){3}{\line(0,1){11}}
\mp(177,76)(120,0){3}{\vector(0,1){54}}
\mp(237,15)(0,15){3}{\cir}
\mp(252,30)(15,15){2}{\cir}
\mp(238,17)(15,15){2}{\line(1,1){12}}
\mp(237,17)(0,15){2}{\line(0,1){11}}
\put(237,47){\vector(0,1){84}}
\mp(298,60)(0,15){2}{\cir}
\mp(357,75)(0,15){3}{\cir}
\mp(372,90)(15,15){2}{\cir}
\mp(357,77)(0,15){2}{\line(0,1){11}}
\mp(358,76)(15,15){2}{\line(1,1){12}}
\put(358,106){\vector(0,1){24}}
\mp(354,30)(0,15){3}{\cir}
\mp(370,45)(16,15){3}{\cir}
\mp(355,31)(16,15){3}{\line(1,1){14}}
\mp(354,32)(0,15){2}{\line(0,1){11}}
\put(354,62){\vector(0,1){68}}
\mp(417,60)(0,15){2}{\cir}
\mp(402,45)(0,15){2}{\cir}
\mp(402,47)(0,15){2}{\line(0,1){11}}
\mp(375,30)(45,15){3}{\cir}
\put(390,45){\cir}
\put(376,31){\line(1,1){13}}
\mp(376,31)(45,15){2}{\line(3,1){43}}
\put(450,60){\cir}
\end{picture}
\end{center}
\end{diag}
\end{minipage}

\bigskip

The cofibration which defines $\tmfbar$ induces an exact sequence
$$\to\ext_A^{s,t}(H^*(\tmf))\to\ext_A^{s,t}(H^*(\tmfbar))\to\ext_A^{s+1,t}(H^*(S^0))\to\ext^{s+1,t}_A(H^*(\tmf))\to.$$
There is a lower vanishing line in $\ext_A(H^*(\tmf))\approx\ext_{A_2}(\zt)$ (e.g. \cite[2.6]{DM}) which implies
that $\ext_A^{s,t}(H^*(\tmfbar))\approx\ext_A^{s+1,t}(H^*(S^0))$ if $s\le6$ and $t-s\ge31$.
In \cite{Brun}, it was shown that in the ASS of $S^0$ there are nonzero elements $e_1\in\ext^{4,42}_A(H^*(S^0))$ and $h_1t\in\ext_A^{7,44}(H^*(S^0))$
satisfying $d_3(e_1)=h_1t$. These elements are in the range of our asserted isomorphism, and so there must be corresponding elements
 $\overline{e_1}\in\ext_A^{3,42}(H^*(\tmfbar))$ and $\overline{h_1t}\in\ext_A^{6,44}(H^*(\tmfbar))$ related by a $d_3$-differential.

Now we consider the exact sequences in $\ext_A(-)$ and $\pi_*(-)$ induced by (\ref{tmfcof}). Using Bruner's software, we see
that $\ext_A^{s,t}(H^*(\tmfbar\w\tmfbar))=0$ if $t-s=39$ and $s>3$. Thus neither of the elements $\overline{e_1}$ or $\overline{h_1t}$
can be in the image from $\ext_A(H^*(\tmfbar\w\tmfbar))$, the second since there is nothing to hit it, and the first since
a class which hits it would have to support a differential, but there is nothing for it to hit. Thus the elements
$\overline{e_1}$ and $\overline{h_1t}$ related by the $d_3$ in the ASS of $\tmfbar$ map nontrivially to $\ext_A(H^*(\tmfbar\w\tmf))$.
One easily checks that $\ext_{A_2}(H^*(\Sigma^{24}bo_3)\oplus H^*(\Sigma^{32}bo_4))$ is 0 in these bigradings. Thus the
elements $\overline{e_1}$ and $\overline{h_1t}$ must map nontrivially to classes in $\ext_{A_2}(H^*(\Sigma^8bo_1)\oplus H^*(\Sigma^{16}bo_2))$
involved in a $d_3$-differential. These must be the two classes at the extreme right end of Diagram \ref{bo1bo2diag}, one in filtration 6 from $\Sigma^8bo_1$
and the other in filtration 3 from $\Sigma^{16}bo_2$.

This already implies the first conclusion of the theorem, that $\tmf\w\tmf$ does not split as $\bigvee\limits_{n\ge0}\Sigma^{8n}bo_n\w\tmf$.
We would like to infer from this differential the claimed nontrivial $d_3$ on the class $g$ in position $(24,0)$.
Clearly the $h_2$-action and the nonzero $d_3$ from $(39,3)$
implies that $d_3$ is nonzero on the class in $(33,1)$. Let $X_7=S^0\cup_\nu e^4\cup_\eta e^6\cup_2 e^7$ as before.
If $d_3(g)=0$, then
the homotopy class $g$ would extend to a map $\Sigma^{24}X_7\to \tmfbar\w\tmf$, since Diagram \ref{bo1bo2diag}
shows that there are no
obstructions to the extension. Smashing with $\tmf$ and following by the multiplication of $\tmf$ would yield a map
$\Sigma^{24}X_7\w\tmf \to\tmfbar\w\tmf$
extending $g$. Since $X_7=bo_1$, the ASS of $\Sigma^8X_7\w\tmf$ is just the black elements in Diagram \ref{bo1bo2diag}. The 16-suspension of the element
in $(17,1)$ in that diagram does not support a differential in $\Sigma^{24}X_7\w\tmf$ but would map to the class in $(33,1)$ in $\tmfbar\w\tmf$
which we showed does support a differential.
This contradicts the assumption that $d_3(g)=0$.

\end{proof}

Now we begin working toward the construction of our third connective model of $\TMF(3)$.
\begin{prop} \label{Wprop} There is a subcomplex $W_1$ of $\tmfbar$ such that there is a cofibration
$$\Sigma^8bo_1\to W_1\to\Sigma^{16}bo_2$$
which has a short exact sequence in mod-2 cohomology.\end{prop}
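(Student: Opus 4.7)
The plan is to build $W_1$ as a subcomplex of $\tmfbar$, exploiting the $A$-module splitting
\[
H^*(\tmfbar) \cong \bigoplus_{n\ge 1} H^*(\Sigma^{8n} bo_n)
\]
established just before the proposition (valid because $\sq$ preserves weight).  The dimension ranges are crucial:  $M_1$ contributes only in dimensions $8$--$15$, $M_2$ in dimensions $16$--$31$, and $M_3$ starts in dimension $24$ with cells only in dimensions $24, 28, 30, 31$ below dimension $32$.

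First I would fix a minimal CW decomposition of $\tmfbar$ and let $V_1$ be its $15$-skeleton.  In this range only $M_1$ contributes, so as an $A$-module $H^*(V_1)\cong H^*(\Sigma^8 bo_1)$.  Since $bo_1=X_7$ has only four cells, with attaching maps $\nu,\eta,2$ forced by the nonzero $\sq^4,\sq^2,\sq^1$ (together with vanishing of the relevant Toda brackets), $V_1$ is homotopy equivalent to $\Sigma^8 bo_1$.

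Next I would extend $V_1$ to a subcomplex $W_1\subset\tmfbar$ by successively attaching the cells corresponding to basis monomials of $M_2$, in order of increasing dimension, and at each dimension attaching the $M_2$-cell before any $M_3$-cell of the same dimension.  With this choice of cell order, every $M_2$-cell is attached to $V_1$ together with previously-attached $M_2$-cells, so $W_1$ really is a subcomplex of $\tmfbar$ containing $V_1$ as a subcomplex.  Because $V_1\hookrightarrow W_1$ is a CW-inclusion, $H^*(W_1)\to H^*(V_1)$ is surjective and we obtain the required short exact sequence of $A$-modules
\[
0\to H^*(W_1/V_1)\to H^*(W_1)\to H^*(V_1)\to 0,
\]
with $H^*(W_1/V_1)\cong M_2$ by construction.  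A uniqueness argument of the same flavor as for $V_1$ then identifies $W_1/V_1$ with $\Sigma^{16} bo_2$.

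The main obstacle is the third paragraph: one must verify that the attaching map of each $M_2$-cell in dimension $24$--$31$ can genuinely be chosen to land in $V_1$ together with earlier $M_2$-cells, rather than requiring a nontrivial component on the $M_3$-cells in dimensions $24, 28, 30, 31$.  The cohomology-level obstruction is zero by the weight splitting, but a priori higher-filtration components could be present.  What makes this case manageable (unlike the analogous question for $M_3$ inside $\tmfbar\w\tmf$, which by Theorem \ref{nosplit} genuinely fails) is that $M_2$ sits below $M_3$ in dimension across most of the range $16$--$23$, so the obstruction analysis only needs to be carried out in the narrow overlap range; in this range one would check by direct inspection of the ASS for $\tmfbar$ (along the lines of the Bruner-software computations used elsewhere in the paper) that any component of the attaching map along an $M_3$-cell can be cancelled by altering the choice of cellular representative.
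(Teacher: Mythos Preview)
Your approach differs from the paper's, and the gap you identify in your third paragraph is exactly where the paper's argument does its work—but by a different mechanism.

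The paper does not build $W_1$ upward cell by cell. Instead it starts with the full $31$-skeleton $\tmfbar^{(31)}$, which already contains all of $M_1$, all of $M_2$, and the four low-dimensional cells of $M_3$ (in dimensions $24,28,30,31$, dual to $\zeta_1^{24},\zeta_1^{16}\zeta_2^4,\zeta_1^{16}\zeta_3^2,\zeta_1^{16}\zeta_4$). One observes that this $M_3$-piece is exactly $H^*(\Sigma^{24}X_7)$, and that the $A$-module $H^*(\tmfbar^{(31)}/\tmfbar^{(23)})$ splits as the $M_2$-part plus this $\Sigma^{24}M_7$. The paper then constructs a \emph{map of spectra} $\tmfbar^{(31)}/\tmfbar^{(23)}\to\Sigma^{24}X_7$ realizing the projection onto the second summand, and defines $W_1$ as the fiber of the composite $\tmfbar^{(31)}\to\tmfbar^{(31)}/\tmfbar^{(23)}\to\Sigma^{24}X_7$.

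The entire obstruction analysis is thus compressed into a single question: does the obvious filtration-$0$ class in $\ext_A\bigl(D(H^*(\tmfbar^{(31)}/\tmfbar^{(23)}))\otimes M_7\bigr)$ survive the ASS? This is one finite Ext computation (carried out via Bruner's program), replacing your proposed sequence of cell-by-cell obstruction checks in the overlap range $24$--$31$. Your version could presumably be pushed through, but the work you defer (``direct inspection of the ASS for $\tmfbar$'') is not obviously less than the paper's single computation, and the paper's formulation has the advantage that the target $\Sigma^{24}X_7$ is a small, well-understood spectrum.

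A secondary point: your final identification $W_1/V_1\simeq\Sigma^{16}bo_2$ by ``a uniqueness argument of the same flavor as for $V_1$'' is optimistic. For $bo_1=X_7$ the four cells are rigidly determined by $\sq^4,\sq^2,\sq^1$, but $bo_2$ has eight cells and the analogous uniqueness is not a one-line remark. The paper is also terse here, but for its subsequent uses only the cohomology of the cofiber (and hence the ASS $E_2$-term of $W_1\wedge\tmf$) is actually needed.
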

\begin{proof} We use the description of $H_*(\tmfbar)$ given in the second paragraph of this section. All elements of weight $\le16$
are in dimension $\le31$, and the first few elements of weight greater than 16 are $\zeta_1^{24}$, $\zeta_1^{16}\zeta_2^4$, $\zeta_1^{16}\zeta_3^2$,
and $\zeta_1^{16}\zeta_4$. The $A$-module structure of $H^*(\tmfbar^{(31)}/\tmfbar^{(23)})$ is
\begin{equation}\label{spplit}\langle \sq^0,\,\sq^2,\,\sq^3,\,\sq^4,\,\sq^5,\,\sq^6,\,\sq^7\rangle \widehat{\zeta_2^8}\ \oplus\,\langle\sq^0,\,\sq^4,\,\sq^6,\,\sq^7\rangle
\widehat{\zeta_1^{24}},\end{equation}
with the first (resp.~second) summand dual to monomials of weight 16 (resp.~24).
Here the $\widehat{(\quad)}$ represents duality.
Bruner's software shows that there is a map $$\tmfbar^{(31)}/\tmfbar^{(23)}\to \Sigma^{24}X_7$$ which induces the identity
homomorphism from the second summand of (\ref{spplit}) and 0 from the first. This is done by computing $\ext_A$ of the tensor product of
the dual of the module in (\ref{spplit}) with $M_7$, and seeing that there are no possible differentials from the obvious filtration-0 class.
 The desired complex $W_1$ is the fiber of the composite
$$\tmfbar^{(31)}\to \tmfbar^{(31)}/\tmfbar^{(23)}\to \Sigma^{24}X_7,$$
where the second map is the one just noted.\end{proof}

The $E_2$-term of the ASS for $W_1\w\tmf$ in dimension less than 40 is given in Diagram \ref{bo1bo2diag}, and, as established in Theorem
\ref{nosplit}, there are $d_3$-differentials on the classes in positions $(24,0)$, $(33,1)$, $(36,2)$, and $(39,3)$.
Let $f:S^{32}\to W_1\w\tmf$ be a nontrivial map of Adams filtration 1, which exists by Diagram \ref{bo1bo2diag}.
Smash with $\tmf$ and follow by
the multiplication of $\tmf$, obtaining a map $S^{32}\w\tmf\to W_1\w\tmf$.
\begin{defin}
Define $W$ to be the cofiber of this map $S^{32}\w\tmf\to W_1\w\tmf$.\end{defin}
This $W$ will be our third connective model of $\TMF(3)$. Note that, unlike the first two, it is not obtained as the smash product
of a finite complex with $\tmf$, since the above map $f$ does not factor through $W_1$ itself.

Similarly, let $S^{16}\to bo_2\w\tmf$ correspond to essentially the same class, as the open circles in Diagram \ref{bo1bo2diag}
depict the ASS of $\Sigma^{16}bo_2$. Extend this to a map $S^{16}\w\tmf\to bo_2\w\tmf$, and let $\widetilde{bo_2}$ denote the cofiber of this.
There is a cofiber sequence
\begin{equation}\label{Wcof} \Sigma^8bo_1\w\tmf\to W\to\Sigma^{16}\bot.\end{equation}

The short exact sequence of $A$-modules
$$0\to \Sigma^{17}A/\!/A_2\to H^*(\bot)\to A\otimes_{A_2}H^*(bo_2)\to 0$$
induces an exact sequence in $\ext_A$ which implies that $\ext_A(H^*(\bot))$ begins as the 16-desuspension of the open
circles in Diagram \ref{bo1bo2diag} with the portion connected to the element in $(32,1)$ removed. It contains no unpictured
elements in filtration 0 or 1. Therefore,
 $H^*(\bot)=A\otimes_{A_2}B$, where $B$ sits in a short exact sequence of $A_2$-modules
\begin{equation}0\to\Sigma^{17}\zt\to B\to H^*(bo_2)\to 0,\label{Bseq}\end{equation}
with the new class in $B$ equal to $\sq^4\sq^6\sq^7\iota_0$, or equivalently $\sq^4\sq^2\sq^3\iota_8$. It also equals $\sq^2$ of the top class of
$H^*(bo_2)$. The $A_2$-module $B$ cannot be given the structure of $A$-module, as the Adem relation $\sq^2\sq^{15}=\sq^1\sq^{16}+\sq^{16}\sq^1$
would be violated.

Our next result gives a direct relationship among $\ext_{A_2}(A_2/(\sq^4,\sq^{5,1}))$, which was depicted through degree 48 in Diagram \ref{bigdiag}
and is very closely related to the homotopy groups described in Theorem \ref{v1v2thm}, and $\ext_{A_2}(B)$ and $\ext_{A_2}(H^*(X_7))$, which two
together are related to the ASS of $W$. After stating and proving this result, we will use it to determine $\pi_*(W)$ and see that
$v_2^{-1}W$ is another model for $\TMF(3)$.

We begin by noting that $\ext^{s,t}_{A_2}(A_2(\sq^4,\sq^{5,1}))\approx\ext_{A_2}^{s+1,t}(A_2/(\sq^4,\sq^{5,1}))$.
\begin{thm}\label{algthm} Let $\extt_{A_2}(A_2(\sq^4,\sq^{5,1}))$ denote $\ext_{A_2}(A_2(\sq^4,\sq^{5,1}))$ without the $\zt$ in $\ext^{0,4}$
or the tower beginning in $\ext^{1,11}$. There is an exact sequence
$$\to\ext_{A_2}^{s+2,t}(\Sigma^6M_7)\to\extt_{A_2}^{s+2,t}(A_2(\sq^4,\sq^{5,1}))\to\ext_{A_2}^{s,t}(\Sigma^{16}B)\to\ext_{A_2}^{s+3,t}(\Sigma^6M_7)\to.$$
\end{thm}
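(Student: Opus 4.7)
The plan is to realize the asserted long exact sequence as the long exact sequence in $\ext_{A_2}$ of a short exact sequence of $A_2$-modules
$$0\to\Sigma^{16}B\to\widetilde{P}\to\widetilde{M}\to 0,$$
where $\widetilde P$ and $\widetilde M$ are $A_2$-modules with $\ext^{s,t}_{A_2}(\widetilde P)\cong\extt^{s+2,t}_{A_2}(A_2(\sq^4,\sq^{5,1}))$ and $\ext^{s,t}_{A_2}(\widetilde M)\cong\ext^{s+2,t}_{A_2}(\Sigma^6 M_7)$. The filtration shift $+2$ visible in the claim arises from realizing $\widetilde P$ and $\widetilde M$ as iterated syzygies (with certain free summands excised), which shift $\ext$-filtration by $+2$.

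First, I would take $\widetilde M$ to be a second syzygy of $\Sigma^6 M_7$ in a minimal $A_2$-resolution, so that $\ext^s_{A_2}(\widetilde M)\cong\ext^{s+2}_{A_2}(\Sigma^6 M_7)$ by the standard syzygy-shift. For $\widetilde P$, I would start with the minimal resolution of $A_2/(\sq^4,\sq^{5,1})$ in the proof of Theorem \ref{extCthm}, pass to its third syzygy (whose $\ext$ shifts filtration by $+3$ on $A_2/(\sq^4,\sq^{5,1})$, equivalently by $+2$ on $A_2(\sq^4,\sq^{5,1})$), and then excise precisely those two free summand contributions that the tilde removes. Namely, the $\Sigma^4A_2$ summand of $C_1$ contributes the $\zt$ at $\ext^{0,4}(A_2(\sq^4,\sq^{5,1}))$ dual to the generator $\sq^4$, while the $\Sigma^{11}A_2/(\sq^1,\sq^5)$ summand of $C_2$ contributes (through iterated connecting maps) the $bsp_*$-tower at $\ext^{1,11}$. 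After excision, $\ext^{s,t}_{A_2}(\widetilde P)\cong\extt^{s+2,t}_{A_2}(A_2(\sq^4,\sq^{5,1}))$ as required.

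Second, I would construct the short exact sequence $0\to\Sigma^{16}B\to\widetilde P\to\widetilde M\to 0$ explicitly, using the short exact sequence (\ref{Bseq}) to write $B$ as an extension of $H^*(bo_2)$ by a single class in dimension $17$, together with the Brown--Gitler description of $H^*(bo_2)$ given earlier in the section. The main obstacle is precisely this construction: one must identify how $\Sigma^{16}B$ embeds into the excised third syzygy $\widetilde P$ and verify that the quotient is isomorphic to $\widetilde M$. This amounts to a diagram chase comparing two resolutions of closely related $A_2$-modules, directly analogous to the diagram chase used in the proof of Theorem \ref{piZ} to derive (\ref{Zseq2}). Once the short exact sequence is in place, applying $\ext_{A_2}$ and using the filtration-shift identifications above immediately yields the claimed long exact sequence, with the connecting homomorphism shift of $+3$ in the claim coming from combining the standard $+1$ of the short exact sequence with the $+2$ syzygy shift.
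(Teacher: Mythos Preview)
Your overall strategy---realize the long exact sequence from a short exact sequence of $A_2$-modules after suitable syzygy shifts---is the right idea, and you correctly identify that the excised $\zt$ and the excised tower come from the $\Sigma^4A_2$ summand of $C_1$ and the $\Sigma^{11}A_2/(\sq^1,\sq^5)$ summand of $C_2$, respectively. But there are two genuine problems.

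First, the exact sequence in the proof of Theorem~\ref{extCthm} is \emph{not} a free (or projective) $A_2$-resolution: the terms $C_i$ involve $A_2/\!/A_1$, $A_2/(\sq^1,\sq^5)$, $A_2/(\sq^3)$, etc. So taking its ``third syzygy'' $\ker(d_2)$ does not shift $\ext_{A_2}$ by~$3$ in the way you claim; the spectral sequence there builds $\ext$ from $\bigoplus\phi^i\ext(\Sigma^{-i}C_i)$, which is very different. Your $\widetilde P$ is therefore not well-defined as stated, and ``excising free summand contributions'' is not an operation on modules.

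Second, and more seriously, you acknowledge that constructing the short exact sequence $0\to\Sigma^{16}B\to\widetilde P\to\widetilde M\to 0$ is ``the main obstacle'' but then do not do it; the analogy with Theorem~\ref{piZ} is suggestive but not a proof.

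The paper's approach bypasses both difficulties by writing down two explicit exact sequences of $A_2$-modules with explicit maps:
\[
0\to K\ \mapright{i}\ \Sigma^4A_2\ \longrightarrow\ A_2(\sq^4,\sq^{5,1})\ \mapright{\phi}\ \Sigma^6M_7\to 0,
\]
\[
0\to \Sigma^{16}B\ \longrightarrow\ \Sigma^{11}A_2/\!/A_0\ \longrightarrow\ K\to 0,
\]
where $\phi(\sq^{5,1})$ generates $\Sigma^6M_7$, $i(K)$ is generated by $\sq^7\iota_4$, and the generators of $\Sigma^{16}B$ map to $\sq^5\iota_{11}$ and $\sq^{4,6,3}\iota_{11}$. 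Setting $R=\ker\phi=\coker(i)$, the long exact sequence of the short exact sequence $0\to R\to A_2(\sq^4,\sq^{5,1})\to\Sigma^6M_7\to 0$ gives
\[
\to\ext^{s+2}(\Sigma^6M_7)\to\ext^{s+2}(A_2(\sq^4,\sq^{5,1}))\to\ext^{s+2}(R)\to\ext^{s+3}(\Sigma^6M_7)\to,
\]
and the two short exact sequences $0\to K\to\Sigma^4A_2\to R\to 0$ and $0\to\Sigma^{16}B\to\Sigma^{11}A_2/\!/A_0\to K\to 0$ give $\ext^s(\Sigma^{16}B)\cong\ext^{s+1}(K)\cong\ext^{s+2}(R)$ once one removes exactly the contributions of $\ext(\Sigma^4A_2)$ (the $\zt$ in $\ext^{0,4}$) and $\ext(\Sigma^{11}A_2/\!/A_0)$ (the $h_0$-tower beginning in $\ext^{1,11}$). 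Substituting yields the theorem. This is essentially what your sketch would have to become, but with the hypothetical $\widetilde P$ replaced by the concrete intermediate module $R$, and the ``excisions'' made precise as the $\ext$-contributions of the explicit intermediate modules $\Sigma^4A_2$ and $\Sigma^{11}A_2/\!/A_0$.
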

\begin{proof} One can verify that there is an exact sequence of $A_2$-modules
$$0\to K\mapright{i} \Sigma^4 A_2\to A_2(\sq^4,\sq^{5,1})\mapright{\phi} \Sigma^6M_7\to 0,$$
where $\Sigma^6M_7$ is generated by $\phi(\sq^{5,1})$, and $i(K)$ is the submodule of $\Sigma^4A_2$ generated
by $\sq^7\iota_4$, and that there is a short exact sequence of $A_2$-modules
$$0\to\Sigma^{16}B\to \Sigma^{11}A_2/\!/A_0\to K\to 0$$
with $B$ as above, and the $A_2$-generators of $\Sigma^{16}B$ mapping to $\sq^5\iota_{11}$ and $\sq^{4,6,3}\iota_{11}$.

Let $R=\coker(i)=\ker(\phi)$. Except for the classes omitted in forming $\extt$, we have isomorphisms
$$\ext_{A_2}^{s}(\Sigma^{16}B)\mapright{\approx}\ext_{A_2}^{s+1}(K)\mapright{\approx}\ext_{A_2}^{s+2}(R)$$
and an exact sequence
$$\to\ext_{A_2}^{s+2}(\Sigma^6M_7)\to\ext_{A_2}^{s+2}(A_2(\sq^4,\sq^{5,1}))\to\ext_{A_2}^{s+2}(R)\to\ext_{A_2}^{s+3}(\Sigma^6M_7),$$
from which the result follows.
\end{proof}

Similarly to Theorem \ref{piZ}, we can now deduce the following result without using complete information about $\ext_{A_2}(B)$.
\begin{thm}\label{piW}
There is an isomorphism of graded abelian groups
$$\pi_*(W)\approx K'\oplus\zt[v_2^8]\langle \,x,\,\eta x,\,\nu x,\,x^2,\,\nu x^2,\,v_1v_2x^2,\,v_2^8 \nu,\,v_2^8\nu^2\rangle,$$
where
$$K'=\ker(R'\to\zt[v_2^8]\langle v_2^4\rangle )$$
with $R'$ the subgroup of the ring $R$ of \ref{v1v2thm} spanned by all elements divisible by $v_2$ but not including the cyclic group
generated by $2v_2^2$.
\end{thm}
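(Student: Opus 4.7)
The plan parallels the proof of Theorem \ref{piZ}, using the cofibration (\ref{Wcof}) in place of the defining cofibration of $Z$, and splicing with Theorem \ref{algthm} in place of the algebraic input used there. First I would invoke (\ref{Wcof}) and apply change-of-rings to obtain the long exact sequence
$$\to\ext_{A_2}^{s,t}(\Sigma^8 M_7)\to E_2^{s,t}(W)\to\ext_{A_2}^{s,t}(\Sigma^{16}B)\to\ext_{A_2}^{s+1,t}(\Sigma^8 M_7)\to,$$
using $bo_1=X_7$ so that $H^*(bo_1)=M_7$, and $H^*(\bot)=A\otimes_{A_2}B$ via (\ref{Bseq}). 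The connecting homomorphism must be nontrivial on the generator coming from the filtration-$1$ class in dimension $32$ that defined $W$ as a cofiber, in accordance with the $d_3$ of Theorem \ref{nosplit}; this both kills certain elements of $\ext_{A_2}(\Sigma^{16}B)$ and produces a new $\bot$-quotient contribution in stems $51$ and $54$.

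Next I would splice this with the exact sequence of Theorem \ref{algthm}. Since $\extt_{A_2}(A_2(\sq^4,\sq^{5,1}))$ is essentially Diagram \ref{bigdiag} (with the two explicitly-removed classes, via the degree shift recorded just before \ref{algthm}), a diagram chase formally identical to the one in the proof of Theorem \ref{piZ} should identify $E_2^{*,*}(W)$ with the chart of Diagram \ref{bigdiag} modified by deleting the initial $\zt$'s together with the $bo_*$-summand through $1,2v_1^2,v_1^4,\ldots$, deleting the $bsp_*[v_2^4]$-summand starting on $2v_2^2$, and adjoining two new $v_2^8$-shifted $\zt$-towers in stems $51$ and $54$ labelled $v_2^8\nu$ and $v_2^8\nu^2$. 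Adams differentials and exotic extensions are then ruled out by $h_i$-naturality exactly as in Theorem \ref{piXtmf}, since every surviving summand is an $h_i$-multiple of a class that already survived in $\pi_*(X\w\tmf)$. Comparing (\ref{Wcof}) with the cofibration of \ref{GMthm}(d) produces a $\tmf$-module map $W\to X\w\tmf$ that transports multiplicative names: $K'$ enumerates precisely the $v_2$-divisible elements of $R$ whose corresponding generators in Diagram \ref{bigdiag} survive the two deletions, and the six $\zt[v_2^8]$-classes $x,\eta x,\nu x,x^2,\nu x^2,v_1v_2x^2$ carry over unchanged from \ref{v1v2thm}.

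The main obstacle I expect is the diagram chase itself: one must trace carefully, through the interaction of the connecting map of Theorem \ref{algthm} with that of (\ref{Wcof}) and with the killing of the filtration-$1$ class defining $W$, exactly which classes of Diagram \ref{bigdiag} survive. In particular, verifying that precisely the cyclic $\bz\cdot 2v_2^2$-portion (and not the other $bsp_*$-translates coming from $v_1$-multiplication) is the part of the $bsp_*[v_2^4]$-summand removed, will require inspection of the explicit form of the map $d_2$ in the resolution underlying \ref{extCthm} combined with the identification of the connecting map $\delta$ in Theorem \ref{algthm} on the relevant generator.
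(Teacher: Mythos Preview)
Your broad strategy—use (\ref{Wcof}) and compare with Theorem \ref{algthm}—matches the paper's. But the splicing mechanism you propose does not work as stated. Your long exact sequence, coming from the cohomology short exact sequence of (\ref{Wcof}), has connecting map $\ext_{A_2}^{s,t}(\Sigma^{16}B)\to\ext_{A_2}^{s+1,t}(\Sigma^8 M_7)$ with filtration shift $1$, whereas the connecting map in Theorem \ref{algthm} is $\ext_{A_2}^{s,t}(\Sigma^{16}B)\to\ext_{A_2}^{s+3,t}(\Sigma^6M_7)$ with filtration shift $3$. These land in different bidegrees, so the two exact sequences cannot be identified by the diagram-chase argument of Theorem \ref{piZ}; they are not ``the same sequence'' in the way the two sequences in \ref{piZ} were.

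The paper instead works with the \emph{geometric} connecting map $\Sigma^{15}\bot\to\Sigma^8bo_1\w\tmf$. The key observation you are missing is that this spectrum map has Adams filtration $3$: one checks that $H^i(\Sigma^{15}\bot)=0$ for $i<15$ and for $i=17,18,20$, which are precisely the dimensions where $\pi_i(\Sigma^8bo_1\w\tmf)$ has classes of filtration below $3$. This yields a map $\ext_{A_2}^{s,t}(\Sigma^{15}B)\to\ext_{A_2}^{s+3,t+3}(\Sigma^8M_7)$, which after regrading has the same shape as the boundary in \ref{algthm}. The paper then shows the two coincide by a rigidity argument rather than a diagram chase: a chain map from the third stage $C_3$ of a minimal resolution of $\Sigma^8 M_7$ into $\Sigma^{15}B$ must vanish on the generators in degrees $8,12,20$ (these land in $B_{<0}$ or in $B_5=0$) and must be nonzero on the degree-$23$ generator (since both Ext morphisms are nontrivial on $\ext^{0,24}(\Sigma^{16}B)$—the geometric one by Theorem \ref{nosplit}, the algebraic one because Diagram \ref{bigdiag} vanishes at $(21,3)$). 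This pins down the morphism uniquely, and one concludes $E_2^{s,t}(W)\approx\extt_{A_2}^{s,t-2}(A_2(\sq^4,\sq^{5,1}))$ directly. (A small aside: $v_2^8\nu$ and $v_2^8\nu^2$ are not new $\bot$-contributions in stems $51,54$; they are simply the first $v_2^8$-translates of $\nu,\nu^2$ that lie above the truncation.)
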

\begin{proof}
The map $\Sigma^{15}\bot\to\Sigma^8bo_1\w\tmf$ whose cofiber is $W$ has Adams filtration 3 since $H^i(\Sigma^{15}\bot)=0$ for $i<15$ and for
$i=17$, $18$, and $20$,
the values of $i$ for which $\pi_i(\Sigma^8bo_1\w\tmf)$ has nonzero classes in filtration less than 3.
We obtain a homomorphism
$$\ext_{A_2}^{s,t}(\Sigma^{15}B)\to\ext_{A_2}^{s+3,t+3}(\Sigma^8M_7).$$
We claim that this is the same homomorphism as the one at the end of the exact sequence in Theorem \ref{algthm}.

Both of them are nontrivial on the class in $\ext_{A_2}^{0,24}(\Sigma^{16}B))$, the first by Theorem \ref{nosplit} and the second
since Diagram \ref{bigdiag} is 0 in position $(21,3)$.
Let $\bc$ (resp.~$\bd$) be a minimal $A_2$-resolution of $\Sigma^8M_7$ (resp.~$\Sigma^{15}B$). There is a morphism
$C_3\mapright{\phi} \Sigma^{15}B$ which lifts to a morphism $C_3\to D_0$ and then to $C_{s+3}\to D_s$ for all $s$.
Since $B_5=0$, $\phi$ must be 0 on the generators in 8, 12, and 20, and it must send the generator in 23 nontrivially to get the
correct Ext morphism. This completely determines the entire Ext morphism. The same is true of the Ext morphism at the end
of the sequence of \ref{algthm}. Thus the two Ext morphisms are equal.

We obtain that $E_2^{s,t}(W)\approx\extt^{s,t-2}_{A_2}(A_2(\sq^4,\sq^{5,1}))$. We have already seen that there are no possible
differentials in an ASS with $E_2\approx \extt_{A_2}(A_2(\sq^4,\sq^{5,1}))$. Thus $\pi_*(W)$ is like the groups described in Theorem
\ref{v1v2thm} without the initial $bo_*$, $\nu$, $\nu^2$, or the $2v_2^2$-tower.
\end{proof}

Similarly to Corollary \ref{Xcor}, we obtain the following result, giving a third connective model for $\TMF(3)$.
The significance of this one is its close relationship to $\tmf$.
\begin{cor}\label{Wcor} There is an equivalence $v_2^{-1}W\to \TMF(3)$.\end{cor}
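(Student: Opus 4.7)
The plan is to follow the template of Corollary \ref{Xcor}: first produce a map $W \to \TMF(3)$, then use the calculations of $\pi_*(W)$ and $\pi_*(\TMF(3))$ to show the induced map becomes an equivalence after inverting $v_2$. The novelty compared to the $Z$-case is that $W$ is not of the form (finite complex) $\w\,\tmf$, so some care is needed.

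First I would construct a $\tmf$-module map $W \to \TMF(3)$. The spectrum $W$ is a $\tmf$-module since it is the cofiber of the $\tmf$-module map $S^{32}\w\tmf \to W_1\w\tmf$, and $\TMF(3)$ is a $\tmf$-module by the fact cited in the proof of Theorem \ref{equiv}. It therefore suffices to build a map $W_1 \to \TMF(3)$, smash with $\tmf$ and compose with the module structure, and then verify that the resulting composite $S^{32}\w\tmf \to W_1\w\tmf \to \TMF(3)$ is null so that the map extends over $W$. To build $W_1 \to \TMF(3)$, I would use the cofibration $\Sigma^8 bo_1 \to W_1 \to \Sigma^{16}bo_2$ of Proposition \ref{Wprop}: pick the map on $\Sigma^8 bo_1 = \Sigma^8 X_7$ detecting the analogue of $x$ already present in $\TMF(3)$ (using Proposition \ref{toTMF3} and the $\tmf$-module structure on $\TMF(3)$ pulled back along $\Sigma^8 X_7 \hookrightarrow \Sigma^8 X_7\w\tmf$), then extend across the $\Sigma^{16} bo_2$-cells using that the relevant obstructions land in homotopy groups of $\TMF(3)$ that are zero, visible from Diagram \ref{TMF3diag}. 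The nullness of the extending composite from $S^{32}\w\tmf$ follows from the same vanishing in $\pi_*(\TMF(3))$ in the appropriate range.

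Next I would invert $v_2$ and compare. By Theorem \ref{piW}, $\pi_*(W)$ consists of the classes $x,\eta x,\nu x,x^2,\nu x^2,v_1v_2 x^2,v_2^8\nu,v_2^8\nu^2$ (each multiplied freely by powers of $v_2^8$) together with the group $K'$ spanned by the $v_2$-divisible monomials in the $v_1,v_2,\eta$ ring $R$ of Theorem \ref{v1v2thm} (omitting the $2v_2^2$-tower). By \cite[4.1]{MR} interpreted as in the proofs of \ref{equiv} and \ref{Xcor}, $\pi_*(\TMF(3))$ is the $v_2$-inverted version of the ring of Theorem \ref{v1v2thm}. After inverting $v_2^8$ (equivalently $v_2$), every monomial in $R$ that is \emph{not} $v_2$-divisible (i.e. the $v_1^{4i}$-tower and the $2v_1^{4i+2}$-tower omitted in $K'$) becomes equal to $v_2^{-8}$ times one that is; similarly $\nu = v_2^{-8}(v_2^8\nu)$ and $\nu^2 = v_2^{-8}(v_2^8\nu^2)$. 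Hence the natural map $\pi_*(W)[v_2^{-1}] \to \pi_*(\TMF(3))$ is a bijection on bases once we know the named classes of $\pi_*(W)$ land on the like-named classes of $\pi_*(\TMF(3))$.

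The final step is to verify that the named-class matching holds, and this is where the main work lies. The class $x$ maps to $x$ by construction. The argument of Theorem \ref{equiv} then carries over almost verbatim: using $\tmf_*$-linearity, the relation $v_1^4 x = \eta v_1^3 v_2^3$ (valid in both $\pi_*(W)$ by \ref{v1v2thm} and in $\pi_*(\TMF(3))$ by \cite[4.1]{MR}) propagates $x$ to the odd-odd monomials $v_1^i v_2^j$; the Toda bracket $2v_1^5 v_2^3 = \langle \eta^2 v_1^3 v_2^3,\eta,2\rangle$ fills in the rest of the odd-odd classes; the $\overline\kappa$-action sending $x \mapsto \nu x^2$ (as in \cite[4.1]{MR}) delivers $x^2$, hence via the bracket $2v_2^6 = \langle x^2,\eta,2\rangle$ the even-even classes; and $v_2^8\nu,v_2^8\nu^2$ come along for free once $v_2^8$ is in the image. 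Multiplying through by $v_2^{-1}$ gives the equivalence. The main obstacle, as in \ref{toTMF3}, is the bookkeeping of obstructions when extending the cellular map $W_1 \to \TMF(3)$; but this is a finite check against Diagram \ref{TMF3diag} using the cell structure of $W_1$ coming from \ref{Wprop}.
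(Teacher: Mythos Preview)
Your route is genuinely different from the paper's, and the construction step has real gaps.

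The paper does not attempt to build a map $W\to\TMF(3)$ directly. Instead, exactly as in Corollary~\ref{equcor}, it constructs a map $Z\to W$ by extending the class $x\in\pi_{17}(W)$ over the cells of $Z$ (the obstructions live in $\pi_i(W)$ for $i\in\{19,23,27,29,30\}$, all of which vanish by Theorem~\ref{piW}), then uses the $\tmf$-module structure of $W$ to extend to $Z\w\tmf\to W$. Comparing Theorems~\ref{piZ} and~\ref{piW} shows this becomes an equivalence after inverting $v_2$, and Theorem~\ref{equiv} finishes. This is short precisely because $Z$ was designed for such arguments; it also sidesteps entirely the issue that $W$ is not a finite complex smashed with $\tmf$.

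Your direct construction runs into two problems. First, to extend a map $\Sigma^8 bo_1\to\TMF(3)$ over $W_1$ you must kill obstructions in $\pi_{i-1}(\TMF(3))$ for the cell dimensions $i$ of $\Sigma^{16}bo_2$ (namely $i\in\{16,20,22,23,24,26,27,28,29,30,31\}$); several of the relevant groups are nonzero (for instance $\pi_{28}(\TMF(3))$ contains a tower), so the appeal to Diagram~\ref{TMF3diag} for blanket vanishing is incorrect. Second, and more seriously, to descend from $W_1\w\tmf$ to $W$ you need the composite $S^{32}\to W_1\w\tmf\to\TMF(3)$ to be null, but $\pi_{32}(\TMF(3))\ne0$ (it contains $v_1v_2^5$ and more), so again this is not a vanishing argument: you would have to track the specific filtration-$1$ class $f$ through your map, and nothing in your construction controls where it lands. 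Relatedly, the bottom cell of $\Sigma^8 bo_1$ sits in dimension~$8$, so what it can detect is $v_1v_2$, not $x$; hence ``$x$ maps to $x$ by construction'' is not actually built into your map, and the subsequent naturality argument borrowed from the proof of Theorem~\ref{equiv} lacks its starting point.
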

\begin{proof} Similarly to \ref{equcor}, we construct a map $Z\to W$, then use the $\tmf$-module structure of $W$ to
extend to a map $Z\w\tmf\to W$. This becomes an equivalence after inverting $v_2$. Then we use Theorem \ref{equiv}.\end{proof}

\section{$\tmf(3)$-homology of real projective space}\label{Psec}
In this section, we compute $\pi_*(X\w\tmf\w P_1)$, where $X$ is as in Theorem \ref{GMthm} and $P_1=RP^\infty$. Because $X\w\tmf$
is probably the best connective model for $\TMF(3)$, this could be considered as $\tmf(3)_*(P_1)$. More work will be required to deduce
results for $P_n$ or $P_1^m$ from this, but this should provide a model. One possible application of this calculation would be
to obstruction theory, which was an initial motivation for this project.

It is convenient to state and prove the result for $\Sigma P_1$. Some of the $\tmf_*$-module structure
is included in the result. We now state the main theorem of this section. Although it is not exactly an ASS, we describe the
groups in an ASS-like way, with bigrading $(i,s)$ for an element of $\pi_i(X\w\tmf\w \Sigma P_1)$ of filtration $s$.
Many elements are expressed as $a^{e_1}v_2^{e_2}$ of bigrading $(2e_1+6e_2,e_2)$. Thus $a$ (resp.~$v_2$) is thought of as having
bigrading $(2,0)$ (resp.~$(6,1)$), although $a$ and $v_2$ themselves are not actually elements of $\pi_*(X\w\tmf\w\Sigma P_1)$.
Certain powers of $v_2$ can be thought of as being part of the $\tmf_*$-module structure.
Note that the elements $a^{e_1}v_2^{e_2}$ are not really products, since $X\w\tmf\w \Sigma P_1$ is not a ring spectrum.
The element $a$ roughly corresponds to
$v_1/2$.

\begin{thm} \label{P1thm} For each pair $(e_1,e_2)$ such that $e_1>0$, $e_2\ge0$, and $e_1\equiv e_2\ (2)$, $\pi_*(X\w\tmf\w\Sigma P_1)$ has a summand $\Z/2^{e_1}$
generated by
\begin{equation}\label{cas}\begin{cases} a^{e_1}v_2^{e_2}&\text{if }e_1\equiv e_2\ (4)\\
2a^{e_1}v_2^{e_2}&\text{if }e_1\equiv e_2+2\ (4),\end{cases}\end{equation}
with the following two variations:
\begin{itemize}
\item if $e_1=2$ and $e_2\equiv0\ (8)$, it is $\Z/8$ generated by $a^2v_2^{e_2}$;
\item if $e_1=1$ and $e_2\equiv1\text{ or }3\ (8)$, it is $\Z/4$ generated by $av_2^{e_2}$.
\end{itemize}
If $e_1\ge5$ and $e_1\equiv e_2\ (4)$, or if $(e_1,(e_2\text{ mod }8))=(4,0)$ or $(3,3)$, then $\eta^2 a^{e_1}v_2^{e_2}\ne0$.
If $(e_1,(e_2\text{ mod }8))=(1,1)$, $(4,4)$, $(2,6)$, or $(3,7)$, then $\eta a^{e_1}v_2^{e_2}\ne0$.

If $e_1\ge3$ and $e_1\equiv e_2+2\ (4)$, or $e_1=2$ and $e_2\equiv0\ (8)$, then there exists $b_{e_1,e_2}$ of bigrading
$(e_1+e_2-2,2e_1+6e_2-2)$ and order 2 satisfying $\eta^2b_{e_1,e_2}=2^{e_1}a^{e_1}v_2^{e_2}$. If $(e_1,(e_2\text{ mod }8))=(1,3)$
or $(2,4)$, there exists $b'_{e_1,e_2}$ of bigrading $(e_1+e_2-1,2e_1+6e_2-1)$ and order 2 satisfying $\eta b'_{e_1,e_2}
=2^{e_1}a^{e_1}v_2^{e_2}$.

In addition, there are the following $\zt$ classes $x_{i,s}$ of bigrading $(i,s)$.\footnote{Note that the subscripts of
$x$ refer to bigrading, while the subscripts of $b$ and $b'$ do not.}
\begin{itemize}
\item $x_{8i+2,1}$ for $i\ge1$.

All the rest are acted on freely by $v_2^8$.

\item $x_{5,1}=\nu b_{2,0}$,\ $x_{7,1}=\nu a^2$;
\item $x_{6,1}$ satisfying $\nu x_{6,1}=\eta av_2$;
\item $x_{21,3}$ and $\nu x_{21,3}$;
\item $x_{22,4}=\nu b'_{1,3}$, \ $x_{23,4}=\nu av_2^3$;
\item $x_{36,6}$ and $\nu x_{36,6}$,\ $x_{37,6}$ and $\nu x_{37,6}$;
\item $x_{38,6}$ satisfying $\nu x_{38,6}=\eta a^2v_2^6$.
\end{itemize}
\end{thm}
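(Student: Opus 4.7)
The plan is to exploit the cofibration $\Sigma^{-1}C \to X\w\tmf \to bo$ from Theorem \ref{GMthm}(d), which when smashed with $\Sigma P_1$ yields the long exact sequence
$$\cdots \to \pi_*(\Sigma^{-1}C\w\Sigma P_1) \to \pi_*(X\w\tmf\w\Sigma P_1) \to \pi_*(bo\w\Sigma P_1) \to \cdots$$
This parallels the decomposition used to prove Theorem \ref{piXtmf}, so I would compute each end and reassemble. First, $\pi_*(bo\w\Sigma P_1)$ is classical: using the $A_1$-resolution of $H^*(\Sigma P_1)$ and the well-known $ko$-homology of $RP^\infty$, this contributes the bulk of the cyclic summands $\Z/2^{e_1}$ generated by the $a^{e_1}$-classes at $e_2=0$, together with the exceptional $\Z/8$ at $(e_1,e_2)=(2,0)$ and $\Z/4$ at $e_1=1$, which are the classical exotic extensions in $ko_*(RP^\infty)$.

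Second, I would compute $\pi_*(\Sigma^{-1}C\w\Sigma P_1)$ by running an ASS with
$$E_2 \approx \ext_{A_2}^{s,t}\bigl(\Sigma^{3}A_2/(\sq^4,\sq^{5,1})\otimes H^*(\Sigma P_1)\bigr),$$
using Change of Rings since $H^*(\tmf)\approx A/\!/A_2$. The eight-term $A_2$-resolution written out in the proof of Theorem \ref{extCthm} can be tensored with a minimal $A_2$-resolution of $H^*(\Sigma P_1)$; each building block ($A_2,\ A_2/\!/A_1,\ A_2/(\sq^1,\sq^5),\ A_2/(\sq^3),\ldots$) contributes $\zt$-, $bo_*$-, $bsp_*$- or $\ker$-type columns already identified in Diagram \ref{bigdiag}, now acting on $\Ext_{A_2}(H^*(\Sigma P_1))$. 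The $v_2^4$- and $v_2^8$-periodicities propagate automatically through $\Sigma^{56}C_i\cong C_{i+8}$. This computation produces the remaining $a^{e_1}v_2^{e_2}$-towers with $e_2>0$ and the sporadic $\zt$-classes $x_{i,s}$, $b_{e_1,e_2}$, $b'_{e_1,e_2}$.

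Assembling the two halves via the long exact sequence gives the additive structure claimed. Differentials in the resulting Adams-type spectral sequence are excluded by $h_1$- and $h_2$-naturality, exactly as in Theorem \ref{piXtmf}. The $\eta$- and $\nu$-actions listed in the theorem come from three sources: the actions on $\pi_*(X\w\tmf)$ already determined in Theorem \ref{v1v2thm}, the $\eta$-relations forced by the attaching maps of $P_1$ (so that $2 = \eta a$ effectively), and the Toda-bracket identities $2a^{e_1}v_2^{e_2} = \langle \eta,2,\eta a^{e_1-1}v_2^{e_2}\rangle$ that propagate the cyclic-order extensions up the $a$-towers. The exotic elements $b_{e_1,e_2}$ and $b'_{e_1,e_2}$ are detected by boundary elements in the long exact sequence above and must satisfy $\eta^2 b = 2^{e_1}a^{e_1}v_2^{e_2}$ or $\eta b' = 2^{e_1}a^{e_1}v_2^{e_2}$ by matching filtrations.

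The main obstacle is the final bookkeeping: while the $E_\infty$-page is forced by $h_i$-naturality and the ring structure of Theorem \ref{v1v2thm}, determining the precise orders $2^{e_1}$ of the cyclic summands and verifying each of the listed $\eta$-, $\nu$-relations (especially for the scattered $\zt$-classes like $x_{6,1}$, $x_{38,6}$, and the $v_2^8$-free but otherwise isolated $x_{21,3}$, $x_{36,6}$, $x_{37,6}$) is a delicate case-by-case affair. Each such sporadic class traces back to a specific generator of one of the $C_i$ in the resolution of Theorem \ref{extCthm}, and its Massey-product relations with $\eta$ and $\nu$ must be extracted by Yoneda products, analogous to (but more intricate than) those carried out in the proof of Theorem \ref{v1v2thm}.
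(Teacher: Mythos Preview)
Your overall framework is right---the paper also uses the cofibration of Theorem \ref{GMthm}(d) smashed with $P_1$, giving the long exact sequence (\ref{PCseq}). But there are two genuine gaps.

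First, your plan for computing $\ext_{A_2}(\Ct\ot H^*P_1)$ is not workable as stated. Tensoring the resolution of $\Ct$ with $H^*P_1$ gives a spectral sequence whose terms are $\ext_{A_2}(C_i\ot H^*P_1)$, not ``$bo_*$- or $bsp_*$-columns acting on $\ext_{A_2}(H^*(\Sigma P_1))$''; for instance $\ext_{A_2}((A_2/\!/A_1)\ot H^*P_1)\approx\ext_{A_1}(H^*P_1)$ by change of rings, which is a quite different object. The paper does use this spectral sequence (equation (\ref{phisum})), but finds it inadequate on its own for reading off the $\eta^2$-hooks at the bottoms of towers. The device you are missing is the auxiliary $A_2$-module $Q$, an extension of $H^*P_1$ built from shifted copies of $A_2/\!/A_1$, so that $\ext_{A_2}(\Ct\ot Q)\approx\bigoplus_{i\ge-1}\ext_{A_1}(\Sigma^{8i-1}\Ct)$ collapses to copies of the simple chart in Diagram \ref{Ctdiag}. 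This is what makes the tower heights and the $b$-, $b'$-classes visible. A further subtlety the paper explicitly addresses is matching the bottoms of $h_0$-towers (seen via $Q$) with their tops (seen via $v_2^8$-periodicity from low-dimensional Bruner computations); this requires playing the two spectral sequences off against each other and is not automatic.

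Second, your assertion that differentials are excluded by $h_i$-naturality is not correct for the long exact sequence (\ref{PCseq}). The paper's Theorem \ref{XPCthm} shows that the map $g_*:bo_{8i+3}(P_1)\to\pi_{8i+3}(C\w P_1)$ is nontrivial (a genuine $d_1$ in the assembled chart), and that there is a nontrivial $\eta$-extension from the $bo$-part into the $C$-part at $\gamma_{8i}$. Both are proved by explicit cohomology computations tracking $\sq^4$ under the change-of-rings isomorphism $A/\!/A_1\ot H^*P_1\approx A\ot_{A_1}H^*P_1$, not by Toda brackets; and ruling out a possible differential on $\gamma_{8i}$ when $i\equiv5$ mod $8$ requires passing to the cofiber of the Kahn--Priddy map. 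These are precisely the phenomena responsible for the exceptional $\Z/8$ and $\Z/4$ summands and for the classes $x_{8i+2,1}$, so they cannot be waved away.
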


In Diagrams \ref{P1diag1} and \ref{P1diag2} we depict the groups of Theorem \ref{P1thm}.
 All elements except
those in position $(8i+2,1)$ for $i\ge1$ in Diagram \ref{P1diag1} are acted on freely by $v_2^8$.

\medskip
\begin{minipage}{6.5in}
\begin{diag}\label{P1diag1}{$\pi_*(X\w\tmf\w \Sigma P_1)$ in $*<32$}
\begin{center}
\begin{picture}(480,280)(80,0)
\def\mp{\multiput}
\def\elt{\circle*{3}}
\def\cir{\circle{3}}
\put(0,18){\line(1,1){36}}
\mp(0,18)(18,18){3}{\elt}
\put(36,18){\line(0,1){36}}
\mp(36,18)(0,18){3}{\elt}
\put(108,18){\line(0,1){54}}
\mp(108,18)(0,18){4}{\elt}
\put(108,18){\line(1,1){36}}
\mp(126,36)(18,18){2}{\elt}
\put(36,18){\line(3,1){108}}
\put(90,36){\elt}
\put(111,36){\line(0,1){18}}
\put(111,36){\line(1,1){17}}
\mp(111,36)(17,17){2}{\elt}
\put(111,54){\elt}
\put(74,35){\elt}
\put(74,35){\line(3,1){54}}
\put(2,16){\line(3,1){108}}
\put(56,34){\elt}
\put(144,36){\elt}
\put(147,54){\line(1,1){32}}
\mp(147,54)(16,16){3}{\elt}
\put(179,54){\line(0,1){32}}
\mp(179,54)(0,16){3}{\elt}
\put(182,36){\line(0,1){90}}
\mp(182,36)(0,18){6}{\elt}
\put(146,90){\line(1,1){36}}
\mp(146,90)(18,18){2}{\elt}
\put(249,18){\line(0,1){126}}
\mp(249,18)(0,18){8}{\elt}
\put(249,18){\line(1,1){36}}
\mp(267,36)(18,18){2}{\elt}
\put(285,36){\elt}
\put(252,36){\line(0,1){72}}
\mp(252,36)(0,18){5}{\elt}
\put(252,36){\line(1,1){34}}
\mp(269,53)(17,17){2}{\elt}
\put(255,54){\line(0,1){18}}
\mp(255,54)(0,18){2}{\elt}
\put(329,36){\line(0,1){162}}
\mp(329,36)(0,18){10}{\elt}
\put(291,160){\line(1,1){38}}
\mp(291,160)(19,19){2}{\elt}
\put(326,54){\line(0,1){108}}
\mp(326,54)(0,18){7}{\elt}
\mp(290,126)(18,18){2}{\elt}
\put(290,126){\line(1,1){36}}
\put(323,72){\line(0,1){54}}
\mp(323,72)(0,18){4}{\elt}
\put(289,92){\line(1,1){34}}
\mp(289,92)(17,17){2}{\elt}
\put(320,72){\line(0,1){17}}
\mp(320,72)(0,17){2}{\elt}
\put(304,73){\elt}
\put(304,73){\line(1,1){16}}
\put(304,73){\line(3,1){54}}
\put(358,91){\elt}
\mp(345,74)(42,14){2}{\elt}
\put(345,74){\line(3,1){42}}
\put(390,18){\line(0,1){198}}
\mp(390,18)(0,18){12}{\elt}
\put(390,18){\line(1,1){38}}
\mp(409,37)(19,19){2}{\elt}
\put(428,37){\elt}
\put(393,36){\line(0,1){144}}
\mp(393,36)(0,18){9}{\elt}
\put(393,36){\line(1,1){36}}
\mp(393,36)(18,18){3}{\elt}
\put(396,54){\line(0,1){90}}
\mp(396,54)(0,18){6}{\elt}
\put(396,54){\line(1,1){34}}
\mp(396,54)(17,17){3}{\elt}
\put(399,72){\line(0,1){36}}
\mp(399,72)(0,18){3}{\elt}
\put(399,72){\line(1,1){32}}
\mp(399,72)(16,16){3}{\elt}
\put(320,72){\line(3,1){57}}
\put(377,91){\elt}
\put(376,91){\line(4,1){56}}
\put(474,36){\line(0,1){234}}
\mp(474,36)(0,18){14}{\elt}
\put(474,270){\line(-1,-1){38}}
\mp(474,270)(-19,-19){3}{\elt}
\put(471,54){\line(0,1){180}}
\mp(471,54)(0,18){11}{\elt}
\put(471,234){\line(-1,-1){36}}
\mp(471,234)(-18,-18){3}{\elt}
\put(468,72){\line(0,1){126}}
\mp(468,72)(0,18){8}{\elt}
\put(468,198){\line(-1,-1){34}}
\mp(468,198)(-17,-17){3}{\elt}
\put(465,90){\line(0,1){72}}
\mp(465,90)(0,18){5}{\elt}
\put(465,162){\line(-1,-1){32}}
\mp(465,162)(-16,-16){3}{\elt}
\put(462,108){\line(0,1){18}}
\mp(462,108)(0,18){2}{\elt}
\put(462,126){\line(-1,-1){15}}
\put(447,111){\elt}
\put(-2,18){\line(1,0){479}}
\put(-2,5){$2$}
\put(34,5){$4$}
\put(106,5){$8$}
\put(176,5){$12$}
\put(248,5){$16$}
\put(319,5){$20$}
\put(391,5){$24$}
\put(463,5){$28$}
\end{picture}
\end{center}
\end{diag}
\end{minipage}

\bigskip
\medskip
\begin{minipage}{6.5in}
\begin{diag}\label{P1diag2}{$\pi_*(X\w\tmf\w \Sigma P_1)$, $32\le*<48$}
\begin{center}
\begin{picture}(400,470)(80,0)
\def\mp{\multiput}
\def\elt{\circle*{3}}
\def\cir{\circle{3}}
\put(103,20){\line(0,1){300}}
\mp(103,20)(0,20){16}{\elt}
\put(103,20){\line(1,1){42}}
\mp(103,20)(21,21){3}{\elt}
\put(106,40){\line(0,1){240}}
\mp(106,40)(0,20){13}{\elt}
\put(106,40){\line(1,1){40}}
\mp(106,40)(20,20){3}{\elt}
\put(109,60){\line(0,1){180}}
\mp(109,60)(0,20){10}{\elt}
\put(109,60){\line(1,1){38}}
\mp(109,60)(19,19){3}{\elt}
\put(112,80){\line(0,1){120}}
\mp(112,80)(0,20){7}{\elt}
\put(112,80){\line(1,1){36}}
\mp(112,80)(18,18){3}{\elt}
\put(115,100){\line(0,1){60}}
\mp(115,100)(0,20){4}{\elt}
\put(115,100){\line(1,1){18}}
\put(133,118){\elt}
\put(181,140){\line(0,1){40}}
\mp(181,140)(0,20){3}{\elt}
\put(181,180){\line(-1,-1){36}}
\mp(181,180)(-18,-18){3}{\elt}
\put(184,120){\line(0,1){100}}
\mp(184,120)(0,20){6}{\elt}
\put(184,220){\line(-1,-1){38}}
\mp(184,220)(-19,-19){3}{\elt}
\put(187,100){\line(0,1){160}}
\mp(187,100)(0,20){9}{\elt}
\put(187,260){\line(-1,-1){40}}
\mp(187,260)(-20,-20){3}{\elt}
\put(190,80){\line(0,1){220}}
\mp(190,80)(0,20){12}{\elt}
\put(190,300){\line(-1,-1){42}}
\mp(190,300)(-21,-21){3}{\elt}
\put(193,60){\line(0,1){280}}
\mp(193,60)(0,20){15}{\elt}
\put(193,340){\line(-1,-1){44}}
\mp(193,340)(-22,-22){3}{\elt}
\put(196,40){\line(0,1){340}}
\mp(196,40)(0,20){18}{\elt}
\put(196,380){\line(-1,-1){46}}
\mp(196,380)(-23,-23){3}{\elt}
\put(272,20){\line(0,1){380}}
\mp(272,20)(0,20){20}{\elt}
\put(272,20){\line(1,1){44}}
\mp(272,20)(22,22){3}{\elt}
\put(275,40){\line(0,1){320}}
\mp(275,40)(0,20){17}{\elt}
\put(275,40){\line(1,1){42}}
\mp(275,40)(21,21){3}{\elt}
\put(278,60){\line(0,1){260}}
\mp(278,60)(0,20){14}{\elt}
\put(278,60){\line(1,1){40}}
\mp(278,60)(20,20){3}{\elt}
\put(281,80){\line(0,1){200}}
\mp(281,80)(0,20){11}{\elt}
\put(281,80){\line(1,1){38}}
\mp(281,80)(19,19){3}{\elt}
\put(284,100){\line(0,1){140}}
\mp(284,100)(0,20){8}{\elt}
\put(284,100){\line(1,1){36}}
\mp(284,100)(18,18){3}{\elt}
\put(287,120){\line(0,1){80}}
\mp(287,120)(0,20){5}{\elt}
\put(287,120){\line(1,1){34}}
\mp(287,120)(17,17){3}{\elt}
\put(290,140){\line(0,1){20}}
\mp(290,140)(0,20){2}{\elt}
\put(290,140){\line(1,1){17}}
\mp(290,140)(17,17){2}{\elt}
\mp(199,140)(57,19){2}{\elt}
\put(199,140){\line(3,1){57}}
\mp(215,141)(54,18){2}{\elt}
\put(215,141){\line(3,1){54}}
\put(234,140){\elt}
\put(234,140){\line(4,1){72}}
\mp(145,40)(171,0){2}{\elt}
\put(321,183){\line(1,1){36}}
\mp(321,182)(18,18){2}{\elt}
\put(357,160){\line(0,1){60}}
\mp(357,160)(0,20){4}{\elt}
\put(354,180){\elt}
\mp(322,222)(19,19){2}{\elt}
\put(322,222){\line(1,1){38}}
\put(360,140){\line(0,1){120}}
\mp(360,140)(0,20){7}{\elt}
\mp(323,260)(20,20){2}{\elt}
\put(323,260){\line(1,1){40}}
\put(363,120){\line(0,1){180}}
\mp(363,120)(0,20){10}{\elt}
\mp(324,298)(21,21){2}{\elt}
\put(324,298){\line(1,1){42}}
\put(366,100){\line(0,1){240}}
\mp(366,100)(0,20){13}{\elt}
\mp(325,336)(22,22){2}{\elt}
\put(325,336){\line(1,1){44}}
\put(369,80){\line(0,1){300}}
\mp(369,80)(0,20){16}{\elt}
\mp(326,378)(23,23){2}{\elt}
\put(326,378){\line(1,1){46}}
\put(372,60){\line(0,1){364}}
\mp(372,60)(0,20){17}{\elt}
\mp(372,380)(0,22){3}{\elt}
\mp(327,418)(24,24){2}{\elt}
\put(327,418){\line(1,1){48}}
\put(375,40){\line(0,1){428}}
\mp(375,40)(0,20){18}{\elt}
\mp(375,380)(0,22){5}{\elt}
\put(100,20){\line(1,0){278}}
\put(102,6){$32$}
\put(186,6){$36$}
\put(273,6){$40$}
\put(363,6){$44$}
\end{picture}
\end{center}
\end{diag}
\end{minipage}

The remainder of this section is devoted to the proof of Theorem \ref{P1thm}. By Theorem \ref{GMthm}, there is an
exact sequence
\begin{equation}\label{PCseq}bo_*(P_1)\mapright{g_*} \pi_*(C\w P_1)\mapright{\delta_*} \pi_*(X\w\tmf\w\Sigma P_1)\mapright{\ft_*} bo_*(\Sigma P_1).\end{equation}
As is well-known, $bo_*(P_1)$ can be computed from $\ext_{A_1}(H^*(P_1))$, and from \ref{GMthm}(d), $\pi_*(C\w P_1)$ can be
computed from \begin{equation}\label{extPC}\ext_{A_2}(\Sigma^4 A_2/(\sq^4,\sq^{5,1})\otimes H^*(P_1)).\end{equation}
We can use Bruner's software to compute (\ref{extPC})
through a large range of dimensions, enough to see patterns. In order to prove that these patterns continue, $v_2^8$-periodicity,
which follows from the resolution in the proof of \ref{extCthm}, is very helpful, but we still need to prove what happens in
filtration less than 8 and dimension greater than 48. Most of our analysis will go into computing (\ref{extPC}), but we begin by analyzing (\ref{PCseq}).

It is convenient to use (\ref{PCseq}) to form a chart for $\pi_*(X\w\tmf\w \Sigma P_1)$ from
$$\phi\ext_{A_2}(\Sigma^4 A_2/(\sq^4,\sq^{5,1})\otimes H^*(P_1))\oplus \ext_{A_1}(H^*(\Sigma P_1)).$$
Recall that $\phi$ increases filtration by 1. The behavior for $10\le i\le 18$ is typical, and is depicted in
Diagram \ref{PCdiag}, in which black dots are from $\ext_{A_1}(H^*(\Sigma P_1))$ and $\circ$'s are from
$\phi\ext_{A_2}(\Sigma^4 A_2/(\sq^4,\sq^{5,1})\otimes H^*(P_1))$.

\medskip
\begin{minipage}{6.5in}
\begin{diag}\label{PCdiag}{Forming $\pi_*(X\w\tmf\w \Sigma P_1)$, $10\le*<18$}
\begin{center}
\begin{picture}(400,170)(80,0)
\def\mp{\multiput}
\def\elt{\circle*{3}}
\def\cir{\circle{3}}
\put(160,20){\line(0,1){120}}
\mp(160,20)(0,20){7}{\elt}
\put(120,100){\line(1,1){40}}
\mp(120,100)(20,20){2}{\elt}
\mp(157,60)(0,20){3}{\cir}
\mp(157,61)(0,20){2}{\line(0,1){18}}
\mp(119,60)(19,20){2}{\cir}
\mp(119,61)(19,20){2}{\line(1,1){18}}
\mp(119,40)(19,0){2}{\cir}
\put(160,20){\line(-1,1){20}}
\put(240,20){\line(0,1){140}}
\mp(240,20)(0,20){8}{\elt}
\mp(243,40)(0,20){5}{\cir}
\mp(243,41)(0,20){4}{\line(0,1){18}}
\mp(263,40)(20,20){2}{\cir}
\put(264,41){\line(1,1){18}}
\mp(246,60)(0,20){2}{\cir}
\put(246,61){\line(0,1){18}}
\mp(262,60)(20,20){2}{\cir}
\put(263,61){\line(1,1){18}}
\put(244,40){\line(1,1){19}}
\put(100,20){\line(1,0){200}}
\put(241,20){\line(1,1){20}}
\put(155,5){$12$}
\put(236,5){$16$}
\end{picture}
\end{center}
\end{diag}
\end{minipage}

The content in this chart is the $d_1$-differential from $(12,0)$ and the $\eta$-extension from $(16,0)$.
These are generalized and proved in Theorem \ref{XPCthm}.
\begin{thm} \label{XPCthm} In (\ref{PCseq}),
\begin{itemize}
\item $bo_{8i+3}(P_1)\mapright{g_*}\pi_{8i+3}(C\w P_1)$
is nontrivial.
\item There is an element $\gamma_{8i}\in \pi_{8i}(X\w\tmf\w\Sigma P_1)$ such that $\ft_*(\gamma_{8i})$
has Adams filtration 0, and $\eta \gamma_{8i}=\delta_*(y_{8i})\ne0$ with $y_{8i}$ of Adams filtration 0
in $\pi_{8i+1}(C\w P_1)$.
\end{itemize}
\end{thm}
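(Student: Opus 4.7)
The plan is to analyze (\ref{PCseq}) by combining the three associated Adams spectral sequences with $v_2^8$-periodicity coming from the eight-term resolution in the proof of Theorem \ref{extCthm}, thereby reducing each assertion to a finite-range computation checkable in Bruner's software. The three relevant $E_2$-pages are $\ext_{A_1}(H^*(P_1))$ for $bo_*(P_1)$, $\ext_{A_2}(\Sigma^4A_2/(\sq^4,\sq^{5,1})\ot H^*(P_1))$ for $\pi_*(C\w P_1)$ (via change-of-rings and \ref{GMthm}(d)), and $\ext_{A_1}(H^*(\Sigma P_1))$ for $bo_*(\Sigma P_1)$. Since the middle resolution is $56$-periodic in internal degree while the outer two are $8$-periodic in stem, each claim reduces to a statement about the residue class of $i$ modulo a fixed finite bound.

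For part 1, I would identify the filtration-$0$ generator $\a_{8i+3}\in bo_{8i+3}(P_1)$ with the standard $v_1$-periodic $J$-type class in $\ext_{A_1}^{0,8i+3}(H^*(P_1))$ coming from the known $A_1$-module splitting of $H^*(P_1)$. The cofiber map $bo\to C$ dualizes in cohomology to a morphism sending the bottom class of $H^*(C)$ to $\sq^4\iota_0\in H^*(bo)$, so the induced morphism on Ext is readable directly off the resolution of Theorem \ref{extCthm}. A Bruner computation in the base range $i<8$ shows that the image $g_*(\a_{8i+3})$ hits a specific nonzero class in $\pi_{8i+3}(C\w P_1)$, and $v_2^8$-periodicity propagates the nonvanishing to all $i\ge1$.

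For part 2, define $\gamma_{8i}$ by lifting through $\ft_*$ the filtration-$0$ generator $\beta_{8i}\in bo_{8i}(\Sigma P_1)=bo_{8i-1}(P_1)$; the lift exists because the connecting obstruction in $\pi_{8i-1}(C\w P_1)$ vanishes in the relevant residue class, as read off the finite-range chart. Since $\eta$ annihilates $\beta_{8i}$ in $bo_*(P_1)$ (a standard feature of the $J$-theoretic structure in these dimensions), $\ft_*(\eta\gamma_{8i})=0$, so $\eta\gamma_{8i}=\delta_*(y_{8i})$ for some $y_{8i}\in\pi_{8i+1}(C\w P_1)$. To pin $y_{8i}$ down to filtration $0$, I would invoke $h_1$-naturality in the combined chart together with matching against the computer-verified portion of Diagram \ref{PCdiag}, identifying $y_{8i}$ with the filtration-$0$ generator of $\ext_{A_2}^{0,8i+5}(\Sigma^4A_2/(\sq^4,\sq^{5,1})\ot H^*(P_1))$.

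The main obstacle will be the $\eta$-extension in part 2: \textit{a priori} the element $y_{8i}$ could lie in strictly positive Adams filtration, which would not give the stated generator. Ruling this out requires a filtration-budget argument showing that no class in $\im(g_*)$ in the ambient stem can absorb $\eta\gamma_{8i}$ and that the potential higher-filtration covers are empty. I would settle this via the explicit structure of $\ext_{A_2}(\Sigma^4A_2/(\sq^4,\sq^{5,1})\ot H^*(P_1))$ across one period of residue classes using Bruner's software, then invoke $v_2^8$-periodicity from the resolution of Theorem \ref{extCthm} to extend to all $i\ge1$.
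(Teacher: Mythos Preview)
Your proposal would work, but it takes a substantially more circuitous route than the paper and misses the direct argument that makes both claims immediate.

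The paper does not reduce to finite-range computer checks plus $v_2^8$-periodicity. Instead it computes the morphism $g^*$ explicitly at the level of cohomology: $g^*(\iota_4)=\sq^4$, so $(g^*\ot 1)(\iota_4\ot x_j)=\sq^4\ot x_j$ in $A/\!/A_1\ot H^*P_1$. Under the shearing isomorphism $A/\!/A_1\ot H^*P_1\approx A\ot_{A_1}H^*P_1$ this becomes $\sq^4(1\ot x_j)$ plus a correction term $1\ot\sq^4 x_j$ rewritten over $A_1$. Taking $j=8i-1$ gives $\sq^4(1\ot x_{8i-1})+1\ot x_{8i+3}$, and the second summand is exactly the filtration-$0$ generator dual to $x_{8i+3}$, proving part~1 uniformly for all $i$. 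Taking $j=8i-3$ gives $\sq^4(1\ot x_{8i-3})+\sq^2(1\ot x_{8i-1})$, and the $\sq^2$-term is precisely the $h_1$-extension from the filtration-$0$ $bo$-class $x_{8i-1}$ to the filtration-$0$ $C$-class $\iota_4\ot x_{8i-3}$, proving the $\eta$-extension of part~2. Your ``main obstacle'' --- pinning $y_{8i}$ down to filtration $0$ --- simply evaporates: the cohomology computation \emph{is} the filtration-$0$ statement. No periodicity, no filtration-budget argument, and no Bruner verification of base cases is needed for either part.

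There is also a point you gloss over. You assert that the lift $\gamma_{8i}$ exists because the connecting obstruction vanishes ``as read off the finite-range chart,'' but the paper still has to argue that $\gamma_{8i}$ survives in the combined spectral sequence. For most residues this follows from $h_2$-naturality, but for $i\equiv 5\pmod 8$ there is a potential target that $h_2$-naturality alone does not exclude; the paper passes to the cofiber $R$ of the Kahn-Priddy map $\lambda:P_1\to S^0$ and rules it out there. Your proposal does not anticipate this exceptional case.
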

\begin{proof} We will see in \ref{PCthm} that
$\ext_A^{0,8i+3}(H^*(C\w P_1))\approx\zt$ with nonzero class $\iota_4\otimes x_{8i-1}$. The morphism $g_*$
is induced by
$$\begin{matrix}
\Sigma^4A/(\sq^4,\sq^{5,1})\otimes H^*P_1&\to &A/\!/A_1\otimes H^*P_1&\approx& A\otimes_{A_1}H^*P_1\\
\iota_4\otimes x_{8i-1}&\mapsto&\sq^4\otimes x_{8i-1}&\leftrightarrow&\sq^4(1\otimes x_{8i-1})+1\otimes x_{8i+3},
\end{matrix}$$
which proves the first statement. The $\eta$-extension follows similarly from
$$\iota_4\otimes x_{8i-3}\mapsto \sq^4\otimes x_{8i-3}\leftrightarrow \sq^4(1\otimes x_{8i-3})+\sq^2(1\otimes
x_{8i-1}).$$
To know that the class $\gamma_{8i}$ is nonzero in $\pi_{8i}(-)$, we use \ref{PCthm} to see  that, unless $i\equiv 5$ mod 8, the only
possible target of a differential from $\gamma_{8i}$ is ruled out by $h_2$-naturality. If $i\equiv5$ mod 8,
the differential, if nonzero in the ASS of $\Sigma P_1$, would have to also be nonzero in the ASS of the cofiber $R$ of the
Kahn-Priddy map $\lambda:P_1\to S^0$, but it is ruled out there by $h_2$-naturality.
\end{proof}

Let $\Ct=A_2/(\sq^4,\sq^{5,1})$. A good way to obtain $\ext_{A_2}(\Ct\otimes H^*(P_1))$ begins by computing
$\ext_{A_2}(\Ct\otimes Q)$, where $Q$ is the $A_2$-module which has as its only nonzero classes $x_i$ for
$i\ge1$ and $i\in\{-9,-5,-3,-2,-1\}$ with $\sq^jx_i=\binom ijx_{i+j}$. Then $Q$ is an extension of copies of $\Sigma^{8i-1}A_2/\!/A_1$
for $i\ge-1$. See \cite[p.299]{DM}. Thus there is a spectral sequence converging to $\ext_{A_2}(\Ct\otimes Q)$ with
$$E_2^{*,*}=\bigoplus_{i\ge-1}\ext_{A_1}^{*,*}(\Sigma^{8i-1}\Ct).$$
One easily computes $\ext_{A_1}(\Ct)$ to be as in Diagram \ref{Ctdiag}, from which it is immediate
that the spectral sequence collapses and
\begin{equation}\label{Q}\ext_{A_2}(\Ct\otimes Q)\approx\bigoplus_{i\ge-1}\ext_{A_1}(\Sigma^{8i-1}\Ct).\end{equation}
We obtain that, in grading $8i+5$, $\ext_{A_2}(\Ct\otimes Q)$ has a $bo_*$ beginning in filtration $s$ for all nonnegative $s\le 4i+1$
except $s=4i$. This will explain the low-filtration form of Diagrams \ref{P1diag1} and \ref{P1diag2}.

\medskip
\begin{minipage}{6.5in}
\begin{diag}\label{Ctdiag}{$\ext_{A_1}(\Ct)$}
\begin{center}
\begin{picture}(400,185)(40,0)
\def\mp{\multiput}
\def\elt{\circle*{3}}
\put(0,20){\elt}
\put(140,20){\elt}
\mp(120,20)(20,20){2}{\elt}
\put(120,20){\line(1,1){20}}
\put(100,40){\vector(0,1){140}}
\mp(100,40)(0,20){5}{\elt}
\put(100,40){\line(1,1){40}}
\mp(120,60)(20,20){2}{\elt}
\put(180,40){\vector(0,1){140}}
\mp(180,40)(0,20){6}{\elt}
\put(183,60){\vector(0,1){120}}
\mp(183,60)(0,20){5}{\elt}
\put(186,100){\vector(0,1){80}}
\mp(186,100)(0,20){3}{\elt}
\put(262,20){\vector(0,1){160}}
\mp(262,20)(0,20){8}{\elt}
\put(262,20){\line(1,1){40}}
\mp(282,40)(20,20){2}{\elt}
\put(265,60){\vector(0,1){120}}
\mp(265,60)(0,20){6}{\elt}
\put(265,60){\line(1,1){38}}
\mp(265,60)(19,19){3}{\elt}
\put(268,80){\vector(0,1){100}}
\mp(268,80)(0,20){5}{\elt}
\put(268,80){\line(1,1){36}}
\mp(268,80)(18,18){3}{\elt}
\put(271,120){\vector(0,1){60}}
\mp(271,120)(0,20){3}{\elt}
\put(271,120){\line(1,1){34}}
\mp(271,120)(17,17){3}{\elt}
\mp(320,130)(8,8){3}{$\cdot$}
\put(-5,20){\line(1,0){320}}
\put(-2,5){$0$}
\put(97,5){$5$}
\put(180,5){$9$}
\put(261,5){$13$}
\end{picture}
\end{center}
\end{diag}
\end{minipage}

There is a short exact sequence of $A_2$-modules
$$0\to H^*P_1\to Q\to \Sigma^{-9}M_7\oplus\Sigma^{-1}\zt\to0,$$
and also after tensoring with $\Ct$. Thus there is an exact sequence
\begin{eqnarray}\label{QLES}&&\ext_{A_2}^s(\Ct\ot\Sigma^{-9}M_7)\oplus\ext_{A_2}^s(\Sigma^{-1}\Ct)\to\ext_{A_2}^s(\Ct\ot Q)\\
&\to&
\ext_{A_2}^s(\Ct\ot H^*P_1)\to\ext_{A_2}^{s+1}(\Ct\ot\Sigma^{-9}M_7)\oplus\ext_{A_2}^{s+1}(\Sigma^{-1}\Ct).\nonumber
\end{eqnarray}

In Theorem \ref{extCthm} and Diagram \ref{bigdiag}, we computed and displayed $\ext_{A_2}(\Ct)$.
A nice computation
of $\ext_{A_2}(\Ct\otimes M_7)$ can be obtained by tensoring the exact sequence at the beginning of the proof of \ref{extCthm}
with $M_7$. This yields a spectral sequence computing $\ext_{A_2}(\Ct\otimes M_7)$ from things such as
$\ext_{A_2}(M_7\otimes A_2)$, which is just four $\zt$'s, and $\ext_{A_2}(M_7\otimes A_2/\!/A_1)\approx\ext_{A_1}(M_7)$,
which is $bo_*\oplus\Sigma^4 bsp_*$. The resulting spectral sequence has only a very few possible differentials, which are
most easily settled using Bruner's software, although they can be settled without it. Both $\ext_{A_2}(\Ct)$ and $\ext_{A_2}(\Ct\ot M_7)$
have lower vanishing lines.
From these and the exact sequence, we obtain that
$$\ext_{A_2}^{s,t}(\Ct\ot Q)\to\ext_{A_2}^{s,t}(\Ct\ot H^*P_1)$$
is an isomorphism if $s\le8$ and $t-s\ge53$.

Thus a Bruner calculation of $\ext_{A_2}^{s,t}(\Ct\ot H^*P_1)$ for $t-s\le53$, which is easily done and is
consistent with Theorem \ref{PCthm}, together with the complete description of $\ext_{A_2}(\Ct\ot Q)$ in
(\ref{Q}) and \ref{Ctdiag} and $v_2^8$-periodicity, gives a complete determination of the groups
$\ext_{A_2}^{s,t}(\Ct\ot H^*P_1)$.
Note that the Bruner software is not absolutely necessary for this. First of all, it is just a finite calculation,
and secondly there are rather simple patterns for the boundary homomorphism in (\ref{QLES}), which could be determined
directly.

There is one more thing required in order to determine the chart for $\ext_{A_2}^{s,t}(\Ct\ot H^*P_1)$,
and the resulting $\pi_*(C\w P_1)$. In dimensions greater than 53 and congruent to 0 mod 4,
we know from the determination of $\ext_{A_2}(\Ct\ot Q)$
that in filtration $\le8$ $\ext_{A_2}^{s,t}(\Ct\ot H^*P_1)$ has $h_0$-towers beginning in each filtration ($>0$ in
dimension 0 mod 8),
and we know from the Bruner calculation and periodicity that in high filtration it has towers which end in every second filtration
coming down from a certain maximum filtration. But how do we know the way these match up? We must show that, as suggested
in Diagrams \ref{P1diag1} and \ref{P1diag2}, the lowest bottoms match up with the highest tops.

One way to do this is to use the spectral sequence which builds $\ext_{A_2}(\Ct\ot H^*P_1)$ from
\begin{equation}\label{phisum}\bigoplus_{s\ge0}\phi^s\ext_{A_2}^{*,*}(\Sigma^{-s}C_s\ot H^*P_1),\end{equation}
where $C_s$ are the $A_2$-modules in the resolution of $\Ct$ at the beginning of the proof of \ref{extCthm}.
The $s$-summand provides a bunch of $\zt$'s at height $s$ in the resulting chart (coming from $\phi^s\ext^0(-)$)
together with the portion of Diagram \ref{Pschart} consisting of towers beginning at height $s$.
Note that there are no such towers when $s=0$.

\medskip
\begin{minipage}{6.5in}
\begin{diag}\label{Pschart}{Portion of spectral sequence building $\ext_{A_2}(\Ct\ot H^*P_1)$}
\begin{center}
\begin{picture}(450,255)(30,0)
\def\mp{\multiput}
\def\elt{\circle*{3}}
\put(0,30){\line(1,1){30}}
\mp(0,30)(15,15){3}{\elt}
\put(30,30){\line(0,1){30}}
\mp(30,30)(0,15){2}{\elt}
\put(90,30){\line(0,1){45}}
\mp(90,30)(0,15){4}{\elt}
\put(87,45){\elt}
\mp(150,30)(0,15){7}{\elt}
\put(150,30){\line(0,1){90}}
\put(120,90){\line(1,1){30}}
\mp(120,90)(15,15){2}{\elt}
\put(147,45){\line(0,1){45}}
\mp(147,45)(0,15){4}{\elt}
\put(119,62){\line(1,1){28}}
\mp(119,62)(14,14){2}{\elt}
\put(212,30){\line(0,1){105}}
\mp(212,30)(0,15){8}{\elt}
\put(209,45){\line(0,1){60}}
\mp(209,45)(0,15){5}{\elt}
\put(206,60){\line(0,1){15}}
\mp(206,60)(0,15){2}{\elt}
\put(275,30){\line(0,1){150}}
\mp(275,30)(0,15){11}{\elt}
\put(245,150){\line(1,1){30}}
\mp(245,150)(15,15){2}{\elt}
\put(272,45){\line(0,1){105}}
\mp(272,45)(0,15){8}{\elt}
\put(242,122){\line(1,1){28}}
\mp(242,122)(14,14){2}{\elt}
\put(269,60){\line(0,1){60}}
\mp(269,60)(0,15){5}{\elt}
\put(269,120){\line(-1,-1){26}}
\mp(269,120)(-13,-13){3}{\elt}
\put(266,75){\line(0,1){15}}
\mp(266,75)(0,15){2}{\elt}
\put(266,90){\line(-1,-1){13}}
\mp(266,90)(-13,-13){2}{\elt}
\put(335,30){\line(0,1){165}}
\mp(335,30)(0,15){12}{\elt}
\put(332,45){\line(0,1){120}}
\mp(332,45)(0,15){9}{\elt}
\put(329,60){\line(0,1){75}}
\mp(329,60)(0,15){6}{\elt}
\put(326,75){\line(0,1){30}}
\mp(326,75)(0,15){3}{\elt}
\put(396,30){\line(0,1){210}}
\mp(396,30)(0,15){15}{\elt}
\put(396,240){\line(-1,-1){32}}
\mp(396,240)(-16,-16){3}{\elt}
\put(393,45){\line(0,1){165}}
\mp(393,45)(0,15){12}{\elt}
\put(393,210){\line(-1,-1){30}}
\mp(393,210)(-15,-15){3}{\elt}
\put(390,60){\line(0,1){120}}
\mp(390,60)(0,15){9}{\elt}
\put(390,180){\line(-1,-1){28}}
\mp(390,180)(-14,-14){3}{\elt}
\put(387,75){\line(0,1){75}}
\mp(387,75)(0,15){6}{\elt}
\put(387,150){\line(-1,-1){26}}
\mp(387,150)(-13,-13){3}{\elt}
\put(384,90){\line(0,1){30}}
\mp(384,90)(0,15){3}{\elt}
\put(384,120){\line(-1,-1){24}}
\mp(384,120)(-12,-12){3}{\elt}
\put(456,30){\line(0,1){225}}
\mp(456,30)(0,15){16}{\elt}
\put(453,45){\line(0,1){180}}
\mp(453,45)(0,15){13}{\elt}
\put(450,60){\line(0,1){135}}
\mp(450,60)(0,15){10}{\elt}
\put(447,75){\line(0,1){90}}
\mp(447,75)(0,15){7}{\elt}
\put(444,90){\line(0,1){45}}
\mp(444,90)(0,15){4}{\elt}
\put(441,90){\line(0,1){15}}
\mp(441,90)(0,15){2}{\elt}
\put(-5,15){\line(1,0){465}}
\put(-2,3){$6$}
\put(28,3){$8$}
\put(86,3){$12$}
\put(146,3){$16$}
\put(204,3){$20$}
\put(264,3){$24$}
\put(324,3){$28$}
\put(384,3){$32$}
\put(444,3){$36$}
\end{picture}
\end{center}
\end{diag}
\end{minipage}

\medskip
The desired form for the bottoms of the towers, as obtained from the complete description of $\ext_{A_2}(\Ct\ot Q)$ in
(\ref{Q}) and \ref{Ctdiag}, differs slightly from this, in that in dimensions congruent to 4 mod 8 most of the towers
should begin one filtration lower. This can only be accounted for by an extension from a $\zt$ from the next smaller $s$-value.

For example, in dimension 28, Diagram \ref{Pschart} shows towers beginning at height 1, 2, 3, and 4, coming from
summands $s=1$, 2, 3, and 4 in (\ref{phisum}) with tops at height 12, 10, 8, and 6, respectively. These
correspond to $\pi_{32}(C\w P_1)$, which, according to Theorem \ref{PCthm}, corresponds to $\pi_{32}(X\w\tmf\w \Sigma P_1)$ in
Diagram \ref{P1diag2} with its largest tower removed and filtrations decreased by 1; hence, towers beginning at height
0, 1, 2, and 3 ending at height 12, 10, 8, and 6. Then, for example, the tower in $\pi_{32}(C\w P_1)$ (corresponding to
$\ext_{A_2}^{*,*+28}(\Ct\ot H^*P_1)$) going from filtration 0 to 12 can only come, in the spectral sequence of
(\ref{phisum}), from the $s=1$ tower with an extension from a $\zt$ from $s=0$.

The main thing that was obtained from using $Q$ which was not easily obtained from (\ref{phisum}) is the
$\eta^2$-hooks on the bottom of towers. In (\ref{phisum}) these come about from the filtration-0 $\zt$'s
in the various $s$-summands in a complicated way, but they are clear in Diagram \ref{Ctdiag}.
The above remarks imply the following result, the computation of (\ref{extPC}), since there are no possible differentials
in the ASS.
\begin{thm}\label{PCthm} The ASS converging to $\pi_*(C\w P_1)$ has $E_2^{s,t}=\ext_{A_2}^{s,t}(\Sigma^4\Ct\ot H^*(P_1))$
and collapses. The description of $\pi_*(C\w P_1)$ can be obtained from that of $\pi_*(X\w\tmf \w\Sigma P_1)$ in Theorem \ref{P1thm}
by making the following changes:
\begin{itemize}
\item Remove summands in (\ref{cas}) for which $e_2=0$, (but do not remove $\eta a^{e_1}$ and $\eta^2 a^{e_1}$ when $e_1\equiv0$ mod 4);
\item Remove $b_{e_1,0}$ and $\eta b_{e_1,0}$ with $e_1\equiv2$ mod 4;
\item Add elements $c_{8i+3,0}$ of order 2 for $i\ge1$;
\item Decrease filtrations by 1.
\end{itemize}
\end{thm}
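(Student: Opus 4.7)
The plan is to identify the $E_2$-term via change-of-rings and then extract the answer from the Ext computation already developed in this section. By Theorem \ref{GMthm}(d), $H^*(C)\approx\Sigma^4 A\otimes_{A_2}\Ct$, so the change-of-rings theorem gives
$$E_2^{s,t}(C\w P_1)\approx\ext_{A_2}^{s,t}(\Sigma^4\Ct\otimes H^*(P_1)),$$
the Ext group under analysis throughout this section. This establishes the first half of the theorem.

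For the computation itself, the strategy is to combine two pieces of work already in the text. First, the short exact sequence $0\to H^*P_1\to Q\to\Sigma^{-9}M_7\oplus\Sigma^{-1}\zt\to 0$ induces (\ref{QLES}); together with the complete determination of $\ext_{A_2}(\Ct\otimes Q)$ supplied by (\ref{Q}) and Diagram \ref{Ctdiag}, this yields all the bottoms of $h_0$-towers as well as the $\eta^2$-hooks. Second, a finite Bruner computation through $t-s\le 53$ combined with $v_2^8$-periodicity (from the eight-periodic resolution in the proof of Theorem \ref{extCthm}) pins down the high-filtration structure and, via the auxiliary spectral sequence built from (\ref{phisum}), resolves the matching of tower bottoms to tower tops --- the only subtle case being dimensions $\equiv 4\ (8)$, where one forced extension from a filtration-zero $\zt$ in the $s{-}1$ summand pulls each relevant tower down by one filtration, exactly as discussed immediately preceding the theorem.

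Collapse of the ASS is by inspection: the resulting $E_2$-chart contains no pair of classes in bigradings $(t,s)$ and $(t-1,s+r)$ that could support a nonzero $h_i$-natural differential, by the same kind of reasoning applied to Diagram \ref{bigdiag} in the proof of Theorem \ref{piXtmf}.

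Finally, reconciling the outcome with the picture in Theorem \ref{P1thm} is a stem-by-stem application of the exact sequence (\ref{PCseq}). The map $\ft_*$ surjects onto precisely the $bo_*(\Sigma P_1)$-part of $\pi_*(X\w\tmf\w\Sigma P_1)$, accounting for the removed $a^{e_1}v_2^0$ generators and, for $e_1\equiv 2\ (4)$, the $b_{e_1,0}$'s together with their $\eta$-multiples; the $\eta$- and $\eta^2$-multiples of $a^{e_1}$ for $e_1\equiv 0\ (4)$ are \emph{not} in the image of $\ft_*$ and so survive as classes in $\pi_*(C\w P_1)$. The new $\zt$ classes $c_{8i+3,0}$ are precisely $\im(g_*)$ in dimension $8i+3$, by Theorem \ref{XPCthm}. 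The uniform filtration drop by one encodes the fact that the connecting map $C\w P_1\to \Sigma X\w\tmf\w P_1=X\w\tmf\w\Sigma P_1$ raises Adams filtration by one, matching the $\phi$ of the chart convention introduced after Theorem \ref{piXtmf}. The main obstacle will be the bookkeeping of this last step --- verifying case by case that every tower, every $\eta$- and $\eta^2$-hook, and every hidden extension in Theorem \ref{P1thm} is correctly sourced in $\pi_*(C\w P_1)$ --- but no new homological algebra is required beyond naturality of (\ref{PCseq}) under multiplication by $\eta$ and $\nu$.
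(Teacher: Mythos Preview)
Your proposal is correct and follows essentially the same route as the paper: change-of-rings for the $E_2$-identification, the $Q$-sequence (\ref{QLES}) together with (\ref{Q}) and Diagram \ref{Ctdiag} for the tower bottoms and $\eta$-hooks, Bruner plus $v_2^8$-periodicity for the rest, the spectral sequence (\ref{phisum}) to match bottoms with tops, and collapse by $h_i$-naturality. One small point of order worth making explicit (the paper flags it in the Remark immediately after the theorem): logically the Ext computation here comes \emph{first} and Theorem \ref{P1thm} is derived from it via (\ref{PCseq}) and Theorem \ref{XPCthm}, so your ``reconciliation'' paragraph is really verifying that the two descriptions agree rather than deducing one from the other. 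Also, your phrase ``$\ft_*$ surjects onto precisely the $bo_*(\Sigma P_1)$-part'' is slightly loose --- $\ft_*$ is not surjective in degrees $8i+3$ (that is exactly what produces the $c_{8i+3,0}$ classes via $g_*$) --- but your intended meaning, that the $e_2=0$ towers are what $\ft_*$ detects, is right.
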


 The proof of Theorem \ref{P1thm} is now immediate from the exact sequence (\ref{PCseq}), Theorem \ref{PCthm}, and Theorem
 \ref{XPCthm}, which describes the only possible differentials and extensions in (\ref{PCseq}).

\begin{rmk} {\rm The way that we have chosen to describe these things is reversed from the way they are derived.
We first compute the groups in \ref{PCthm} and then use them to determine the groups in \ref{P1thm}. However, we are mostly
interested in \ref{P1thm}, and so we felt that it should be stated up front. It seemed like overkill to state the whole
thing again for $\pi_*(C\w P_1)$, since it is so similar.}\end{rmk}

\def\line{\rule{.6in}{.6pt}}


\begin{thebibliography}{99}
\bibitem{Brun} R.R.Bruner, {\em A new differential in the Adams spectral sequence}, Topology {\bf 23} (1984) 271-276.
\bibitem{Br} \line, {\em Ext in the nineties}, Contemp Math, Amer Math Soc, {\bf 146} (1993) 71-90.
\bibitem{thesis} D.M.Davis, {\em Generalized homology and the generalized vector field problem}, Quar Jour Math Oxford {\bf 25} (1974) 169-193.
\bibitem{DM} D.M.Davis and M.Mahowald, {\em Ext over the subalgebra $A_2$ of the Steenrod algebra for stunted projective spaces}, Canadian Math
Society Conf Proc {\bf 2} (1982) 297-342.
\bibitem{GJM} P.G.Goerss, J.D.S.Jones, and M.Mahowald, {\em Some generalized Brown-Gitler spectra}, Trans Amer Math Soc {\bf 294} (1986) 113-132.
\bibitem{GM} V.Gorbounov and M.Mahowald, {\em Some homotopy of the cobordism spectrum $MO\langle8\rangle$}, Contemp Math Amer Math Soc {\bf 188} (1995) 105-119.
\bibitem{bo} M.Mahowald, {\em $bo$-resolutions}, Pacific Jour Math {\bf 92} (1982) 365-383.
\bibitem{MR} M.Mahowald and C.Rezk, {\em Topological modular forms of level 3}, Pure Appl Math Quar {\bf 5} (2009) 853-872.
\end{thebibliography}
\end{document}